\definecolor{LightBlue}{rgb}{0.392,0.392,1} 
\definecolor{Green}{rgb}{0,0.922,0}
\definecolor{DarkGreen}{rgb}{0,0.5,0}
\definecolor{MildGreen}{rgb}{0,0.784,0}
\definecolor{NormalGreen}{rgb}{0,0.8,0}
\definecolor{LightGreen}{rgb}{0,0.922,0}
\definecolor{Magenta}{rgb}{1,0,0.6}
\definecolor{Yellow}{rgb}{0.95,0.95,0}
\definecolor{lavender}{rgb}{0.4,0,1}
\crefname{conjecture}{Conjecture}{Conjectures}
\newtheorem{theorem}{Theorem}[section]
\newtheorem{lemma}[theorem]{Lemma}
\theoremstyle{definition}
\newtheorem{remark}[theorem]{Remark}
\newtheorem{example}[theorem]{Example}
\newcommand{\includeSymbol}[1]{\ensuremath{%
	\mathchoice
		{\raisebox{-.4mm}{\includegraphics[height=2.1ex]{#1}}}	
		{\raisebox{-.4mm}{\includegraphics[height=2.1ex]{#1}}}
		{\raisebox{-.3mm}{\includegraphics[height=1.6ex]{#1}}}
		{\raisebox{-.2mm}{\includegraphics[height=1ex]{#1}}}
}}
\newcommand{\Elbows}{\includeSymbol{Elbows}}
\newcommand{\Cross}{\includeSymbol{Cross}}
\newcommand{\SEElbow}{\includeSymbol{SEElbow}}
\newcommand{\NWElbow}{\includeSymbol{NWElbow}}
\newcommand{\bT}{\mathscr{T}}
\newcommand{\qq}{\mathfrak{q}}
\newcommand{\LL}{\mathfrak{L}}
\newcommand{\NN}{\mathfrak{N}}
\newcommand{\MM}{\mathfrak{M}}
\newcommand{\TT}{\mathrm{T}}
\newcommand{\PD}{\mathrm{PD}}
\newcommand{\XX}{\mathrm{X}}
\newcommand{\ff}{\mathrm{F}}
\newcommand{\pp}{\mathfrak{p}}
\newcommand{\N}{\mathrm{N}}
\newcommand{\E}{\mathrm{E}}
\newcommand{\sub}{\mathsf{sub}}
\newcommand{\inv}{\mathrm{inv}}
\newcommand{\ww}{\mathsf{w}}
\newcommand{\bS}{\mathscr{S}}
\newcommand{\SSS}{\mathfrak{S}}
\newcommand{\fb}{\mathfrak{b}}
\newcommand{\bb}{\mathsf{b}}
\newcommand{\hh}{\mathbf{h}}
\newcommand{\dd}{\mathbf{d}}
\newcommand{\ZZ}{\mathbb{Z}}
\newcommand{\occ}{\mathfrak{O}}
\newcommand{\dfn}[1]{\textcolor{blue}{\emph{#1}}}
\begin{document}

\title[]{Random Subwords and Pipe Dreams}
\subjclass[2010]{}

\author[]{Colin Defant}
\address[]{Department of Mathematics, Harvard University, Cambridge, MA 02138, USA}
\email{colindefant@gmail.com}

\begin{abstract}
Fix a probability $p\in(0,1)$. Let $s_i$ denote the transposition in the symmetric group $\mathfrak{S}_n$ that swaps $i$ and $i+1$. Given a word $\mathsf{w}$ over the alphabet $\{s_1,\ldots,s_{n-1}\}$, we can generate a random subword by independently deleting each letter of $\mathsf{w}$ with probability $1-p$. For a large class of starting words $\mathsf{w}$---including all alternating reduced words for the decreasing permutation---we compute precise asymptotics (as $n\to\infty$) for the expected number of inversions of the permutation represented by the random subword. This result can also be seen as an asymptotic formula for the expected number of inversions of a permutation represented by a certain random (non-reduced) pipe dream. In the special case when $\mathsf{w}$ is the word $(s_{n-1})(s_{n-2}s_{n-1})\cdots(s_1s_2\cdots s_{n-1})$, we find that the expected number of inversions of the permutation represented by the random subword is asymptotically equal to
\[\frac{2\sqrt{2}}{3\sqrt{\pi}}\sqrt{\frac{p}{1-p}}\,n^{3/2};\] this settles a conjecture of Morales, Panova, Petrov, and Yeliussizov. 
\end{abstract} 

\maketitle

\section{Introduction}\label{sec:intro}

\subsection{Random Subwords} 
Let $\SSS_n$ denote the symmetric group whose elements are permutations of the set $[n]\coloneq\{1,\ldots,n\}$. For $1\leq i\leq n-1$, let $s_i$ be the adjacent transposition in $\SSS_n$ that swaps $i$ and $i+1$. When $n$ is understood, we refer to a finite word over the alphabet $\{s_1,\ldots,s_{n-1}\}$ simply as a \dfn{word}. Let $\occ_i(\ww)$ denote the number of occurrences of $s_i$ in the word $\ww$. We say $\mathsf{w}$ is \dfn{alternating} if for all $i\in[n-2]$, the letters $s_i$ and $s_{i+1}$ alternate within $\mathsf{w}$. An \dfn{inversion} of a permutation $w\in\SSS_n$ is a pair $(j,j')$ of integers such that $1\leq j<j'\leq n$ and $w^{-1}(j)>w^{-1}(j')$. The minimum length of a word representing $w$ is equal to the number of inversions of $w$, which we denote by $\inv(w)$. A \dfn{reduced word} for $w$ is a word of length $\inv(w)$ that represents $w$. 

Throughout this article, let us fix a probability $p\in(0,1)$ and a sufficiently small real number $\varepsilon>0$. Given a word $\ww$, let $\sub_p(\ww)$ be the random subword of $\ww$ obtained by independently deleting each letter of $\ww$ with probability $1-p$. Thus, if $\ww$ has length $K$, then $\sub_p(\ww)$ has expected length $pK$. The word $\sub_p(\ww)$ is likely not reduced, so it is natural to ask how much we can reduce it. In other words, we are interested in the expected number of inversions of the permutation represented by $\sub_p(\ww)$. Our main result gives precise asymptotics for this expected value (as $n\to\infty$) for a large class of alternating words. In what follows, we write $f(n)\ll g(n)$ (or $g(n)\gg f(n)$) to mean that $f(n)=O(g(n))$. For $\alpha,\beta>0$, we let $\log^\alpha\beta=(\log\beta)^{\alpha}$. 

\begin{theorem}\label{thm:main}
For each $n\geq 2$, let $\ww^{(n)}$ be an alternating word over the alphabet $\{s_1,\ldots,s_{n-1}\}$. Assume that \[\max_{i\in[n-1]}\occ_i(\ww^{(n)})=o\left( n^2/\log^{1+\varepsilon} n\right)\quad\text{and}\quad |\{i\in[n-1]:\occ_i(\ww^{(n)})\leq n^{2\varepsilon}\}|=o(n^{1-\varepsilon}).\]
Let $v^{(n)}$ be the random permutation represented by $\sub_p(\ww^{(n)})$. Then 
\[\mathbb E(\inv(v^{(n)}))=\left(\sqrt{2/\pi}+o(1)\right)\sqrt{\frac{p}{1-p}}\,\sum_{i=1}^{n-1}\sqrt{\occ_i(\ww^{(n)})}.\]
\end{theorem}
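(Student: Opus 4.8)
The plan is to track, for each column $i\in[n-1]$, the net number of ``productive'' uses of $s_i$ incurred when we build $v^{(n)}$ by reading $\sub_p(\ww^{(n)})$ from left to right and multiplying in $\SSS_n$. Represent $v^{(n)}$ by its wiring diagram: $n$ strands, strand $k$ starting at height $k$, each retained letter $s_i$ swapping the two strands currently at heights $i$ and $i+1$. Call such a swap a \emph{true crossing} if those strands are currently in increasing order (so $\inv$ of the running product goes up by $1$) and a \emph{false crossing} otherwise (it goes down by $1$). Since $\inv$ starts at $0$ and moves by $\pm1$ at each retained letter, setting $Y_i = (\#\text{true crossings at height } i) - (\#\text{false crossings at height } i)$ gives the identity
\[ \inv(v^{(n)}) = \sum_{i=1}^{n-1} Y_i. \]
Thus it suffices to establish $\mathbb E(Y_i) = (\sqrt{2/\pi}+o(1))\sqrt{p/(1-p)}\,\sqrt{\occ_i(\ww^{(n)})}$ with the $o(1)$ uniform over a suitable range of $i$, and then to bound the remaining columns.

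For the per-column estimate I would analyze the process obtained by scanning $\ww^{(n)}$ and recording the ordered pair of strands occupying heights $i$ and $i+1$. Writing $j_1<\dots<j_m$ ($m=\occ_i(\ww^{(n)})$) for the positions of $s_i$ in $\ww^{(n)}$, the alternating hypothesis guarantees exactly one $s_{i-1}$ and one $s_{i+1}$ strictly between $j_t$ and $j_{t+1}$; combined with independence of the deletions, this controls how that pair of strands is ``refreshed'' between consecutive occurrences of $s_i$, and lets one compare $Y_i$ with the terminal value (equivalently, for the purposes of the expectation, the running maximum) of a mean-zero random walk with $\asymp \occ_i(\ww^{(n)})$ steps whose variance at the end is $(1+o(1))\,\frac{p}{1-p}\,\occ_i(\ww^{(n)})$. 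The reflection principle, together with the fact that $|\mathcal N(0,1)|$ (equivalently the maximum of standard Brownian motion on $[0,1]$) has mean $\sqrt{2/\pi}$, then yields the desired per-column asymptotic, the constant $\sqrt{2/\pi}$ being exactly this half-normal mean. I expect this step to be the main obstacle: the pair of strands at heights $i,i+1$ genuinely feels a propagating front of randomness coming from columns $i\pm1,i\pm2,\dots$, and one must show that this front is tame enough to (i) identify the limiting walk and pin down the variance constant $p/(1-p)$, and (ii) make the error uniformly $o(\sqrt{\occ_i(\ww^{(n)})})$. This is where the hypothesis $\max_{i}\occ_i(\ww^{(n)})=o(n^2/\log^{1+\varepsilon}n)$ is used: the front must fit comfortably inside $[1,n]$ (forcing $\sqrt{\occ_i}\ll n$), and beating a union bound over the $\Theta(n)$ columns accounts for the logarithmic factor.

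Finally I would sum over $i$, splitting $[n-1]$ into ``small'' columns with $\occ_i(\ww^{(n)})\le n^{2\varepsilon}$ and the rest. For a small column, $Y_i$ is at most the number of retained $s_i$'s, hence $Y_i\le\occ_i(\ww^{(n)})\le n^{2\varepsilon}$, so by the hypothesis $|\{i:\occ_i(\ww^{(n)})\le n^{2\varepsilon}\}|=o(n^{1-\varepsilon})$ the small columns contribute only $o(n^{1+\varepsilon})$ to $\mathbb E(\inv(v^{(n)}))$. On the other hand there are $(1-o(1))n$ non-small columns, each with $\sqrt{\occ_i(\ww^{(n)})}>n^{\varepsilon}$, so $\sum_{i=1}^{n-1}\sqrt{\occ_i(\ww^{(n)})}=\Omega(n^{1+\varepsilon})$ dominates the small-column error. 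Applying the per-column asymptotic (with its now uniformly valid $o(1)$) to the non-small columns and adding everything up gives $\mathbb E(\inv(v^{(n)}))=(\sqrt{2/\pi}+o(1))\sqrt{p/(1-p)}\sum_{i=1}^{n-1}\sqrt{\occ_i(\ww^{(n)})}$, as claimed.
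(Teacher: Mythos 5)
Your identity $\inv(v^{(n)})=\sum_{i=1}^{n-1}Y_i$ is correct, and the bookkeeping at the ends of your argument is fine: the deterministic bound $|Y_i|\le\occ_i(\ww^{(n)})\le n^{2\varepsilon}$ on the small columns, together with the two hypotheses, does make their contribution $o(n^{1+\varepsilon})$ while the main term is $\gg n^{1+\varepsilon}$. The problem is that the heart of the proof---the per-column asymptotic $\mathbb E(Y_i)=(\sqrt{2/\pi}+o(1))\sqrt{p/(1-p)}\sqrt{\occ_i(\ww^{(n)})}$---is asserted rather than proved, and the heuristic you offer for it does not survive scrutiny as stated. First, $Y_i$ is a genuinely signed quantity that can be negative: for the fully retained alternating word $s_2s_1s_2s_1s_2s_1$ in $\SSS_3$ one computes $Y_1=-1$ and $Y_2=1$. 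So $Y_i$ is not the running maximum, nor the absolute terminal value, of any walk, and equating its \emph{expectation} with a half-normal mean is exactly the nontrivial claim---the reflection principle does not apply until you have exhibited a concrete walk and a concrete coupling. Second, you never identify the walk. The natural candidate, the evolution of the ordered pair of strands occupying heights $i$ and $i+1$, does not have independent or Markovian increments in any finite state space: whether a retained $s_i$ is a true or a false crossing depends on the labels of the two strands currently there, and those labels are determined by the full history of the process in columns $i\pm1,i\pm2,\dots$. You name this ``propagating front'' as the main obstacle but give no mechanism for taming it; that mechanism is the entire content of the theorem.

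For comparison, the paper decomposes over ordered pairs of pipes rather than over columns: it shows that two pipes starting at diagonal distance $\ell$, in a column with $m$ boxes, form an inversion with probability $(1+o(1))\bigl(1-\Phi_p(\ell(2m)^{-1/2})\bigr)$, and the constant $\sqrt{2/\pi}$ emerges from summing this Gaussian tail over $\ell$. For a \emph{fixed} pair of pipes the pair of directions they face really is a $4$-state Markov chain, so a Berry--Esseen theorem for Markov chains gives the first-meeting time; a separate parity argument shows that once they meet they cross an odd number of times with probability $\tfrac12+o(1)$; and a reduction to serrated shapes arranges for the two pipes to start on a common antidiagonal. Your column-based functional $Y_i$ aggregates over all pairs simultaneously and loses access to all three of these ingredients, so as written the proposal has a genuine gap at its central step.
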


\begin{example}\label{exam:staircase1}
Let $\ww^{(n)}$ be the \dfn{staircase reduced word} $(s_{n-1})(s_{n-2}s_{n-1})\cdots(s_1s_2\cdots s_{n-1})$ (the parentheses are just for clarity). Then $\ww^{(n)}$ is a reduced word for the decreasing permutation in $\SSS_n$. In this special case, the random permutation $v^{(n)}$ represented by $\sub_p(\ww^{(n)})$ was studied by Morales, Panova, Petrov, and Yeliussizov \cite{MPPY}, who obtained the estimates \[\frac{2}{3\sqrt{\pi}}\sqrt{\frac{p}{1-p}}\,(1+o(1))n^{3/2}\leq\mathbb E(\inv(v^{(n)}))\leq\frac{4}{3\sqrt{\pi}}\sqrt{\frac{p}{1-p}}\,(1+o(1))n^{3/2}.\] They also conjectured that there exists a constant $\varkappa\approx 0.53$ such that \[\mathbb E(\inv(v^{(n)}))=\varkappa\,\sqrt{\frac{p}{1-p}}\,(1+o(1))n^{3/2}.\] Since $\occ_i(\ww^{(n)})=i$, it follows from \cref{thm:main} that 
\begin{align*}\mathbb E(\inv(v^{(n)}))&=\left(\sqrt{2/\pi}+o(1)\right)\sqrt{\frac{p}{1-p}}\,\sum_{i=1}^{n-1}\sqrt{i} \\ 
&=\frac{2\sqrt{2}}{3\sqrt{\pi}}\sqrt{\frac{p}{1-p}}\,(1+o(1))n^{3/2}.
\end{align*} 
This proves the aforementioned conjecture and shows that the exact value of $\varkappa$ is $\frac{2\sqrt{2}}{3\sqrt{\pi}}$. 
\end{example} 

\begin{example}\label{exam:bipartite1}
Consider the words $\mathsf{x}_n=s_1s_3\cdots s_{2\left\lfloor n/2\right\rfloor-1}$ (consisting of the odd-indexed adjacent transpositions) and $\mathsf{y}_n=s_2s_4\cdots s_{2\left\lfloor (n-1)/2\right\rfloor}$ (consisting of the even-indexed adjacent transpositions). Let 
\[\ww^{(n)}=\begin{cases} (\mathsf{x}_n\mathsf{y}_n)^{n/2} & \mbox{if } n\text{ is even}; \\ (\mathsf{x}_n\mathsf{y}_n)^{(n-1)/2}\mathsf{x}_n & \mbox{if } n\text{ is odd.} \end{cases}\] Then $\ww^{(n)}$ is a reduced word for the decreasing permutation in $\SSS_n$. As before, let $v^{(n)}$ be the random permutation represented by $\sub_p(\ww^{(n)})$. For each $i$, we have $|\occ_i(\ww^{(n)})-n/2|\leq\frac{1}{2}$, so it follows from \cref{thm:main} that 
\[\mathbb E(\inv(v^{(n)}))=\frac{1}{\sqrt{\pi}}\sqrt{\frac{p}{1-p}}\,(1+o(1))n^{3/2}.
\]
\end{example} 

\begin{example}\label{exam:bipartite2}
Let $(\varrho_n)_{n\geq 2}$ be a sequence of integers such that $n^{2\varepsilon}<\varrho_n=o(n^2/\log^{1+\varepsilon} n)$. Let $\mathsf{x}_n$ and $\mathsf{y}_n$ be as in \cref{exam:bipartite1}. Let $x_n$ and $y_n$ be the random permutations represented by $\sub_p(\mathsf{x}_n)$ and $\sub_p(\mathsf{y}_n)$, respectively. Let $v^{(n)}=x_n^{(1)}y_n^{(1)}x_n^{(2)}y_n^{(2)}\cdots x_n^{(\varrho_n)}y_n^{(\varrho_n)}$, where $x_n^{(1)},\ldots,x_n^{(\varrho_n)}$ are independent copies of $x_n$ and $y_n^{(1)},\ldots,y_n^{(\varrho_n)}$ are independent copies of $y_n$. Then $v^{(n)}$ has the same distribution as the permutation represented by $\sub_p((\mathsf{x}_n\mathsf{y}_n)^{\varrho_n})$, so \cref{thm:main} tells us that \[\mathbb E(\inv(v^{(n)}))=\sqrt{2/\pi}\sqrt{\frac{p}{1-p}}\,(1+o(1))n\sqrt{\varrho_n}.
\]  
\end{example}

\begin{remark}
It is likely that some of the hypotheses on the sequence $(\ww^{(n)})_{n\geq 2}$ in \cref{thm:main} could be weakened. We have chosen the hypotheses so as to simplify our arguments while still remaining fairly general. In particular, if each $\ww^{(n)}$ is an alternating reduced word for the decreasing permutation in $\SSS_n$, then these hypotheses are satisfied (see \cite{GGJNP}). 
\end{remark}

\subsection{Random Pipe Dreams} 

We can apply a \dfn{commutation move} to a word by swapping two adjacent letters that commute with each other. Two words are \dfn{commutation equivalent} if one can be obtained from the other by applying a sequence of commutation moves. If $\ww$ and $\ww'$ are commutation equivalent, then the random permutations represented by $\sub_p(\ww)$ and $\sub_p(\ww')$ have the same distribution. Thus, we typically only care about words up to commutation equivalence.

For $x,y\in\ZZ$, define the \dfn{box} $\fb(x,y)$ to be the axis-parallel unit square in $\mathbb R^2$ whose lower left vertex is $(x,y)$. Let $\dd(\fb(x,y))=x-y$. A set of boxes is called a \dfn{shape}; we view a shape $\bS$ as a poset in which $\fb(x,y)\leq\fb(x',y')$ if and only if $x\leq x'$ and $y\leq y'$. A \dfn{linear extension} of $\bS$ is an ordering of the boxes in $\bS$ such that for all $\bb,\bb'\in\bS$ satisfying $\bb\leq\bb'$, the box $\bb$ precedes $\bb'$ in the linear extension. We say $\bS$ has \dfn{rank} $r$ if $1\leq\dd(\bb)\leq r$ for all $\bb\in\bS$.   

Suppose $\bS$ is a finite shape of rank $n-1$. We associate to a linear extension $\lambda=(\bb_1,\ldots,\bb_K)$ of $\bS$ the word $\ww_\lambda=s_{\dd(\bb_1)}\cdots s_{\dd(\bb_K)}$ over the alphabet $\{s_1,\ldots,s_{n-1}\}$. The set of words associated to linear extensions of $\bS$ is a commutation equivalence class. We say $\bS$ is \dfn{order-convex} if for all $\bb,\bb'\in\bS$ with $\bb\leq\bb'$, every box $\bb''$ satisfying $\bb\leq\bb''\leq\bb'$ and $1\leq\dd(\bb'')\leq n-1$ is in $\bS$. 

Given an alternating word $\mathsf{w}$ over $\{s_1,\ldots,s_{n-1}\}$, we can find an order-convex shape $\bS$ of rank $n-1$ and a linear extension $\lambda=(\bb_1,\ldots,\bb_K)$ of $\bS$ such that $\ww=\ww_\lambda$. We can then represent a subword $\mathsf{v}$ of $\ww$ pictorially as a \dfn{pipe dream} as follows. If $\mathsf{v}$ contains the copy of the letter $s_{\dd(\bb_r)}$ corresponding to the box $\bb_r$, then fill $\bb_r$ with the \dfn{cross tile} \Cross\,; otherwise, fill $\bb_r$ with the \dfn{bump tile} \Elbows\,. In addition, whenever $\bS$ contains two boxes $\fb(x,x-1)$ and $\fb(x+1,x)$, connect them with a single elbow \!\SEElbow\!. 
Similarly, whenever $\bS$ contains two boxes $\fb(x,x-n+1)$ and $\fb(x+1,x-n+2)$, connect them with a single elbow \!\NWElbow\!. The resulting pipe dream will contain $n$ \dfn{pipes}, which travel from the southwest boundary of $\bS$ to the northeast boundary of $\bS$. We label the pipes $1,\ldots,n$ in order of their starting positions along the southwest boundary (ordered from northwest to southeast). Let $v(1),\ldots,v(n)$ be the labels of the pipes listed in order of their ending positions along the northeast boundary (ordered from northwest to southeast). Then $v$ is the permutation represented by the subword $\mathsf{v}$. 

In the literature, pipe dreams are usually defined in the \dfn{staircase shape} \[\bS_{\mathrm{staircase}}^{(n)}=\{\fb(x,y):x-n+1\leq y\leq 0<1\leq x\},\] corresponding to the commutation equivalence class of the staircase reduced word from \cref{exam:staircase1}. In this context, pipe dreams are prominent objects in Schubert calculus because they provide combinatorial formulas for Schubert polynomials and Grothendieck polynomials \cite{BergeronBilley,FominKirillov,
HPW,KnutsonMiller1,KnutsonMiller2}. 

The left side of \cref{fig:staircase1} shows a pipe dream in the staircase shape $\bS_{\mathrm{staircase}}^{(8)}$; the resulting permutation is $42138657$. The right side of \cref{fig:staircase1} shows a pipe dream corresponding to a subword of the reduced word $(s_1s_3s_5s_7s_2s_4s_6)^4=(\mathsf{x}_8\mathsf{y}_8)^4$ from \cref{exam:bipartite1}; the resulting permutation is $13248657$. \cref{fig:staircase2} shows a random pipe dream in $\bS_{\mathrm{staircase}}^{(50)}$, generated with the parameter $p=1/2$; the resulting permutation of size $50$ has $134$ inversions.  

\begin{figure}[] 
  \begin{center}
  \includegraphics[height=5.506cm]{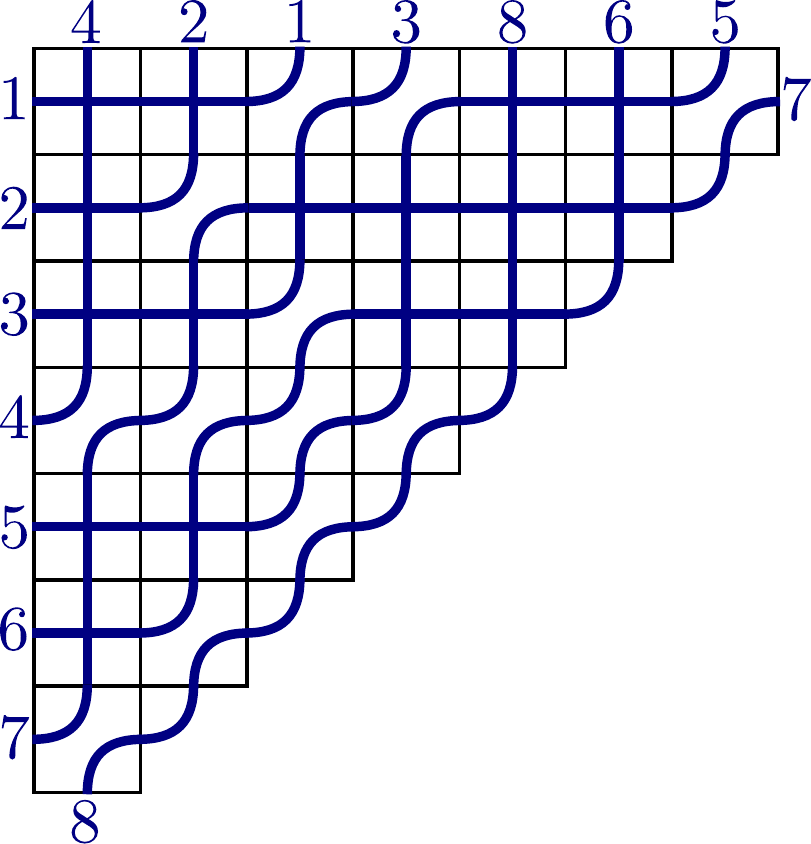}\qquad\qquad\includegraphics[height=5.496cm]{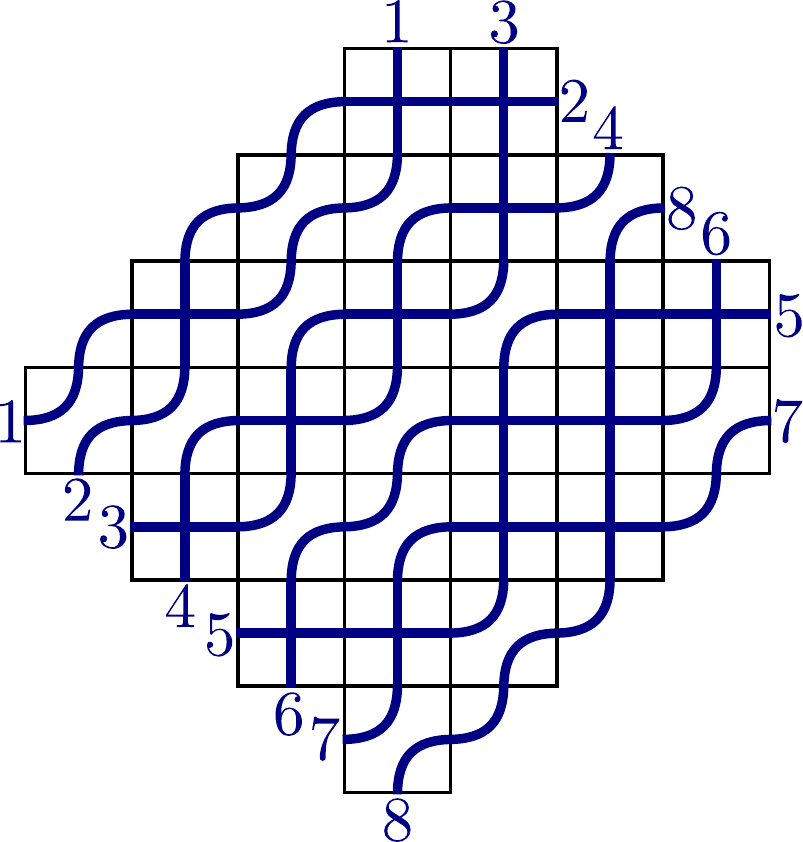}
  \end{center}
\caption{On the left is a pipe dream in the staircase shape $\bS_{\mathrm{staircase}}^{(8)}$; its associated permutation is $42138657$. On the right is a pipe dream representing a subword of $(s_1s_3s_5s_7s_2s_4s_6)^4$; its associated permutation is $13248657$. }\label{fig:staircase1}
\end{figure}

\begin{figure}[]
  \begin{center}
  \includegraphics[width=\linewidth]{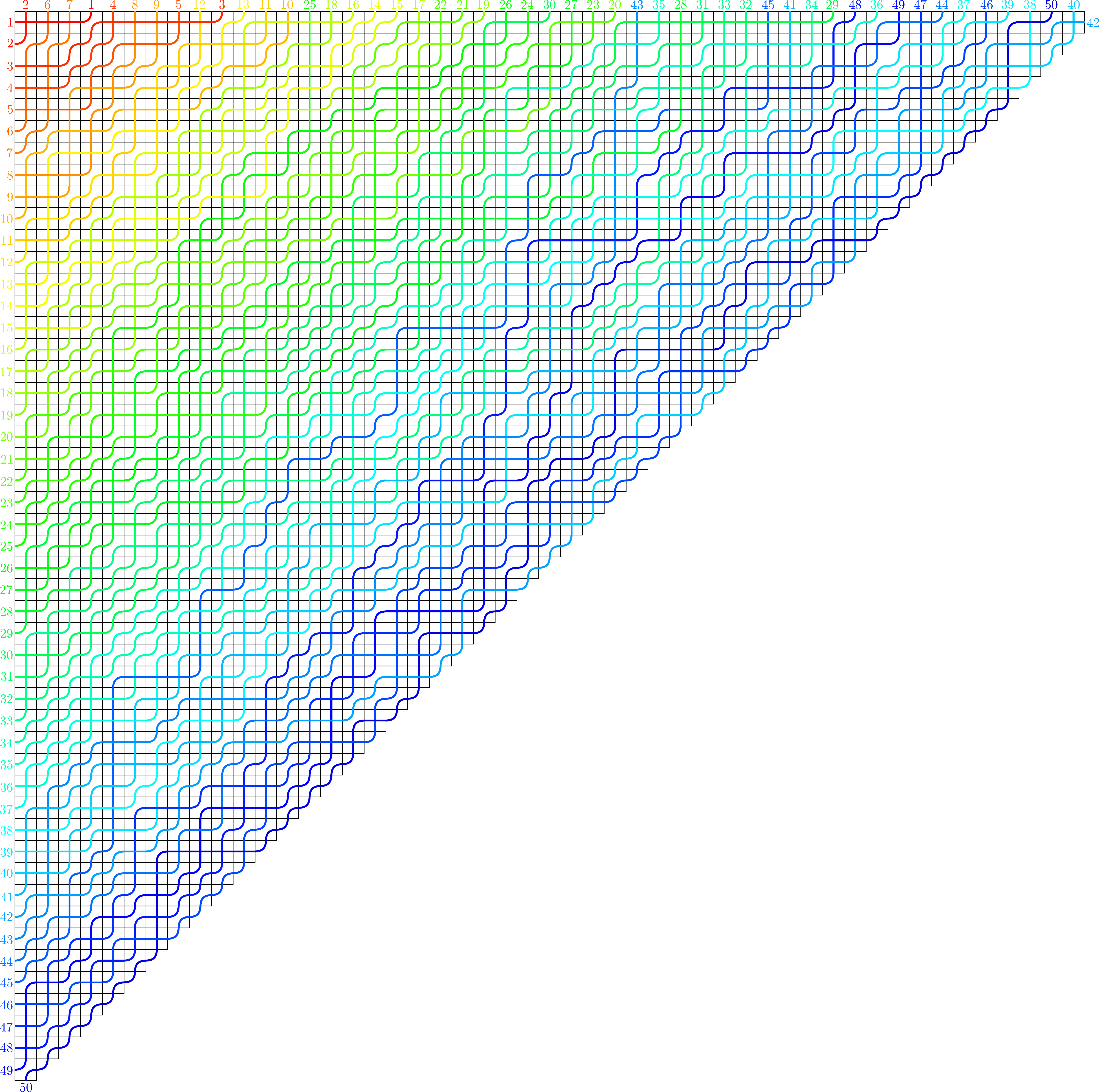}
  \end{center}
\caption{A random pipe dream $\PD_{1/2}(\bS_{\mathrm{staircase}}^{(50)})$ with pipes of different colors. }\label{fig:staircase2}
\end{figure}

We can rephrase \cref{thm:main} in the language of pipe dreams as follows. Given a finite shape $\bS$ of rank $n-1$ and an integer $i\in[n-1]$, let $\occ_i(\bS)$ denote the number of boxes $\bb\in\bS$ such that $\dd(\bb)=i$. Let $\PD_p(\bS)$ be the random pipe dream of shape $\bS$ in which each box is filled with a cross tile with probability $p$ and is filled with a bump tile with probability $1-p$. For each integer $n\geq 2$, let $\bS^{(n)}$ be a finite order-convex shape of rank $n-1$. Assume that \[\max_{i\in[n-1]}\occ_i(\bS^{(n)})=o\left(n^2/\log^{1+\varepsilon} n\right)\quad\text{and}\quad |\{i\in[n-1]:\occ_i(\bS^{(n)})\leq n^{2\varepsilon}\}|=o(n^{1-\varepsilon}).\] 
Let $v^{(n)}$ be the random permutation represented by $\PD_p(\bS^{(n)})$. Then \cref{thm:main} states that 
\[\mathbb E(\inv(v^{(n)}))=\left(\sqrt{2/\pi}+o(1)\right)\sqrt{\frac{p}{1-p}}\,\sum_{i=1}^{n-1}\sqrt{\occ_i(\bS^{(n)})}.\] 
This is the formulation of \cref{thm:main} that we will actually prove.   

There has been substantial interest in subwords of words over $\{s_1,\ldots,s_{n-1}\}$ from topological, geometric, algebraic, and lattice-theoretic perspectives \cite{BCP,BCCP,Cartier,KnutsonMiller1,KnutsonMiller2,
Pilaud1,Pilaud2}. Our probabilistic approach to subwords appears to be new, save for the recent article \cite{MPPY}.\footnote{The current author formulated random subwords and random pipe dreams independently of the authors of \cite{MPPY}, though Great Panova made him aware of their work while it was in preparation after he presented an open problem about random pipe dreams at a conference in honor of Richard Stanley's 80-th birthday.} That said, the last decade has seen other exciting work about random words over $\{s_1,\ldots,s_{n-1}\}$. For example, the articles \cite{Angel,Angel2,Dauvergne,DauvergneVirag} concern random reduced words for the decreasing permutation in $\SSS_n$.

\subsection{Proof Strategy and Outline} 

To prove \cref{thm:main}, we will estimate the probability that two pipes $\pp$ and $\pp'$ form an inversion, and we will then use linearity of expectation. To do this, we imagine that the pipes grow from the southwest boundary to the northeast boundary, and we model the distance between the pipes by a random walk governed by a particular $4$-state Markov chain. In order for this approach to work, we must assume that the starting points of $\pp$ and $\pp'$ lie on a common diagonal line of the form $\{(x,y)\in\mathbb R:x+y=c\}$; roughly speaking, this ensures that the two pipes start growing ``at the same time.'' We first work with \emph{serrated} shapes, which are particular shapes in which all of the starting points of the pipes lie on a single line of this form (see \cref{sec:serrated} for more details). We then show how to reduce the analysis of general shapes in \cref{thm:main} to that of serrated shapes. Thus, our proof crucially requires us to study shapes more general than staircases (as staircases are not serrated). 

There are numerous technical details needed to rigorously carry out the above plan. \cref{sec:concentration} establishes a concentration inequality guaranteeing that each pipe is very unlikely to wander too far from a certain diagonal line. In \cref{sec:distance}, we define and analyze the random walk that governs the distance between two pipes. Much of our analysis is devoted to circumventing the issue that the random walk does not perfectly model the distance between the pipes because the trajectories of the pipes are not completely independent (since the pipes can \emph{kiss}). In \cref{sec:serrated}, we estimate the number of inversions involving a fixed pipe under the assumption that the shape is serrated. \cref{sec:finish_proof} finishes the proof of \cref{thm:main} by reducing the problem from general shapes to serrated shapes. \cref{sec:conclusion} compiles several open problems and suggestions for future research. 

\subsection{Notation and Conventions}\label{sec:notation} 
Whenever we consider a pipe $\pp$ inside a pipe dream in a shape $\bS$, we imagine that it starts at a point on the southwest boundary of $\bS$ and grows north and east over time. At time $0$, the pipe is in a box on the southwest boundary of $\bS$. If $\pp$ is still in the shape at a positive integer time $t$, then at time $t+1$, it either leaves the shape or moves into a new box $\bb_\pp(t+1)$ that is immediately north or immediately east of $\bb_\pp(t)$. To ease notation, let us write 
\[\dd_p(t)=\dd(\bb_p(t)).\] 

Suppose $\pp$ and $\pp'$ are the $j$-th and $j'$-th pipes, respectively, in a pipe dream. We say the pair $(\pp,\pp')$ is an \dfn{inversion} of the pipe dream if $j<j'$ and $\pp$ and $\pp'$ cross an odd number of times. Of course, the number of inversions of a pipe dream is the same as the number of inversions of the permutation represented by the pipe dream.

It is possible for a pipe in a pipe dream to have of length $0$ (i.e., to be a point). In this case, we imagine that the southwest and northeast boundaries of the shape coincide along some path containing the pipe. For instance, the $4$-th and $7$-th pipes in the pipe dream 
\[\begin{array}{l}\includegraphics[height=2.025cm]{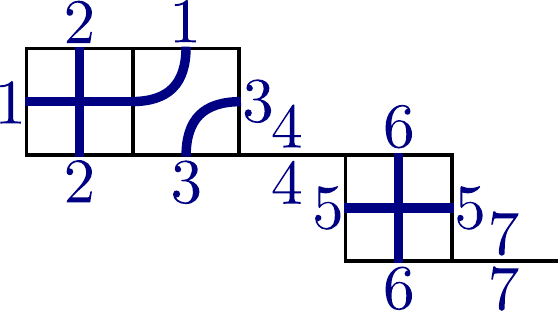}\end{array}\] have length $0$. Whenever we speak of a pipe dream in a shape of rank $n-1$, we implicitly assume (possibly by including pipes of length $0$) that it has exactly $n$ pipes.  

We often omit floor and ceiling symbols; this will not affect the relevant asymptotics. 

\section{Concentration Inequalities for Pipes} \label{sec:concentration} 

Fix an integer $c$. Consider a random pipe dream in the infinite shape 
\begin{equation}\label{eq:RR}
\mathscr{R}=\{\fb(x,y):x,y\in\ZZ,\,x+y\geq c\}
\end{equation} in which each box is filled with a cross tile with probability $p$ and is filled with a bump tile with probability $1-p$. While such a pipe dream does not represent a permutation, it does contain (infinitely many) pipes, each of which is infinitely long, and we can still use the notation from \cref{sec:notation} (except we do not define \emph{inversions} in infinite pipe dreams). We will focus on a single pipe $\pp$. Our goal in this section is to prove \cref{lem:veer}, which is a concentration inequality showing that $\pp$ is very unlikely to veer too far from the diagonal line $\{(x,y)\in\mathbb R^2:x-y=\dd_\pp(0)\}$ in a given amount of time. 

We say $\pp$ \dfn{faces east} (respectively, \dfn{faces north}) at time $t$ if $\pp$ touches the west (respectively, south) side of $\bb_\pp(t)$. Let us say $\pp$ \dfn{turns} at time $t$ if the box $\bb_{\pp}(t-1)$ contains a bump tile. Say $\tau_1,\tau_2,\ldots$ are the times at which $\pp$ turns. For convenience, let $\tau_0=0$. Then for each $k\geq 0$, the quantity $\tau_{k+1}-\tau_k$ is a geometric random variable with parameter $1-p$ (and mean $1/(1-p)$). Moreover, $\dd_\pp(\tau_{k+1})-\dd_\pp(\tau_k)=\pm(\tau_{k+1}-\tau_k-2)$, where the sign depends only on the parity of $k$ and whether $\pp$ faces north or east at time $0$.  

\begin{lemma}\label{lem:half}
If $\pp$ faces east at time $0$, let $\delta=1$; otherwise, let $\delta=-1$. Then for each $k\geq 0$, 
\[\mathbb P(\delta\cdot(\dd_\pp(\tau_k)-\dd_\pp(0))\geq -1)\geq\frac{1}{2}.\] 
\end{lemma}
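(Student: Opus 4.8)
The key observation is that the increments $\dd_\pp(\tau_{k+1})-\dd_\pp(\tau_k)$ are governed by the geometric waiting times $g_k\coloneq\tau_{k+1}-\tau_k$, which are i.i.d.\ geometric with parameter $1-p$, together with a deterministic alternating sign pattern. Concretely, as noted just before the statement, $\dd_\pp(\tau_{k+1})-\dd_\pp(\tau_k)=\sigma_k(g_k-2)$, where $\sigma_k=(-1)^k\delta$ (with $\delta=+1$ if $\pp$ faces east at time $0$ and $\delta=-1$ otherwise), since a turning run of length $g_k$ moves the diagonal coordinate by $g_k-2$ in a direction that flips with each turn. Telescoping, $\delta\cdot(\dd_\pp(\tau_k)-\dd_\pp(0))=\sum_{j=0}^{k-1}(-1)^j(g_j-2)$. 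So the claim reduces to the purely probabilistic statement that, for i.i.d.\ geometric $g_0,\ldots,g_{k-1}$ with parameter $1-p$ (supported on $\{1,2,3,\dots\}$, mean $1/(1-p)>1$), one has $\mathbb P\!\left(\sum_{j=0}^{k-1}(-1)^j(g_j-2)\geq -1\right)\geq \tfrac12$.

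I would prove this by a pairing/symmetrization argument rather than by computing the distribution. Group the terms into consecutive pairs: for even $k=2m$, write $\sum_{j=0}^{2m-1}(-1)^j(g_j-2)=\sum_{i=1}^{m}(g_{2i-2}-g_{2i-1})$, a sum of $m$ i.i.d.\ symmetric random variables $D_i\coloneq g_{2i-2}-g_{2i-1}$ (each a difference of two i.i.d.\ copies, hence symmetric about $0$). Thus the partial sum $S\coloneq\sum D_i$ is symmetric about $0$, so $\mathbb P(S\geq -1)\geq \mathbb P(S\geq 0)\geq \tfrac12$ — in fact $\mathbb P(S\geq 0)+\mathbb P(S\leq 0)=1+\mathbb P(S=0)\geq 1$ gives $\mathbb P(S\geq 0)\geq \tfrac12$, and $\{S\geq 0\}\subseteq\{S\geq -1\}$. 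For odd $k=2m+1$, the sum is $S+(-1)^{2m}(g_{2m}-2)=S+(g_{2m}-2)$ with $g_{2m}\geq 1$ independent of $S$, so $S+(g_{2m}-2)\geq S-1$; hence $\mathbb P(S+(g_{2m}-2)\geq -1)\geq \mathbb P(S\geq 0)\geq \tfrac12$. The base case $k=0$ is trivial since $\dd_\pp(\tau_0)-\dd_\pp(0)=0\geq -1$.

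The main thing to get right is the bookkeeping of signs and the constant shift by $2$: one must check that a maximal run of $g$ boxes between consecutive turns really changes $\dd_\pp$ by exactly $\pm(g-2)$ — this is because the pipe enters the run making one turn (contributing a unit step in one coordinate that does not change the difference $x-y$ appropriately) and exits making the next turn, so of the $g$ steps in the run, $g-1$ are in a single cardinal direction and the turn itself accounts for the ``$-2$'' versus a naive count; the sign alternates precisely because the facing direction (north vs.\ east) flips at each turn. Once the reduction to $\delta\cdot(\dd_\pp(\tau_k)-\dd_\pp(0))=\sum_{j=0}^{k-1}(-1)^j(g_j-2)$ is in hand, the symmetrization is the whole proof and there is no real obstacle; the only subtlety is the parity split between $k$ even and $k$ odd, which is handled as above by absorbing the leftover term using $g\geq 1$. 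I expect the sign/offset verification — i.e.\ translating the geometry of turning runs into the algebraic identity — to be the step most prone to error, so I would state it carefully, perhaps with a small figure or an explicit two-run computation.
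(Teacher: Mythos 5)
Your proposal is correct and follows essentially the same route as the paper: both telescope the alternating increments $\pm(\tau_{j+1}-\tau_j-2)$, handle the even case by observing that the resulting sum is a difference of two i.i.d.\ sums (equivalently, a sum of i.i.d.\ symmetric variables) and hence nonnegative with probability at least $\tfrac12$, and handle the odd case by noting the leftover increment is at least $-1$ since each geometric waiting time is at least $1$. No substantive differences.
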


\begin{proof}
Assume $\pp$ faces east at time $0$ so that $\delta=1$; a completely symmetric argument handles the other case. Fix $k\geq 0$. If $k$ is odd, let $k'=k-1$; otherwise, let $k'=k$. If $k$ is even, then $\dd_\pp({\tau_k})-\dd_\pp(\tau_{k'})=0$. If $k$ is odd, then 
\[\dd_\pp({\tau_{k}})-\dd_\pp({\tau_{k'}})=\tau_k-\tau_{k-1}-2\geq -1.\] In either case, we have $\mathbb P(\dd_\pp({\tau_k})-\dd_\pp(0)\geq -1)\geq\mathbb P(\dd_\pp(\tau_{k'})-\dd_\pp(0)\geq 0)$. Now, 
\begin{align*}\dd_\pp({\tau_{k'}})-\dd_\pp(0)&=\sum_{i=1}^{k'/2}(\dd_\pp({\tau_{2i-1}})-\dd_\pp({\tau_{2i-2}}))+\sum_{i=1}^{k'/2}(\dd_\pp({\tau_{2i}})-\dd_\pp({\tau_{2i-1}})) \\ &=\sum_{i=1}^{k'/2}(\tau_{2i-1}-\tau_{2i-2}-2)-\sum_{i=1}^{k'/2}(\tau_{2i}-\tau_{2i-1}-2).  
\end{align*}
But \[\sum_{i=1}^{k'/2}(\tau_{2i-1}-\tau_{2i-2}-2)\quad\text{and}\quad\sum_{i=1}^{k'/2}(\tau_{2i}-\tau_{2i-1}-2)\] are independent random variables with the same distribution, so $\mathbb P(\dd_\pp(\tau_{k'})-\dd_\pp(0)\geq 0)\geq\frac{1}{2}$. 
\end{proof}

We will need the following concentration inequality for sums of i.i.d.\ geometric random variables. 

\begin{lemma}\label{lem:Chernoff}
Let $G_1,G_2,\ldots$ be independent geometric random variables with parameter $1-p$. For every integer $k\geq 1$ and every $\xi>0$, we have \[\mathbb P(|G_1+\cdots+G_{k}-k/(1-p)|\geq \xi)\leq 2\exp\left(-\frac{1-p}{2}\frac{\xi^2}{k/(1-p)+\xi}\right).\]
\end{lemma}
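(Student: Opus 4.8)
The plan is to use the standard exponential moment method (Chernoff--Bernstein approach). First I would recall that for a geometric random variable $G$ with parameter $1-p$ taking values in $\{1,2,\ldots\}$, the moment generating function is $\mathbb E(e^{tG}) = \frac{(1-p)e^t}{1-pe^t}$ for $t < \log(1/p)$, and its mean is $\mu = 1/(1-p)$. Rather than manipulate this MGF directly, it is cleaner to pass to a Bernstein-type inequality. I would verify a bound on the centered moments: writing $\sigma^2 = \operatorname{Var}(G) = p/(1-p)^2$ and noting that the centered random variable $G - \mu$ satisfies a one-sided exponential tail (it is bounded below by $1-\mu$, and has a light upper tail), one checks that there is a constant such that $\mathbb E\bigl(e^{t(G-\mu)}\bigr) \le \exp\!\bigl(\frac{t^2 \sigma^2 / 2}{1 - ct}\bigr)$ for $t$ in a suitable range. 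By independence this tensorizes: $\mathbb E\bigl(e^{t(S_k - k\mu)}\bigr) \le \exp\!\bigl(\frac{k t^2 \sigma^2 / 2}{1-ct}\bigr)$, where $S_k = G_1 + \cdots + G_k$. Then Markov's inequality applied to $e^{t(S_k - k\mu)}$ and to $e^{-t(S_k - k\mu)}$, followed by optimizing over $t$, yields the two-sided bound $\mathbb P(|S_k - k\mu| \ge \xi) \le 2\exp\!\bigl(-\frac{\xi^2}{2(k\sigma^2 + c'\xi)}\bigr)$ for appropriate constants.

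The work is in getting the constants to match the exact form stated, namely $2\exp\!\bigl(-\frac{1-p}{2}\frac{\xi^2}{k/(1-p)+\xi}\bigr)$. Rewriting the exponent, this is $2\exp\!\bigl(-\frac{\xi^2}{2(k/(1-p)^2 + \xi/(1-p))}\bigr)$. Comparing with the generic Bernstein bound above, this is exactly what one obtains with $\sigma^2 = 1/(1-p)^2$ (an upper bound for the true variance $p/(1-p)^2$, which is harmless) and with the "range/scale" parameter taken to be $b = 1/(1-p)$. So the concrete task is: show that $S_k - k/(1-p)$ satisfies Bennett/Bernstein conditions with variance proxy $k/(1-p)^2$ and scale $1/(1-p)$. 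A convenient route is to note that $S_k$ is distributed as the number of trials needed to obtain $k$ successes in Bernoulli$(1-p)$ trials (negative binomial), equivalently $S_k - k$ counts the failures, which is a sum of $k$ i.i.d. geometric-on-$\{0,1,\ldots\}$ variables; alternatively, embed everything in a sequence of Bernoulli trials and use that $\{S_k > m\}$ is the event that fewer than $k$ of the first $m$ trials succeed, reducing the lower-tail estimate on $S_k$ to a Chernoff bound on a Binomial, where the classical $\mathbb P(\mathrm{Bin}(m,q) \le qm - a) \le \exp(-a^2/(2qm))$ is available. I would likely present the negative-binomial-to-binomial reduction for the upper tail of $S_k$ (i.e., the event $S_k \ge k/(1-p) + \xi$) and a symmetric argument for the lower tail, then algebraically massage the resulting binomial Chernoff bounds into the stated form.

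The main obstacle I anticipate is purely bookkeeping: controlling the exponential moment of a geometric random variable near the boundary of its domain of finiteness (it blows up at $t = \log(1/p)$), so one must choose the optimization range for $t$ carefully and absorb the correction into the $+\xi$ term in the denominator — this is precisely why a Bernstein-type (rather than a sub-Gaussian) bound is the right target. If the direct MGF optimization turns out messy, the binomial-reduction route sidesteps it entirely, since the relevant binomial MGF is entire; I would fall back on that. Either way there is no conceptual difficulty — the lemma is a standard concentration estimate — and the only care needed is to land on the specific constants $\frac{1-p}{2}$ and $k/(1-p) + \xi$ claimed in the statement, which (as noted) correspond to the benign choice of $1/(1-p)^2$ as a variance proxy.
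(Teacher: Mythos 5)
Your fallback route --- embedding the geometric sum into a sequence of Bernoulli trials so that the tail events $\{S_k \geq k/(1-p)+\xi\}$ and $\{S_k \leq k/(1-p)-\xi\}$ become lower- and upper-tail events for a binomial, then applying the standard multiplicative Chernoff bounds --- is exactly the proof given in the paper, including the choice of variance proxy that produces the constants $\frac{1-p}{2}$ and $k/(1-p)+\xi$. Your proposal is correct and matches the paper's argument.
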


\begin{proof}
Let 
\[Q_m=\begin{cases} 1 & \mbox{if } m=\sum_{i=1}^jG_i\text{ for some }j\geq 1; \\ 0 & \mbox{otherwise.} \end{cases}\] Note that $Q_1,Q_2,\ldots$ is a sequence of independent Bernoulli random variables, each with expected value $1-p$. Let $\mu=k+(1-p)\xi$ and $\delta=(1-p)\xi/\mu$ so that $k=(1-\delta)\mu$. Then $\sum_{1\leq m\leq\mu/(1-p)}Q_m$ is a binomial random variable with parameters $\left\lfloor\mu/(1-p)\right\rfloor$ and $1-p$, so we can use a standard Chernoff bound to find that 
\begin{align}\label{eq:Chernoff1}
\nonumber \mathbb P\left(G_1+\cdots+G_{k}\geq k/(1-p)+\xi\right) &= \mathbb P\left(\sum_{1\leq m\leq k/(1-p)+\xi}Q_m\leq k\right) \\ 
\nonumber &=\mathbb P\left(\sum_{1\leq m\leq \mu/(1-p)}Q_m\leq(1-\delta)\mu\right) \\ 
\nonumber &\leq\exp\left(-\mu\delta^2/2\right) \\ &=\exp\left(-\frac{1-p}{2}\frac{\xi^2}{k/(1-p)+\xi}\right).
\end{align} 
Now let $\mu'=k-(1-p)\xi$ and $\delta'=(1-p)\xi/\mu'$ so that $k=(1+\delta')\mu'$. Then $\sum_{1\leq m\leq\mu'/(1-p)}Q_m$ is a binomial random variable with parameters $\left\lfloor\mu'/(1-p)\right\rfloor$ and $1-p$, so we can again use a standard Chernoff bound to find that 
\begin{align}\label{eq:Chernoff2}
\nonumber \mathbb P\left(G_1+\cdots+G_{k}\leq k/(1-p)-\xi\right) &= \mathbb P\left(\sum_{1\leq m\leq k/(1-p)-\xi}Q_m\geq k\right) \\ \nonumber 
&=\mathbb P\left(\sum_{1\leq m\leq \mu'/(1-p)}Q_m\geq(1+\delta')\mu'\right) \\ \nonumber 
&\leq\exp\left(-\mu'(\delta')^2/(2+\delta')\right) \\ \nonumber &=\exp\left(-(1-p)\frac{\xi^2}{2k/(1-p)-\xi}\right) \\ &\leq \exp\left(-\frac{1-p}{2}\frac{\xi^2}{k/(1-p)+\xi}\right). 
\end{align} 
The desired result follows from \eqref{eq:Chernoff1} and \eqref{eq:Chernoff2}. 
\end{proof}

For $z\geq 0$, let \[T_\pp(z)=\inf\{t\geq 0:|\dd_\pp(t)-\dd_\pp(0)|\geq z\}.\]

\begin{lemma}\label{lem:veer} 
There are constants $C_1,C_2>0$ such that for all $r\geq 0$ and $z\geq 2$, we have
\[\mathbb P(T_\pp(z)\leq r)<C_1\exp\left(-C_2\frac{z^2}{z+r}\right).\]   
\end{lemma}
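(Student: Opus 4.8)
The plan is to analyze the pipe $\pp$ through its sequence of turning times $\tau_0 = 0 < \tau_1 < \tau_2 < \cdots$, using the fact that the increments $G_k \coloneq \tau_k - \tau_{k-1}$ are i.i.d.\ geometric random variables with parameter $1-p$, and that $\dd_\pp(\tau_k) - \dd_\pp(0)$ is (up to the deterministic correction coming from the $-2$ shifts) an alternating sum of these increments. The key idea is that in order for $|\dd_\pp(t) - \dd_\pp(0)| \geq z$ to occur at some time $t \leq r$, the pipe must accumulate a net displacement of size $z$; since the displacement only changes monotonically between consecutive turns, and between turns $\tau_{k-1}$ and $\tau_k$ the coordinate $\dd_\pp$ moves by at most $G_k$ in absolute value, a displacement of $z$ forces $G_1 + \cdots + G_k \geq z$ for the relevant $k$, or else forces the pipe to have turned many times — both of which are controlled by \cref{lem:Chernoff}.

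More concretely, first I would reduce to bounding, for each fixed $k$, the probability that $|\dd_\pp(\tau_k) - \dd_\pp(0)| \geq z$ while $\tau_k \leq r$, and separately the probability that $\tau_k \leq r$ for $k$ larger than some threshold $k_0 \asymp r(1-p)$. For the second event, $\tau_k = G_1 + \cdots + G_k$, so $\mathbb P(\tau_k \leq r)$ with $k = k_0$ chosen so that $k_0/(1-p) = r + z$ (say) is bounded by \cref{lem:Chernoff} with $\xi \asymp z$, giving a bound of the form $2\exp(-c z^2/(z+r))$. For the first event with $k < k_0$: conditioning on $k$ even vs.\ odd, $\dd_\pp(\tau_k) - \dd_\pp(0)$ equals $\pm\big(\sum_{i \text{ odd}} (G_i - 2) - \sum_{i \text{ even}} (G_i - 2)\big)$ over $i \leq k$, which is a difference of two independent sums of shifted geometrics. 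If this difference has absolute value $\geq z$, then at least one of the two sums deviates from its mean $k'(1/(1-p) - 2)/2$ by at least $z/2$ (where $k' \leq k_0$), so \cref{lem:Chernoff} applied to a sum of $\Theta(k_0)$ geometrics with deviation $\xi = z/2$ yields a bound $2\exp(-c z^2/(z + k_0/(1-p))) = 2\exp(-c' z^2/(z+r))$. Finally, a union bound over $k \leq k_0$ would naively cost a factor $k_0 \asymp r$, which is too lossy; to avoid this I would instead observe that the first time $\dd_\pp$ reaches distance $z$ happens within a single excursion, so it suffices to bound $\mathbb P(\max_{k \leq k_0} |\dd_\pp(\tau_k) - \dd_\pp(0)| \geq z, \ \tau_{k_0} \geq r)$ — no wait, better: apply a maximal inequality or simply note that $\{T_\pp(z) \leq r\} \subseteq \{\tau_{k_0} \leq r\} \cup \{\exists t \leq \min(r, \tau_{k_0}): |\dd_\pp(t) - \dd_\pp(0)| \geq z\}$, and on the latter event, since $\dd_\pp$ between turns is monotone and each turn-to-turn jump is at most $G_k$, the total variation of $\dd_\pp$ up to time $\min(r,\tau_{k_0})$ is at most $\sum_{k \leq k_0} G_k = \tau_{k_0}$, so actually reaching level $z$ requires $\tau_{k_0} \geq z$ — which is automatic — and more usefully requires that $\sum_{i \leq k} (G_i - 2) \geq$ something; here I would use the representation via $Q_m$'s from the proof of \cref{lem:Chernoff} directly, tracking $\dd_\pp$ as a lazy random walk and applying a Chernoff bound to the walk itself rather than to the $\tau_k$.

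The cleanest route, which I would ultimately pursue, is: let $N(r)$ be the number of turns up to time $r$, so $N(r) \leq k$ iff $\tau_{k+1} > r$. Write $\dd_\pp(r) - \dd_\pp(0)$ as a signed sum of the first $N(r)$ increments minus $2N(r)$-ish corrections plus a boundary term of size $\leq G_{N(r)+1}$. Then decompose $\{T_\pp(z) \le r\}$ according to whether $N(r) \leq k_0$ or not, with $k_0 = 2(1-p)(r+z)$: if $N(r) > k_0$ then $\tau_{k_0} \le r$, handled above; if $N(r) \le k_0$ then some partial signed sum of at most $k_0$ shifted geometrics (plus one extra increment) has absolute value $\ge z$, and here I invoke a Hoeffding/Bernstein-type maximal inequality for sums of i.i.d.\ sub-exponential random variables — or, to stay self-contained, apply the Doob maximal inequality to the martingale $M_k = \sum_{i \le k}(\delta(-1)^i(G_i - \mathbb E G_i))$ together with the one-sided Chernoff bounds already derived, noting that $\exp(\theta M_k)$ is a submartingale — to get $\mathbb P(\max_{k \le k_0}|M_k| \ge z/2) \le 2\exp(-c z^2/(z + k_0/(1-p)))$ without any polynomial loss. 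Combining the two pieces and adjusting constants gives the claimed $C_1 \exp(-C_2 z^2/(z+r))$.

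The main obstacle I anticipate is precisely this union-bound-over-$k$ issue: a term-by-term union bound over $\Theta(r)$ turning times would produce an extra factor of $r$ in front of the exponential, which is not allowed by the statement. Resolving it cleanly requires either a maximal-inequality argument (Doob applied to the exponential submartingale built from the alternating increment sum) or a careful "first passage happens in one excursion" argument that replaces the maximum over $k$ by a statement about a single partial sum reaching level $z$; making the sub-exponential/geometric tails interact correctly with the maximal inequality — in particular getting the denominator $z + r$ rather than $z + r^{1/2}$ or worse — is the delicate point. A secondary, more routine nuisance is bookkeeping the deterministic $-2$ shifts and the $\pm$ sign (depending on the parity of $k$ and whether $\pp$ faces north or east at time $0$, exactly as set up before \cref{lem:half}), and handling the boundary increment $G_{N(r)+1}$, whose tail is a single geometric and hence harmless, being absorbed into the constants via a further application of \cref{lem:Chernoff} with $k=1$.
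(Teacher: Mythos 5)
Your overall decomposition is the same as the paper's: set a threshold $K\asymp(1-p)(z+r)$ on the number of turns, bound $\mathbb P(\tau_{2K}\leq r)$ by applying \cref{lem:Chernoff} to the sum of $2K$ geometrics with deviation $\xi=z$, and separately bound the probability that first passage to level $z$ occurs before $\tau_{2K}$. You also correctly identify the one genuinely delicate point, namely that a naive union bound over the $\Theta(r)$ turning times would cost a polynomial factor. Where you diverge is in how you kill that factor. The paper does it with \cref{lem:half}: conditional on $T_\pp(z)\leq\tau_{2K}$, it restarts the pipe at the first-passage time in a shifted copy of $\mathscr R$ and uses the symmetry of the alternating geometric sums to conclude that with probability at least $\tfrac12$ the displacement at the \emph{fixed} time $\tau_{2K}$ is still at least $z-1$; this converts the first-passage probability into twice an endpoint probability, which is then handled by \cref{lem:Chernoff} applied to $A_1$ and $A_2$. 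You instead propose Doob's maximal inequality for the exponential submartingale $\exp(\theta M_k)$ built from the alternating centered increments. That route is also valid and standard (Chernoff plus Doob gives a maximal Chernoff bound with no polynomial loss, and the Bernstein-type exponent $\min(z^2/K,z)$ reduces to $z^2/(z+r)$ here), but note that it is not quite a free consequence of \cref{lem:Chernoff} as stated: that lemma gives only a tail bound via the Bernoulli representation $Q_m$, whereas Doob requires a bound on the moment generating function $\mathbb E\,e^{\theta M_{k_0}}$, so you would need a short additional MGF computation for geometric increments (fine for $|\theta|$ small, since $p<1$). The paper's reflection-style argument is more bespoke but entirely elementary, avoiding MGFs and martingale machinery; yours is more off-the-shelf. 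The remaining bookkeeping you flag (the $-2$ shifts contribute an alternating sum of the constant $\tfrac1{1-p}-2$, hence an $O(1)$ correction, and the boundary increment and within-segment monotonicity cost only $O(1)$ more) is indeed routine, so I see no gap in your plan.
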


\begin{proof}
Let $\tau_1,\tau_2,\ldots$ be the times when $\pp$ turns, and let $\tau_0=0$. Let $K=\left\lfloor \frac{1-p}{2}(z+r)\right\rfloor$. To prove the lemma, we will show that $\mathbb P(\tau_{2K}\leq r)$ and 
$\mathbb P(T_\pp(z)\leq \tau_{2K})$ are both small. 

First, since $\tau_1-\tau_0,\tau_2-\tau_1,\ldots,\tau_{2K}-\tau_{2K-1}$ are independent geometric random variables with parameter $1-p$, we can apply \cref{lem:Chernoff} with $k=2K$ and $\xi=z$ to find that 
\begin{align}\label{eq:bound1}
\nonumber \mathbb P(\tau_{2K}\leq r)&=\mathbb P\left((\tau_1-\tau_0)+\cdots+(\tau_{2K}-\tau_{2K-1})-2K/(1-p)\leq r-2K/(1-p)\right) \\ \nonumber &\leq \mathbb P\left((\tau_1-\tau_0)+\cdots+(\tau_{2K}-\tau_{2K-1})-2K/(1-p)\leq -z\right) \\ \nonumber &\leq \mathbb P\left(|(\tau_1-\tau_0)+\cdots+(\tau_{2K}-\tau_{2K-1})-2K/(1-p)|\geq z\right) \\
\nonumber &\leq 2\exp\left(-\frac{1-p}{2}\frac{z^2}{2K/(1-p)+z}\right) \\ &\leq 2\exp\left(-\frac{1-p}{4}\frac{z^2}{z+r}\right).
\end{align} 

Now let $T=T_\pp(z)$, and suppose $T\leq \tau_{2K}$. If $\bb_\pp(T)$ is to the east of $\bb_\pp({T-1})$, let $\delta=1$; otherwise, let $\delta=-1$. Then $\delta\cdot(\dd_\pp(T)-\dd_\pp(0))\geq z$. Consider the shape \[\widetilde{\mathscr R}=\{\fb(x,y):x,y\in\ZZ,\,x+y\geq T+c\},\] where $c$ is the integer used to define $\mathscr{R}$ in \eqref{eq:RR}. The part of the pipe $\pp$ that grows starting at time $T$ can be seen as a pipe $\widetilde{\pp}$ in $\widetilde{\mathscr{R}}$. Applying \cref{lem:half} to $\widetilde{\pp}$, we find that \[\mathbb P(\delta\cdot(\dd_\pp({\tau_{2K}})-\dd_\pp(T))\geq -1)\geq\frac{1}{2},\] so \[\mathbb P(|\dd_\pp({\tau_{2K}})-\dd_\pp(0)|\geq z-1)\geq\frac{1}{2}.\]  

The preceding paragraph tells us that the conditional probability of the event that \[|\dd_\pp({\tau_{2K}})-\dd_\pp(0)|\geq z-1\] given that $T\leq \tau_{2K}$ is at least $\frac{1}{2}$. Therefore, 
\begin{align*}
\mathbb P(T\leq \tau_{2K})&\leq 2\mathbb P(|\dd_\pp({\tau_{2K}})-\dd_\pp(0)|\geq z-1\text{ and }T\leq \tau_{2K}) \\ &\leq 2\mathbb P(|\dd_\pp({\tau_{2K}})-\dd_\pp(0)|\geq z-1). 
\end{align*}
Now,
\[\left|\dd_\pp({\tau_{2K}})-\dd_\pp(0)\right|=|A_1-A_2|\leq|A_1|+|A_2|,\] where 
\[A_1=\sum_{i=1}^{K}\left(\tau_{2i}-\tau_{2i-1}\right)-K/(1-p)\quad\text{and}\quad A_2=\sum_{i=1}^{K}\left(\tau_{2i-1}-\tau_{2i-2}\right)-K/(1-p).\] It follows that 
\begin{align*}
\mathbb P(|\dd_\pp(\tau_{2K})-\dd_\pp(0)|\geq z-1)&\leq \mathbb P(|A_1|\geq(z-1)/2)+\mathbb P(|A_2|\geq(z-1)/2) \\ &=2\mathbb P(|A_1|\geq(z-1)/2),
\end{align*} 
where the second equality follows from the observation that $A_1$ and $A_2$ are i.i.d.\ random variables. Applying \cref{lem:Chernoff} with $k=K$ and $\xi=(z-1)/2$, we find that 
\begin{align}\label{eq:bound2} 
\nonumber \mathbb P(T\leq \tau_{2K})&\leq 4\mathbb P(|A_1|\geq (z-1)/2) \\ \nonumber &\leq 8\exp\left(-\frac{1-p}{2}\frac{(z-1)^2/4}{(z+r)/2+(z-1)/2}\right) \\ \nonumber &= 8\exp\left(-\frac{1-p}{4}\frac{(z-1)^2}{2z+r-1}\right) \\ &< 8\exp\left(-\frac{1-p}{4}\frac{z^2/4}{3(z+r)}\right).
\end{align}

Since $\mathbb P(T\leq r)\leq\mathbb P(\tau_{2K}\leq r)+\mathbb P(T\leq \tau_{2K})$, the desired result follows from \eqref{eq:bound1} and \eqref{eq:bound2}.  
\end{proof}

In the remainder of the article, we always let $C_1$ and $C_2$ be the constants from \cref{lem:veer}. 

\section{The Distance Between Two Pipes}\label{sec:distance} 

Continuing to work with a random pipe dream in the shape $\mathscr{R}=\{\fb(x,y):x,y\in\ZZ,\,x+y\geq 0\}$ as in the preceding section, we now simultaneously consider two pipes $\pp$ and $\mathfrak p'$. As before, we imagine that each pipe grows in the north and east directions, moving into a new box at each positive integer time step. Let $\bb_{\qq}(t)$ denote the box containing a pipe $\qq$ at time $t$, and let $\dd_{\qq}(t)=\dd(\bb_{\qq}(t))$. Let us also write $\hh_{\pp,\pp'}(t)=\dd_{\pp'}(t)-\dd_{\pp}(t)$. We will assume that $\hh_{\pp,\pp'}(0)$ is fixed and is nonnegative. Say $\pp$ and $\pp'$ \dfn{kiss} at time $t$ if $\bb_\pp(t)=\bb_{\pp'}(t)$ (equivalently, $\hh_{\pp,\pp'}(t)=0$). We want to understand the \dfn{first kiss} of $\mathfrak p$ and $\mathfrak p'$, which we define to be 
\[\TT_{\heartsuit}=\TT_{\heartsuit}(\mathfrak p,\mathfrak p')=\inf\{t\geq 0:\bb_\pp(t)=\bb_{\pp'}(t)\}.\] 

Letting $\N$ stand for ``north'' and $\E$ stand for ``east,'' let $\ff_t\in\Omega\coloneq\{\N\N,\N\E,\E\N,\E\E\}$ be the string of length $2$ whose first and second letters encode the directions that $\pp$ and $\pp'$, respectively, face at time $t$. For example, if at time $t$ the pipe $\pp$ faces east and the pipe $\pp'$ faces north, then $\ff_t=\E\N$. 

Let ${\bf Y}=(Y_t)_{t\geq 0}$ be the Markov chain with state space $\Omega$ and transition diagram \[\begin{array}{l}\includegraphics[height=2.914cm]{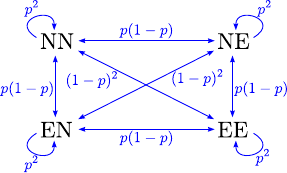}\end{array}.\] 
The transition matrix of ${\bf Y}$ (with rows and columns indexed by states in the order $\N\N,\N\E,\E\N,\E\E$) is \begin{equation}\label{eq:P_matrix}
P=\begin{pmatrix}
    p^2 & p(1-p) & p(1-p) & (1-p)^2 \\
    p(1-p) & p^2 & (1-p)^2 & p(1-p) \\
    p(1-p) & (1-p)^2 & p^2 & p(1-p) \\
    (1-p)^2 & p(1-p) & p(1-p) & p^2 
\end{pmatrix}.
\end{equation} 
If $t$ is a time such that $\bb_\pp(t)\neq\bb_{\pp'}(t)$ and we are given $\ff_t$, then we can easily compute the distribution of $\ff_{t+1}$; more precisely, for $\XX,\XX'\in\Omega$, we have 
\[\mathbb P(\ff_{t+1}=\XX\mid\ff_t=\XX')=\mathbb P(Y_{t+1}=\XX\mid Y_t=\XX').\] 
Moreover, $\hh_{\pp,\pp'}(t+1)-\hh_{\pp,\pp'}(t)=\nu(\ff_{t+1})$, where 
\[\nu(\N\N)=\nu(\E\E)=0,\quad\nu(\N\E)=2,\quad\nu(\E\N)=-2.\] One of the difficulties of working directly with the pipes $\pp$ and $\pp'$ is that we have to worry about the lack of independence in their trajectories when they kiss; this is the advantage of working with the Markov chain ${\bf Y}$ instead. 

For $k\geq 1$, we are interested in the random variable \[\psi_k=\sum_{t=1}^k\nu(Y_t).\] 
Let $\Phi_p\colon\mathbb R\to\mathbb R$ be the cumulative distribution function of a normal random variable with mean $0$ and variance $\frac{2p}{1-p}$. That is, 
\[\Phi_p(x)=\frac{1}{2\sqrt{\pi}}\sqrt{\frac{1-p}{p}}\int_{-\infty}^x \exp\left(-\frac{1-p}{4p}u^2\right)du.\]  

\begin{lemma}\label{lem:Q_constant}
We have 
\[\left|\mathbb P(\psi_k\leq z)-\Phi_p(zk^{-1/2})\right|=O(k^{-1/2})\] for all $k\geq 1$ and $z\in\mathbb R$.  
\end{lemma}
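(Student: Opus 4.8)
The statement is a Berry--Esseen-type estimate for the additive functional $\psi_k = \sum_{t=1}^k \nu(Y_t)$ of the finite-state Markov chain $\mathbf Y$, with the normalizing variance $\frac{2p}{1-p}$ appearing in $\Phi_p$. The plan is to realize $\psi_k$ as a sum of (nearly) i.i.d.\ increments by exploiting the regeneration structure of $\mathbf Y$, and then apply the classical Berry--Esseen theorem together with standard renewal estimates. The first thing I would observe is that the states $\N\N$ and $\E\E$ (the ``no-change'' states, on which $\nu$ vanishes) are hit infinitely often, so I would define the stopping times $0 \le \sigma_0 < \sigma_1 < \sigma_2 < \cdots$ to be the successive visits of $\mathbf Y$ to the set $\{\N\N,\E\E\}$ (or, if one prefers a single regeneration atom, to the state $\N\N$ alone, using that the chain is irreducible and aperiodic on $\Omega$). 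Between consecutive regeneration times the excursions of $\mathbf Y$ are i.i.d.\ (after the first), the inter-regeneration gaps $\sigma_{j+1}-\sigma_j$ have exponential tails, and the excursion sums $\sum_{t=\sigma_j+1}^{\sigma_{j+1}} \nu(Y_t)$ are i.i.d.\ bounded-variance random variables with mean zero --- the mean is zero because of the evident symmetry $\N\E \leftrightarrow \E\N$ in the transition matrix $P$, which is invariant under the swap of the two middle coordinates, so $\nu$ averages to $0$ against the stationary distribution and hence over a full excursion.

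Given this, the argument has three steps. First, let $N(k) = \max\{j : \sigma_j \le k\}$ be the number of completed excursions by time $k$; by the renewal theorem and the exponential tail on the gaps, $N(k) = \Theta(k)$ with overwhelming probability, and more precisely $N(k)$ concentrates around $k/\mathbb E[\sigma_1-\sigma_0]$ with fluctuations $O(k^{1/2})$ and with exponentially small probability of deviating by more than $O(k^{1/2}\log k)$, say. Second, write $\psi_k = \sum_{j=1}^{N(k)} Z_j + R_k$, where $Z_j$ is the $j$-th excursion sum and $R_k$ is the contribution of the final incomplete excursion (plus the initial transient from $Y_0$ to the first regeneration); the boundedness of $\nu$ and the exponential tail on excursion lengths give $|R_k| = O_p(\log k)$, in fact $\mathbb P(|R_k| \ge t) $ decays exponentially in $t$, so $R_k$ is negligible at scale $k^{1/2}$ with room to spare. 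Third, apply the Berry--Esseen theorem to the i.i.d.\ sum $\sum_{j=1}^{m} Z_j$ for each fixed $m$: since the $Z_j$ have mean $0$, some variance $\varsigma^2 > 0$, and a finite third moment (indeed all moments, by the exponential tail), we get $\sup_z |\mathbb P(\sum_{j=1}^m Z_j \le z) - \Phi_{0,m\varsigma^2}(z)| = O(m^{-1/2})$. Combining these --- conditioning on $N(k)$, substituting $m = N(k)$, absorbing $R_k$, and using the Lipschitz continuity of the Gaussian c.d.f.\ to trade the random $N(k)\varsigma^2$ for its deterministic proxy $k\varsigma^2/\mathbb E[\sigma_1-\sigma_0]$ at a cost of $O(k^{-1/2})$ --- yields $\sup_z |\mathbb P(\psi_k \le z) - \Phi(z k^{-1/2}/c)| = O(k^{-1/2})$ for the appropriate constant $c$. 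One then checks that the limiting variance $c^2$ equals $\frac{2p}{1-p}$; this is a one-line computation of $\varsigma^2/\mathbb E[\sigma_1-\sigma_0]$ (equivalently, the Markov-chain CLT variance $\sum_{t\in\ZZ}\mathbb E_\pi[\nu(Y_0)\nu(Y_t)]$, which can be evaluated using the explicit eigenstructure of $P$ in \eqref{eq:P_matrix}), and it should come out exactly to $\frac{2p}{1-p}$, matching the definition of $\Phi_p$.

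An alternative, possibly cleaner, route avoids regeneration entirely: decompose $\nu(Y_t) = g(Y_t) - (Pg)(Y_{t-1}) + (\text{martingale increment})$ via the solution $g$ of the Poisson equation $(I-P)g = \nu$ (which exists since $\nu$ is centered against $\pi$), so that $\psi_k = M_k + (\text{telescoping boundary terms of size }O(1))$ with $M_k$ a martingale with bounded, stationary-conditional-variance increments; then apply a martingale Berry--Esseen bound (e.g.\ the Heyde--Brown / Bolthausen estimate). Either way, the required inputs are the same: centering of $\nu$, a spectral gap / exponential mixing for the four-state chain (immediate since $P$ is a fixed primitive stochastic matrix), and a finite third moment.

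The main obstacle is the bookkeeping around replacing the \emph{random} number of summands $N(k)$ by a deterministic count while keeping the error at $O(k^{-1/2})$ \emph{uniformly in} $z$: one must bound $\sup_z|\mathbb P(\sum_{j\le N(k)} Z_j \le z) - \mathbb P(\sum_{j\le k/\mu} Z_j \le z)|$ where $\mu = \mathbb E[\sigma_1-\sigma_0]$, and a crude coupling of the two sums loses a factor through the $O(k^{1/2})$ fluctuation of $N(k)$ times the $O(k^{-1/2})$ density bound, which is only $O(1)$ --- not good enough. The fix is to use an Anscombe-type argument: condition on the sigma-field up to time $k$ so that $N(k)$ is known, note that the \emph{number of excess/deficit} excursions is $O(k^{1/2})$ with exponential tails and each contributes an increment of bounded variance, hence the difference between $\sum_{j\le N(k)} Z_j$ and $\sum_{j \le k/\mu} Z_j$ is, conditionally, a sum of $O(k^{1/2})$ centered bounded-variance terms, which has standard deviation $O(k^{1/4})$; then a two-scale argument (the main term has a density bounded by $O(k^{-1/2})$ on scale $k^{1/2}$, and a perturbation of size $O(k^{1/4})$ shifts the c.d.f.\ by $O(k^{-1/4})$ \dots still not $O(k^{-1/2})$). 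To genuinely get $O(k^{-1/2})$ one should instead invoke a Berry--Esseen theorem for stopped random walks / for sums with a random number of terms (of Anscombe--Berry--Esseen type, as in the literature on renewal theory), or --- most robustly --- take the martingale route above and cite a martingale Berry--Esseen theorem directly, which sidesteps the random-index issue altogether. I expect the cleanest writeup uses the Poisson-equation/martingale decomposition and a black-box martingale Berry--Esseen bound, reducing the whole lemma to verifying the bounded-increment and variance hypotheses and computing that the asymptotic variance is $\frac{2p}{1-p}$.
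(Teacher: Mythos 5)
Your final recommendation---reduce the lemma to a black-box Berry--Esseen theorem for functionals of a finite, irreducible, aperiodic Markov chain (you name Bolthausen), after checking that $\nu$ is centered against the uniform stationary distribution and computing the asymptotic variance $\tfrac{2p}{1-p}$---is exactly what the paper does: it cites the Markov-chain CLT of Trevezas--Limnios for the mean and the variance formula $\sigma^2=\widetilde{\boldsymbol{\nu}}(2Z-I_4)\widetilde{\boldsymbol{\nu}}^\top$, and then invokes Bolthausen's Theorem~1 for the quantitative $O(k^{-1/2})$ rate. Your lengthy excursion through the regeneration/random-index route is a detour whose difficulty you correctly diagnose and then correctly abandon, so the proposal as ultimately stated is sound and essentially coincides with the paper's proof.
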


\begin{proof}
The central limit theorem for Markov chains (see \cite{TL}) tells us that the random variable $\frac{1}{\sqrt{k}}\psi_k$ converges in distribution to a normal distribution with some mean $\mu$ and some variance $\sigma^2$. The mean $\mu$ is simply the expected value of $\nu$ with respect to the stationary distribution of ${\bf Y}$. This stationary distribution is uniform, so \[\mu=\frac{1}{4}\nu(\N\N)+\frac{1}{4}\nu(\N\E)+\frac{1}{4}\nu(\E\N)+\frac{1}{4}\nu(\E\E)=0.\] 

Trevezas and Limnios \cite{TL} developed an efficient menthod to compute the variance $\sigma^2$. Following their notation, we let $\Pi$ be the $4\times 4$ matrix each row of which is given by the stationary distribution of ${\bf Y}$; in our setting, $\Pi$ is the $4\times 4$ matrix whose entries are all equal to $\frac{1}{4}$. Let $\Pi_{dg}$ be the matrix obtained from $\Pi$ by changing every off-diagonal entry to $0$; then $\Pi_{dg}=\frac{1}{4}I_4$, where $I_4$ is the $4\times 4$ identity matrix. Let $Z=(I_4-P+\Pi)^{-1}$, where $P$ is as in \eqref{eq:P_matrix}. Let $\widetilde{\boldsymbol{\nu}}=(\nu(\N\N),\nu(\N\E),\nu(\E\N),\nu(\E\E))=(0,-2,2,0)$, and let $\widetilde{\boldsymbol{\nu}}^\top$ be the transpose of $\widetilde{\boldsymbol{\nu}}$. According to \cite[Proposition~2]{TL}, we have 
\[\sigma^2=\widetilde{\boldsymbol{\nu}}(2Z-I_4)\widetilde{\boldsymbol{\nu}}^\top;\] from this, we easily compute that 
\[\sigma^2=\frac{2p}{1-p}.\] 

We have shown that $\mathbb P(\psi_k\leq z)-\Phi_p(zk^{-1/2})\to 0$ as $k\to\infty$. The explicit quantitative bound in the statement of the lemma is a direct consequence of \cite[Theorem~1]{Bolthausen}, which is a version of the Berry--Esseen theorem for Markov chains. (The hypotheses of \cite[Theorem~1]{Bolthausen} include four mild conditions on the Markov chain ${\bf Y}$ and the functional $\nu$, but each one is routine to check.) 
\end{proof} 

For each nonnegative real number $z$, let $\Psi_z=\inf\{k\geq 0:\psi_k\geq z\}$. 

\begin{lemma}\label{lem:Psi}
For all $z\geq 0$, we have 
\[\mathbb P(\Psi_z\leq k)=2-2\Phi_p(zk^{-1/2})+O(k^{-1/6}).\]  
\end{lemma}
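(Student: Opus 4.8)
The plan is to relate the event $\{\Psi_z \le k\}$ to the distribution of $\psi_k$ via a reflection-type argument, since $\psi_k = \sum_{t=1}^k \nu(Y_t)$ behaves asymptotically like a random walk with steps in $\{-2,0,2\}$ that is (by \cref{lem:Q_constant}) approximately Gaussian at scale $\sqrt{k}$. The naive heuristic is the classical identity $\mathbb P(\Psi_z \le k) \approx 2\,\mathbb P(\psi_k \ge z)$ coming from the reflection principle: a path that reaches level $z$ by time $k$ either ends at or above $z$, or ends below $z$, and the latter contributes (reflected) the same probability as the former. Then $2\,\mathbb P(\psi_k \ge z) = 2 - 2\,\mathbb P(\psi_k < z) \approx 2 - 2\Phi_p(zk^{-1/2})$ by \cref{lem:Q_constant}. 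The issue is that $\psi_k$ is not a genuine random walk with i.i.d.\ increments — it is a functional of the Markov chain ${\bf Y}$ — so the reflection principle does not apply verbatim, and we only have distributional (not pathwise) control.

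The cleanest rigorous route avoids reflection entirely and uses a first-passage decomposition combined with the CLT bound. First I would establish the upper bound $\mathbb P(\Psi_z \le k) \le 2\,\mathbb P(\psi_{k} \ge z) + (\text{error})$: condition on $\Psi_z = m$ for $m \le k$; at the first passage time the chain is in some state $\XX \in \Omega$, and from the strong Markov property the remaining increment $\psi_k - \psi_m$ over the next $k-m$ steps, started from $\XX$, has probability at least $1/2 - o(1)$ of being $\ge -O(1)$ (here one needs a symmetry argument showing the increment distribution is roughly symmetric about $0$ regardless of starting state — this follows from the symmetric structure of $P$ and $\nu$, analogous to \cref{lem:half}). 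Summing over $m$ gives $\mathbb P(\psi_k \ge z - O(1)) \ge (1/2 - o(1))\,\mathbb P(\Psi_z \le k)$, and then replacing $z - O(1)$ by $z$ costs only $O(k^{-1/2})$ by \cref{lem:Q_constant} (the density of $\Phi_p$ is bounded, so shifting the argument by $O(1)$ changes $\Phi_p(\cdot k^{-1/2})$ by $O(k^{-1/2})$). For the matching lower bound, observe $\{\psi_k \ge z\} \subseteq \{\Psi_z \le k\}$ trivially, giving $\mathbb P(\Psi_z \le k) \ge \mathbb P(\psi_k \ge z) = 1 - \Phi_p(zk^{-1/2}) + O(k^{-1/2})$; to push this up to $2 - 2\Phi_p$ one symmetrizes again, using that conditioned on having passed level $z$, the walk is roughly equally likely to end up above or below $z$.

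The weaker error term $O(k^{-1/6})$ in the statement (versus the $O(k^{-1/2})$ one might hope for) strongly suggests the actual proof optimizes a truncation parameter: one picks a cutoff $L$ (to be optimized, presumably $L \asymp k^{1/3}$), handles the ``overshoot'' at the first passage time — the amount by which $\psi_{\Psi_z}$ exceeds $z$, which is $O(L)$ with high probability since steps are bounded — and controls the probability that the chain wanders by more than $L$ using \cref{lem:veer} or a direct Chernoff/\cref{lem:Chernoff}-style estimate, which contributes an error like $\exp(-c L^2/k)$ or a polynomial tail. Balancing $L/\sqrt{k}$ (the error from replacing $z$ by $z + O(L)$ via \cref{lem:Q_constant}, which is really $L \cdot k^{-1/2}$... no, $O(k^{-1/2})$ per unit shift gives $O(Lk^{-1/2})$) against $k^{-1/2}$ from the Berry--Esseen bound and against the wandering probability yields the $k^{-1/6}$ rate when $L \asymp k^{1/3}$.

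The main obstacle, as I see it, is the lack of a pathwise reflection principle: every step of the argument that "wants" reflection must instead be replaced by a conditioning-and-symmetry argument invoking the strong Markov property plus the symmetry of $(P, \nu)$, and one must be careful that conditioning on $\{\Psi_z \le k\}$ or on $\{\Psi_z = m\}$ does not destroy the symmetry needed for the post-passage increment. The overshoot control and the optimization of the truncation level $L$ are the technical heart; everything else is bookkeeping with the Berry--Esseen estimate from \cref{lem:Q_constant}.
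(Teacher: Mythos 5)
Your proposal is correct and follows essentially the same route as the paper: condition on the first passage time, use the (strong Markov plus symmetry) fact that the post-passage increment ends up above $z$ with probability $\tfrac12+o(1)$, and control the resulting window with the Berry--Esseen bound of \cref{lem:Q_constant} and the Lipschitz property of $\Phi_p$, with a buffer of size $k^{1/3}$ producing the $O(k^{-1/6})$ error. The paper implements this by looking ahead to time $k'=k+\lfloor k^{1/3}\rfloor$ and writing $\mathbb P(\psi_{k'}\geq z,\ \Psi_z\leq k)=\mathbb P(\psi_{k'}\geq z)-\mathbb P(\psi_{k'}\geq z,\ \Psi_z>k)$, which is exactly the sandwich you describe; no \cref{lem:veer}-type wandering estimate is needed, since the two competing errors are the CLT error $k^{-1/6}$ over the look-ahead window and the window-width cost $k^{1/3}\cdot k^{-1/2}$.
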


\begin{proof}
Let $k'=k+\left\lfloor k^{1/3}\right\rfloor$.
Consider a positive integer $k''\leq k$, and note that $\psi_{k'}-\psi_{k''}$ has the same distribution as $\psi_{k'-k''}$. If $\Psi_z=k''$, then $|\psi_{k''}-z|\leq 2$, so it follows from \cref{lem:Q_constant} that $\mathbb P(\psi_{k'}\geq z)=\frac{1}{2}+O((k'-k'')^{-1/2})=\frac{1}{2}+O(k^{-1/6})$. This shows that \[\mathbb P(\psi_{k'}\geq z\mid\Psi_z\leq k)=\frac{1}{2}+O(k^{-1/6}),\] so 
\begin{align}\label{eq:hello}
\nonumber \mathbb P(\Psi_z\leq k)&=(2+O(k^{-1/6}))\mathbb P(\psi_{k'}\geq z\text{ and }\Psi_z\leq k) \\ &=(2+O(k^{-1/6}))\left[\mathbb P(\psi_{k'}\geq z)-\mathbb P(\psi_{k'}\geq z\text{ and }\Psi_z>k)\right]. 
\end{align} 

If $\psi_{k'}\geq z$ and $\Psi_z>k$, then we must have \[z>\psi_k=\psi_{k'}-\sum_{k<t\leq k'}\nu(Y_t)\geq \psi_{k'}-2k^{1/3}\geq z-2k^{1/3}.\] Let $\Phi_p'$ denote the derivative of $\Phi_p$. Appealing to \cref{lem:Q_constant}, we find that 
\begin{align*}
\mathbb P(\psi_{k'}\geq z\text{ and }\Psi_z>k)&\leq\mathbb P(z-2k^{1/3}\leq\psi_k<z) \\ 
&=\mathbb P(\psi_k<z)-\mathbb P(\psi_k<z-2k^{1/3}) \\ &=\Phi_p(zk^{-1/2})-\Phi_p(zk^{-1/2}-2k^{-1/6})+O(k^{-1/2}) \\ &=O(\Phi_p'(zk^{-1/2})k^{-1/6})+O(k^{-1/2}) \\&=O(k^{-1/6}),
\end{align*} 
where we have used the fact that $\Phi_p$ is Lipschitz. By \eqref{eq:hello} and \cref{lem:Psi}, we have 
\begin{align*}
\mathbb P(\Psi_z\leq k)&=(2+O(k^{-1/6}))\mathbb P(\psi_{k'}\geq z)+O(k^{-1/6}) \\ &=(2+O(k^{-1/6}))(1-\Phi_p(z(k')^{-1/2})+O((k')^{-1/2}))+O(k^{-1/6}) \\ &=2-2\Phi_p(zk^{-1/2})+O(k^{-1/6}). \qedhere
\end{align*} 
\end{proof}

We now return to considering the pipes $\pp$ and $\pp'$ and their first kiss.  

\begin{lemma}\label{lem:Theart}
We have \[\mathbb P(\TT_\heartsuit(\pp,\pp')\leq k)=2-2\Phi_p(\hh_{\pp,\pp'}(0) k^{-1/2})+O(k^{-1/6}).\] 
\end{lemma}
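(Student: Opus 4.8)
The plan is to reduce the statement about the first kiss $\TT_\heartsuit(\pp,\pp')$ directly to \cref{lem:Psi} by coupling the increment process $(\hh_{\pp,\pp'}(t))_{t\geq 0}$ with the random walk $(\psi_k)_{k\geq 0}$ driven by the Markov chain ${\bf Y}$, up until the first kiss. The key observation is that as long as $t<\TT_\heartsuit(\pp,\pp')$, the two pipes occupy distinct boxes, so their trajectories are conditionally independent given the tile fillings already seen, and the discussion preceding \cref{lem:Q_constant} shows that $(\ff_t)_{t\geq 0}$ evolves exactly like the Markov chain ${\bf Y}$ with $\hh_{\pp,\pp'}(t+1)-\hh_{\pp,\pp'}(t)=\nu(\ff_{t+1})$. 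Thus, if we run ${\bf Y}$ started from the state $\ff_0$ and couple it with the pipes, then $\hh_{\pp,\pp'}(t)=\hh_{\pp,\pp'}(0)+\psi_t$ for all $t\leq\TT_\heartsuit$.

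First I would set $z=\hh_{\pp,\pp'}(0)$ and observe that, because $\hh_{\pp,\pp'}$ changes by steps in $\{-2,0,2\}$ and starts at the nonnegative value $z$, a kiss (i.e. $\hh_{\pp,\pp'}(t)=0$) happens at time $t$ if and only if $\psi_t\leq -z$ for the first time. Wait — one must be careful about the sign convention: $\hh_{\pp,\pp'}(0)\ge 0$ and a kiss means $\hh_{\pp,\pp'}$ hits $0$, so $\TT_\heartsuit$ is the first time $\psi_t = -z$. By the symmetry of $\nu$ (the chain ${\bf Y}$ and functional $\nu$ are invariant under swapping the roles of $\pp$ and $\pp'$, which negates $\nu$), the first passage of $\psi$ to $-z$ has the same distribution as the first passage to $z$, namely $\Psi_z$. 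Here I need to also handle the parity issue: $\psi_t$ has the same parity as... actually $\nu$ takes even values so $\psi_t$ is always even, and $z=\hh_{\pp,\pp'}(0)$ is an integer that may be odd or even; but $\dd_{\pp'}(0)-\dd_{\pp}(0)$ — since the starting boxes of two pipes in a pipe dream differ, and in the shape $\mathscr R$ two pipes start adjacent along the line $x+y=c$, consecutive starting diagonals differ by... I should check that $z$ is always even, or else replace $z$ by the effective barrier and note $\Psi_z = \Psi_{z'}$ for the relevant $z'$; since \cref{lem:Psi} is stated for all real $z\ge 0$ and $\Phi_p$ is continuous, the exact parity does not affect the asymptotic formula, so I would simply invoke \cref{lem:Psi} with the value $z=\hh_{\pp,\pp'}(0)$ and absorb any $O(1)$ discrepancy in the barrier into the error term using the Lipschitz property of $\Phi_p$ (noting $\Phi_p(zk^{-1/2})-\Phi_p((z\pm 2)k^{-1/2})=O(k^{-1/2})$).

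Putting these together: $\mathbb P(\TT_\heartsuit(\pp,\pp')\leq k)=\mathbb P(\Psi_z\leq k)=2-2\Phi_p(zk^{-1/2})+O(k^{-1/6})$ by \cref{lem:Psi}, which is the claim. The main obstacle is making the coupling argument airtight: I must argue carefully that conditioning on $\{t<\TT_\heartsuit\}$ does not distort the law of the next step of $(\ff_t)$, i.e. that the process $(\ff_t)_{t< \TT_\heartsuit}$ really is a stopped copy of ${\bf Y}$. This is true because the tiles governing $\pp$'s next move and $\pp'$'s next move, at any time before the first kiss, are \emph{distinct} boxes whose fillings are independent of everything revealed so far and of each other — so the event $\{\TT_\heartsuit > t\}$ is measurable with respect to the past and the two fresh tiles are still fair coins. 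I would phrase this as: the pair process $(\ff_t)$ is a Markov chain with transition matrix $P$ up to the stopping time $\TT_\heartsuit$, hence $\TT_\heartsuit$ coincides in law with $\Psi_{\hh_{\pp,\pp'}(0)}$ for the chain ${\bf Y}$ (equivalently, with the first passage of $\psi$ below $-\hh_{\pp,\pp'}(0)$, which by the $\pp\leftrightarrow\pp'$ symmetry equals $\Psi_{\hh_{\pp,\pp'}(0)}$ in distribution). Everything else is a direct citation of \cref{lem:Psi}.
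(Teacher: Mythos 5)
Your proposal is correct and follows the paper's proof exactly: the paper also deduces the lemma immediately from Lemma~\ref{lem:Psi} via the observation that $\TT_\heartsuit(\pp,\pp')$ has the same distribution as $\Psi_{\hh_{\pp,\pp'}(0)}$. You simply spell out the details of that distributional identity (independence of the tiles before the first kiss, the sign-flip symmetry converting first passage to $-z$ into $\Psi_z$, and the parity of $\hh_{\pp,\pp'}(0)$), all of which the paper leaves implicit.
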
 

\begin{proof}
This follows immediately from \cref{lem:Psi} and the observation that $\TT_\heartsuit(\pp,\pp')$ has the same distribution as $\Psi_{\hh_{\pp,\pp'}(0)}$. 
\end{proof}

\section{Serrated Shapes}\label{sec:serrated} 

Let us say that a shape $\bT$ of rank $n-1$ is \dfn{serrated} if there is an integer $c$ such that 
\[\{\fb(x,y):x,y\in\ZZ,\,x+y=c,\,1\leq x-y\leq n-1\}\subseteq\bT\] and $\bT$ does not contain any box $\fb(x,y)$ with $x+y<c$ (note that this definition depends on $n$). The name comes from the appearance of the southwest boundary of the shape (see \cref{fig:serrate}, in which the boxes shaded in periwinkle form a serrated shape). For each $n\geq 2$, fix an order-convex serrated shape $\bT^{(n)}$ of rank $n-1$, and let $\NN_n=\max_{i\in[n-1]}\occ_i(\bT^{(n)})$. Assume that \begin{equation}\label{eq:MM}
n^{2\varepsilon}<\NN_n=o\left(n^2/\log^{1+\varepsilon} n\right).
\end{equation} This implies that $\sqrt{\NN_n\log^{1+\varepsilon}\NN_n}=o(n)$. 

Consider a random pipe dream $\PD_p(\bT^{(n)})$. As before, let $\bb_{\qq}(t)$ be the box containing a pipe $\qq$ at time $t$, and let $\dd_{\qq}(t)=\dd(\bb_{\qq}(t))$. Let $R_{\qq}$ be the largest integer $t$ such that $\bb_{\qq}(t)$ exists; that is, $R_{\qq}+1$ is the number of boxes that intersect $\qq$. Because $\bT^{(n)}$ is order-convex and serrated, it is straightforward to check that  
\begin{equation}\label{eq:RO}
R_{\qq}\leq 2\NN_n.
\end{equation} For distinct pipes $\qq,\qq'$, let $\hh_{\qq,\qq'}(t)=\dd_{\qq'}(t)-\dd_{\qq}(t)$. As before, we say that $\qq$ and $\qq'$ \emph{kiss} at time $t$ if $\bb_{\qq}(t)=\bb_{\qq'}(t)$. Let $\TT_{\heartsuit}(\qq,\qq')=\inf\{t\geq 0:\bb_{\qq}(t)=\bb_{\qq'}(t)\}$ be the first kiss of $\qq$ and $\qq'$. 

Throughout this section, fix pipes $\pp,\pp'$ in $\PD_p(\bT^{(n)})$. Say $\pp$ and $\pp'$ are the $j$-th and $j'$-th pipes, respectively, in $\PD_p(\bT^{(n)})$ (recall that we number pipes from northwest to southeast according to their starting points). 
Let $m=\occ_{\dd_\pp(0)}(\bT^{(n)})$. Let us assume that $m>n^{2\varepsilon}$ and that 
\begin{equation}\label{eq:10}
3\textstyle{\sqrt{\NN_n\log^{1+\varepsilon} \NN_n}}<\dd_\pp(0)<n-3\textstyle{\sqrt{\NN_n\log^{1+\varepsilon} \NN_n}}.
\end{equation}  
If $(\pp,\pp')$ is an inversion, then $\TT_\heartsuit(\pp,\pp')\leq R_\pp$. 

Throughout this section, we tacitly make use of the obvious fact that the trajectory of a pipe in $\bT^{(n)}$ (before it hits the northeast boundary) has the same distribution as it would have if $\bT^{(n)}$ were extended to the infinite shape $\mathscr{R}$ from \cref{sec:concentration}. That is, the trajectory of the pipe is not affected by the northeast boundary of $\bT^{(n)}$ until the pipe actually reaches the northeast boundary. This allows us to apply results that we established for infinite shapes in \cref{sec:concentration,sec:distance}, so long as we can ignore the cases where pipes touch the northwest or southeast boundary of $\bT^{(n)}$. As in \cref{lem:veer}, let 
\[T_\pp(z)=\inf\{t\geq 0:|\dd_\pp(t)-\dd\pp(0)|\geq z\}.\]

\begin{lemma}\label{lem:touch_boundary}
The probability that $(\pp,\pp')$ is an inversion and either $\pp$ or $\pp'$ touches the northwest or southeast boundary of $\bT^{(n)}$ is $O(n^{-100})$. 
\end{lemma}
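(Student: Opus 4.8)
The plan is to show that the bad event in question is a subset of an event about a single pipe veering far from its starting diagonal, and then to apply the concentration inequality \cref{lem:veer}. The key geometric observation is the following: by the definition of a serrated shape, both $\pp$ and $\pp'$ start on the line $x+y=c$, and the northwest boundary of $\bT^{(n)}$ lies along (a sawtooth around) the line $x-y=0$ while the southeast boundary lies along the line $x-y=n-1$. For a pipe $\qq$ to touch the northwest boundary, its diagonal coordinate $\dd_\qq(t)$ must drop to near $0$; for it to touch the southeast boundary, $\dd_\qq(t)$ must rise to near $n-1$. Since we are in the regime \eqref{eq:10}, both $\dd_\pp(0)$ and $\dd_{\pp'}(0)$ (the latter differs from the former by $\hh_{\pp,\pp'}(0)$, which I will first argue is small on the relevant event) are at least $3\sqrt{\NN_n\log^{1+\varepsilon}\NN_n}$ away from $0$ and from $n-1$. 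So touching either boundary forces $|\dd_\qq(t)-\dd_\qq(0)|\geq 2\sqrt{\NN_n\log^{1+\varepsilon}\NN_n}$, say, for some $t$.

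First I would handle the starting-point issue: if $(\pp,\pp')$ is an inversion then $\TT_\heartsuit(\pp,\pp')\leq R_\pp\leq 2\NN_n$ by \eqref{eq:RO}, and in particular the pipes kiss, so $\dd_{\pp'}$ and $\dd_\pp$ agree at some time $t\leq 2\NN_n$; combined with \cref{lem:veer} applied to each pipe with $z$ on the order of $\sqrt{\NN_n\log^{1+\varepsilon}\NN_n}$ and $r=2\NN_n$, this already shows $\hh_{\pp,\pp'}(0)=O(\sqrt{\NN_n\log^{1+\varepsilon}\NN_n})$ except on an event of probability $O(n^{-100})$ (the exponent $-C_2 z^2/(z+r)$ becomes $-C_2\cdot\Theta(\NN_n\log^{1+\varepsilon}\NN_n)/\Theta(\NN_n)=-\Theta(\log^{1+\varepsilon}\NN_n)$, and since $\NN_n>n^{2\varepsilon}$ this is $-\Theta(\log^{1+\varepsilon}n)$, which beats any polynomial). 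So off a negligible event, the starting point of $\pp'$ is also at distance $\geq 2\sqrt{\NN_n\log^{1+\varepsilon}\NN_n}$ from $0$ and from $n-1$.

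Next, on the event that $(\pp,\pp')$ is an inversion, each of $\pp$ and $\pp'$ has a bounded lifetime: $R_\pp,R_{\pp'}\leq 2\NN_n$ by \eqref{eq:RO} (noting $R_{\pp'}\le 2\NN_n$ as well since the bound holds for every pipe). If in addition $\pp$ or $\pp'$ touches the northwest or southeast boundary, then by the geometric observation above, $T_\qq(z)\leq 2\NN_n$ for that pipe $\qq\in\{\pp,\pp'\}$, with $z=2\sqrt{\NN_n\log^{1+\varepsilon}\NN_n}-O(\sqrt{\NN_n\log^{1+\varepsilon}\NN_n})\geq\sqrt{\NN_n\log^{1+\varepsilon}\NN_n}$ (the $O$-term being the slack we just lost controlling $\hh_{\pp,\pp'}(0)$; adjusting the constant $3$ in \eqref{eq:10} absorbs it). Applying \cref{lem:veer} with $r=2\NN_n$ gives
\[
\mathbb P(T_\qq(z)\leq 2\NN_n)<C_1\exp\!\left(-C_2\frac{z^2}{z+2\NN_n}\right)\leq C_1\exp\!\left(-C_2\frac{\NN_n\log^{1+\varepsilon}\NN_n}{\NN_n+2\NN_n+\sqrt{\NN_n}}\right),
\]
and since $\log^{1+\varepsilon}\NN_n\gg\log^{1+\varepsilon}(n^{2\varepsilon})=\Theta(\log^{1+\varepsilon}n)$, the right-hand side is $O(n^{-100})$. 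Taking a union bound over $\qq\in\{\pp,\pp'\}$ and over the two possible boundaries, together with the negligible event from the previous paragraph, yields the claimed $O(n^{-100})$ bound.

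The main obstacle is bookkeeping rather than conceptual: one must be careful that the various $O(\cdot)$ slacks in the diagonal displacement (from the Berry–Esseen-type error terms is not an issue here, but from the crude $\hh_{\pp,\pp'}(0)$ bound it is) stay comfortably below the $3\sqrt{\NN_n\log^{1+\varepsilon}\NN_n}$ buffer in \eqref{eq:10}, and that the exponent in \cref{lem:veer} genuinely beats $n^{-100}$ for the whole range $n^{2\varepsilon}<\NN_n=o(n^2/\log^{1+\varepsilon}n)$ — the lower bound $\NN_n>n^{2\varepsilon}$ is exactly what makes $\log^{1+\varepsilon}\NN_n$ comparable to $\log^{1+\varepsilon}n$, and the upper bound guarantees $\sqrt{\NN_n\log^{1+\varepsilon}\NN_n}=o(n)$ so that the buffer fits inside $[1,n-1]$ in the first place.
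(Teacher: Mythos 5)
Your overall strategy is the paper's: combine the buffer in \eqref{eq:10}, the lifetime bound \eqref{eq:RO}, and the concentration inequality of \cref{lem:veer}. However, there is a genuine gap in how you invoke \cref{lem:veer}. That lemma is proved for a pipe in the infinite shape $\mathscr{R}$, and (as the paper emphasizes just before this lemma) a pipe in $\bT^{(n)}$ has the same trajectory distribution only until it first touches the northwest or southeast boundary; after that it is deflected by the boundary elbows and the coupling with $\mathscr{R}$ breaks. Since ``touching the boundary'' is precisely the event being bounded, you cannot apply \cref{lem:veer} to a pipe over the whole window $[0,2\NN_n]$ without first arguing that the relevant excursion is completed before any boundary contact. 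For $\pp$ this is easily patched: \eqref{eq:10} places $\dd_\pp(0)$ at distance more than $3\sqrt{\NN_n\log^{1+\varepsilon}\NN_n}$ from the boundary diagonals, so the first time $\pp$ veers by $\sqrt{\NN_n\log^{1+\varepsilon}\NN_n}$ necessarily precedes its first boundary touch, and \cref{lem:veer} legitimately bounds that stopped event. For $\pp'$ it is not: nothing constrains $\dd_{\pp'}(0)$, so $\pp'$ may start adjacent to (or on) the southeast boundary and touch it with probability $\Theta(1)$ before making any large diagonal excursion. Your first step --- ``$\hh_{\pp,\pp'}(0)=O(\sqrt{\NN_n\log^{1+\varepsilon}\NN_n})$ off an $O(n^{-100})$ event, by \cref{lem:veer} applied to each pipe'' --- is therefore circular exactly in the case that matters, and your second step inherits the problem because the claim $T_{\pp'}(z)\leq 2\NN_n$ relies on that control of $\hh_{\pp,\pp'}(0)$.

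The missing idea (and the paper's fix) is to only ever apply \cref{lem:veer} to a time interval during which the pipe provably does not touch the boundary: if $(\pp,\pp')$ is an inversion and some $\qq\in\{\pp,\pp'\}$ touches the boundary, then there exist times $0\leq t<t'\leq 2\NN_n$ and a pipe $\qq$ with $|\dd_\qq(t')-\dd_\qq(t)|>\sqrt{\NN_n\log^{1+\varepsilon}\NN_n}$ and no boundary contact on $[t,t']$ (for a $\pp'$ starting near the southeast boundary, take $t$ to be its last boundary touch before the first kiss and $t'$ the kiss time, using that $\dd_\pp$ at the kiss is still far from the boundary provided $\pp$ has not veered). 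One then applies \cref{lem:veer} to the piece of $\qq$ grown from time $t$, which is legitimately coupled to the infinite shape, and takes a union bound over the $O(\NN_n^2)$ choices of $(t,t',\qq)$, giving $O(\NN_n^2\cdot n^{-200})=O(n^{-100})$. With this repair, your exponent computations and the role of the bounds $n^{2\varepsilon}<\NN_n=o(n^2/\log^{1+\varepsilon}n)$ are correct.
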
 

\begin{proof}
If $\pp$ or $\pp'$ touches the northwest or southeast boundary of $\bT^{(n)}$, then by \eqref{eq:RO} and \eqref{eq:10}, there must be times $0\leq t<t'\leq 2\NN_n$ and a pipe $\qq\in\{\pp,\pp'\}$ such that $|\dd_{\qq}(t')-\dd_\qq(t)|>\sqrt{\NN_n\log^{1+\varepsilon}\NN_n}$ and such that $\qq$ does not touch the northwest or southeast boundary of $\bT^{(n)}$ between times $t$ and $t'$ (including at times $t$ and $t'$). For any fixed $t$ and $t'$, we know by \cref{lem:veer} (applied to the part of $\qq$ that starts at time $t$) and the fact that $t'-t\leq R_{\qq}\leq 2\NN_n$ that the probability of this occurring is at most \[C_1\exp\left(-C_2\frac{\NN_n\log^{1+\varepsilon} \NN_n}{\sqrt{\NN_n\log^{1+\varepsilon}\NN_n}+(t'-t)}\right)=O(\NN_n^{-100/\varepsilon})=O(n^{-200}).\] The desired result follows by taking a union bound over all possible choices of $t,t',\qq$. 
\end{proof}

\begin{lemma}\label{lem:pleasant1}
The probability that $(\pp,\pp')$ is an inversion and \[\max_{0\leq t\leq R_\pp}|\dd_\pp(t)-\dd_\pp(0)|>\textstyle{\sqrt{m\log^{1+\varepsilon} m}}\quad\text{or}\quad\max_{0\leq t\leq R_{\pp'}}|\dd_{\pp'}(t)-\dd_{\pp'}(0)|>\textstyle{\sqrt{m\log^{1+\varepsilon} m}}\] is $O(n^{-100})$. 
\end{lemma}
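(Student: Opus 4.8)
The plan is to bound the probability in the statement by a sum of a few pieces, each of which will turn out to be super-polynomially small in $n$ via \cref{lem:veer}. Throughout, set $z=\sqrt{m\log^{1+\varepsilon}m}$, and recall $d_0:=\dd_\pp(0)$, $d':=\dd_{\pp'}(0)$, $h:=\hh_{\pp,\pp'}(0)=d'-d_0\ge 0$, and $m=\occ_{d_0}(\bT^{(n)})>n^{2\varepsilon}$. The engine of the whole argument is the following \emph{rectangle estimate}, which uses only order-convexity and serratedness. Suppose a pipe $\qq$ whose starting box lies on diagonal $d\in[1,n-1]$ is still inside $\bT^{(n)}$ at time $t$, and that $\bb_\qq(t)$ differs from $\bb_\qq(0)$ by $a$ units east and $b$ units north, so $a+b=t$ and $|a-b|=|\dd_\qq(t)-d|$. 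The interval $[\bb_\qq(0),\bb_\qq(t)]$ is an $(a+1)\times(b+1)$ rectangle of boxes, exactly $\min(a,b)+1=\tfrac12\!\left(t-|\dd_\qq(t)-d|\right)+1$ of which have diagonal-value $d$; each of those lies between $\bb_\qq(0),\bb_\qq(t)\in\bT^{(n)}$ and has diagonal-value $d\in[1,n-1]$, hence lies in $\bT^{(n)}$ by order-convexity. Counting, $\tfrac12\!\left(t-|\dd_\qq(t)-d|\right)+1\le\occ_d(\bT^{(n)})$, so
\[t\le 2\occ_d(\bT^{(n)})+|\dd_\qq(t)-d|-2.\]
I record two consequences: if $\qq$ first attains distance $z'$ from $d$ at time $T_\qq(z')\le R_\qq$, then $T_\qq(z')<2\occ_d(\bT^{(n)})+z'$; and if $\qq$ never wanders more than $z'$ from $d$ while in $\bT^{(n)}$, then $R_\qq<2\occ_d(\bT^{(n)})+z'$. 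I will also use the fact, again immediate from order-convexity and serratedness, that $|\occ_i(\bT^{(n)})-\occ_{i+1}(\bT^{(n)})|\le 1$ for $1\le i\le n-2$, so that $\occ_{d'}(\bT^{(n)})\le m+h$.

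Write $W_\pp$ for the event that $\max_{0\le t\le R_\pp}|\dd_\pp(t)-d_0|>z$, define $W_{\pp'}$ analogously with $d'$, and let $\mathcal I$ be the event that $(\pp,\pp')$ is an inversion. Since $\mathcal I\cap(W_\pp\cup W_{\pp'})\subseteq W_\pp\cup(\mathcal I\cap W_{\pp'}\cap\overline{W_\pp})$, it suffices to bound $\mathbb P(W_\pp)$ and $\mathbb P(\mathcal I\cap W_{\pp'}\cap\overline{W_\pp})$. On $W_\pp$ we have $T_\pp(z)\le R_\pp$, so the rectangle estimate gives $T_\pp(z)<2m+z$; passing to the infinite extension $\mathscr R$ as in the proof of \cref{lem:touch_boundary} and invoking \cref{lem:veer}, we get $\mathbb P(W_\pp)\le\mathbb P(T_\pp(z)\le 2m+z)<C_1\exp\!\left(-C_2\,z^2/(2m+2z)\right)$.

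For the remaining term I split on the (deterministic) size of $h$. If $h\le 2z$, then $\occ_{d'}(\bT^{(n)})\le m+2z$, and on $W_{\pp'}$ the rectangle estimate applied to $\pp'$ gives $T_{\pp'}(z)<2(m+2z)+z$; hence $\mathbb P(\mathcal I\cap W_{\pp'}\cap\overline{W_\pp})\le\mathbb P(T_{\pp'}(z)\le 2m+5z)<C_1\exp\!\left(-C_2\,z^2/(2m+6z)\right)$. If instead $h>2z$, then on $\overline{W_\pp}$ the rectangle estimate gives $R_\pp<2m+z$, so on $\mathcal I$ the first kiss satisfies $\TT_\heartsuit(\pp,\pp')\le R_\pp<2m+z$; at time $\TT_\heartsuit$ we have $\dd_{\pp'}(\TT_\heartsuit)=\dd_\pp(\TT_\heartsuit)\in[d_0-z,d_0+z]$ on $\overline{W_\pp}$, and since $d'=d_0+h>d_0+2z$ this forces $d'-\dd_{\pp'}(\TT_\heartsuit)>z$; that is, $\pp'$ has already wandered more than $z$ from $d'$ by time $\TT_\heartsuit<2m+z$, so $\mathcal I\cap\overline{W_\pp}\subseteq\{T_{\pp'}(z)<2m+z\}$ and this probability is $<C_1\exp\!\left(-C_2\,z^2/(2m+2z)\right)$. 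Finally, because $m>n^{2\varepsilon}$ and $z=\sqrt{m\log^{1+\varepsilon}m}=o(m)$, for large $n$ each bound above is at most $C_1\exp\!\left(-\tfrac{C_2}{3}\,z^2/m\right)=C_1\exp\!\left(-\tfrac{C_2}{3}(\log m)^{1+\varepsilon}\right)$, which is $o(n^{-100})$ since $\log m\ge 2\varepsilon\log n$ and $(\log n)^\varepsilon\to\infty$. Summing the finitely many pieces yields the claim.

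The hard part is seeing the rectangle estimate: the bound $R_\qq\le 2\NN_n$ from \eqref{eq:RO} is far too crude here, and one must notice that order-convexity together with serratedness keeps a pipe from wandering distance $z$ off its starting diagonal $d$ without spending travel time at least twice the number of diagonal-$d$ boxes of $\bT^{(n)}$ it passes through---at most $2\occ_d(\bT^{(n)})+z$ in total. This is exactly what makes the smaller deviation radius $\sqrt{m\log^{1+\varepsilon}m}$ (rather than $\sqrt{\NN_n\log^{1+\varepsilon}\NN_n}$) still a super-polynomially rare event. The one extra wrinkle is the pipe $\pp'$, whose starting diagonal $d'$ could a priori carry many more boxes than $d_0$; one must either note that $\occ_{d'}(\bT^{(n)})=O(m)$ when $d'$ lies within $O(z)$ of $d_0$, or, when $d'$ is far from $d_0$, use $\mathcal I$ and $\overline{W_\pp}$ to force an early deviation of $\pp'$ itself.
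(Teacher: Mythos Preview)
Your argument is correct and rests on the same two pillars as the paper's proof: the order-convexity ``rectangle estimate'' (which the paper phrases as ``$\pp$ either travels at most $m$ steps north or at most $m$ steps east,'' yielding $|\dd_\pp(R_\pp)-\dd_\pp(0)|\geq R_\pp-2m$) together with \cref{lem:veer}. Your packaging is slightly slicker---you apply the rectangle estimate at time $T_\pp(z)$ to get $W_\pp\subseteq\{T_\pp(z)<2m+z\}$ in one step, whereas the paper separately bounds $\mathbb P(R_\pp>4m)$ and $\mathbb P(T_\pp(z)\leq 4m)$.

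The substantive difference is your treatment of $\pp'$. The paper simply asserts that ``an analogous argument'' gives $\mathbb P(W_{\pp'})=O(n^{-100})$, but this is not literally true: the analogous argument would use $m'=\occ_{d'}(\bT^{(n)})$ in place of $m$, and when $m'\gg m$ the bound $\mathbb P(T_{\pp'}(z)\leq 4m')$ from \cref{lem:veer} is no longer small (since $z^2/m'\to 0$). Your case split on $h$ is exactly what is needed here: when $h\leq 2z$ you use the Lipschitz property of $i\mapsto\occ_i$ to get $m'\leq m+2z$, and when $h>2z$ you use the inversion event together with $\overline{W_\pp}$ to force $\pp'$ itself to deviate by more than $z$ within time $<2m+z$. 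This genuinely fills a gap the paper glosses over.

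One small point: in the case $h>2z$, when you pass to the infinite shape $\mathscr R$ to invoke \cref{lem:veer} for $\pp'$, you need $\pp'$ not to have touched the southeast boundary of $\bT^{(n)}$ (otherwise the reflection there could accelerate $\pp'$'s drift toward $d_0$, and the coupling fails in the direction you need). Unlike for $\pp$ and for $\pp'$ when $h\leq 2z$, nothing in \eqref{eq:10} prevents $d'$ from being close to $n-1$ here. The fix is to invoke \cref{lem:touch_boundary} itself---not just its proof---at the outset, exactly as the paper does: that lemma says $\mathbb P(\mathcal I\cap\{\pp'\text{ touches the boundary}\})=O(n^{-100})$, after which your coupling is valid on the remaining event.
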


\begin{proof}
By \cref{lem:touch_boundary}, we may assume $\pp$ and $\pp'$ do not touch the northwest or southeast boundary of $\bT^{(n)}$. We will prove that \[\mathbb P\left(\max_{0\leq t\leq R_\pp}|\dd_\pp(t)-\dd_\pp(0)|>\textstyle{\sqrt{m\log^{1+\varepsilon} m}}\right)=O(n^{-100});\] an analogous argument shows that \[\mathbb P\left(\max_{0\leq t\leq R_{\pp'}}|\dd_{\pp'}(t)-\dd_{\pp'}(0)|>\textstyle{\sqrt{m\log^{1+\varepsilon} m}}\right)=O(n^{-100}).\]

If $\max_{0\leq t\leq R_\pp}|\dd_\pp(t)-\dd_\pp(0)|>\sqrt{m\log^{1+\varepsilon} m}$, then $T_\pp(\sqrt{m\log^{1+\varepsilon} m})\leq R_\pp$; hence, it suffices to show that $\mathbb P(R_\pp>4m)$ and $\mathbb P(T_\pp(\sqrt{m\log^{1+\varepsilon} m})\leq 4m)$ are both $O(n^{-100})$. Because $\bT^{(n)}$ is order-convex and $m=\occ_{\dd_\pp(0)}(\bT^{(n)})$, the pipe $\pp$ either travels at most $m$ steps north or travels at most $m$ steps east. Therefore, if $R_\pp>4m$, then $|\dd_\pp(R_\pp)-\dd_\pp(0)|\geq R_\pp-2m$. For each integer $a>4m$, we know by \cref{lem:veer} that 
\begin{align*}
\mathbb P(R_\pp=a)&\leq \mathbb P(T_\pp(a-2m)\leq a) \\  &\leq C_1\exp\left(-C_2\frac{(a-2m)^2}{2a-2m}\right) \\ &\leq C_1\exp\left(-C_2\frac{(a/2)^2}{2a}\right)\\ &= C_1\exp\left(-C_2\,a/8\right).
\end{align*}
It follows that \[\mathbb P(R_\pp>4m)=\sum_{a>4m}\mathbb P(R_\pp=a)=O(m^{-50/\varepsilon})=O(n^{-100}).\]
On the other hand, \cref{lem:veer} tells us that    
\[
\mathbb P\left(T_\pp\left(\textstyle{\sqrt{m\log^{1+\varepsilon} m}}\right)\leq 4m\right)  \leq C_1\exp\left(-C_2\frac{m\log^{1+\varepsilon} m}{\sqrt{m\log^{1+\varepsilon} m}+4m}\right) =O(m^{-50/\varepsilon})=O(n^{-100}), 
\] as desired. 
\end{proof}

Note that if $j'-j>2\sqrt{m\log^{1+\varepsilon} m}$, then it follows from \cref{lem:pleasant1} that the probability that $(\pp,\pp')$ is an inversion is $O(n^{-100})$.

\begin{lemma}\label{lem:Phip}
Let $\ell$ and $\ell'$ be quantities that are both $O(\sqrt{m\log^{1+\varepsilon} m})$. Then \[\Phi_p\left(\ell(2m+\ell')^{-1/2}\right)=\Phi_p\left(\ell(2m)^{-1/2}\right)+O\left((\log^{1+\varepsilon} m)m^{-1/2}\right).\] 
\end{lemma}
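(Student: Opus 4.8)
The plan is to prove this by a straightforward Taylor expansion argument, exploiting the explicit smoothness of $\Phi_p$. The key observation is that $\Phi_p$ is the CDF of a centered Gaussian, so its derivative $\Phi_p'$ is a Gaussian density, which is bounded (say $\Phi_p'(x)\leq M$ for all $x$, where $M$ depends only on $p$). Thus $\Phi_p$ is globally Lipschitz, and
\[\left|\Phi_p\left(\ell(2m+\ell')^{-1/2}\right)-\Phi_p\left(\ell(2m)^{-1/2}\right)\right|\leq M\left|\ell(2m+\ell')^{-1/2}-\ell(2m)^{-1/2}\right|.\]
So the problem reduces to estimating the difference of the arguments.

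First I would bound $|\ell|\ll\sqrt{m\log^{1+\varepsilon}m}$ and $|\ell'|\ll\sqrt{m\log^{1+\varepsilon}m}$, using the hypothesis. In particular, since $\log^{1+\varepsilon}m=o(m)$ (indeed $o(m^{\eta})$ for any $\eta>0$), we have $|\ell'|=o(m)$, so for $m$ large enough $2m+\ell'\geq m>0$ and both square roots are well defined and comparable to $\sqrt{m}$. Then I would write
\[\ell(2m+\ell')^{-1/2}-\ell(2m)^{-1/2}=\ell\cdot\frac{(2m)^{1/2}-(2m+\ell')^{1/2}}{(2m+\ell')^{1/2}(2m)^{1/2}}=\ell\cdot\frac{-\ell'}{(2m+\ell')^{1/2}(2m)^{1/2}\left((2m)^{1/2}+(2m+\ell')^{1/2}\right)},\]
rationalizing the numerator. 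The denominator is $\gg m^{3/2}$ (each of the three factors is $\gg m^{1/2}$), and the numerator is $|\ell\ell'|\ll m\log^{1+\varepsilon}m$. Hence the whole expression is
\[O\!\left(\frac{m\log^{1+\varepsilon}m}{m^{3/2}}\right)=O\!\left((\log^{1+\varepsilon}m)\,m^{-1/2}\right),\]
which, multiplied by the constant $M$, gives exactly the claimed error term.

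There is essentially no serious obstacle here; the only points requiring a little care are (i) confirming that $2m+\ell'$ stays positive and of order $m$ so that the square roots and the rationalization are legitimate — this is where the bound $\log^{1+\varepsilon}m=o(m)$ is used — and (ii) keeping track of the fact that $\ell,\ell'$ may be negative, so one should work with absolute values throughout and invoke the global (two-sided) Lipschitz bound on $\Phi_p$ rather than a one-sided monotonicity argument. Everything else is a routine manipulation of the elementary inequality $|\sqrt{a}-\sqrt{b}|=|a-b|/(\sqrt{a}+\sqrt{b})$.
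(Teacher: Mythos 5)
Your proof is correct and follows essentially the same route as the paper: both reduce the claim to bounding the difference of the arguments $\bigl|\ell(2m+\ell')^{-1/2}-\ell(2m)^{-1/2}\bigr|$ by $O\bigl((\log^{1+\varepsilon}m)\,m^{-1/2}\bigr)$ and then invoke the global Lipschitz property of $\Phi_p$. The only cosmetic difference is that you rationalize the difference of square roots where the paper uses the expansion $(1+O(\ell'/m))^{-1/2}=1+O(\ell'/m)$.
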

\begin{proof}
Let $\Phi_p'$ denote the derivative of $\Phi_p$. We compute that 
\begin{align*}
\Phi_p\left(\ell\left(2m+\ell'\right)^{-1/2}\right)&=\Phi_p\left(\ell (2m)^{-1/2}\left(1+O(\ell'/m)\right)\right)
\\ &=\Phi_p\left(\ell (2m)^{-1/2}+O\left((\log^{1+\varepsilon}m)m^{-1/2}\right)\right) \\ &=\Phi_p\left(\ell (2m)^{-1/2}\right)+O\left(\Phi_p'(\ell (2m)^{-1/2})(\log^{1+\varepsilon} m)m^{-1/2}\right) \\ &=\Phi_p\left(\ell(2m)^{-1/2}\right)+O\left((\log^{1+\varepsilon} m)m^{-1/2}\right),
\end{align*} 
where the last equality follows from the fact that $\Phi_p$ is Lipschitz. 
\end{proof} 

\begin{lemma}\label{lem:narrow_first_kiss}
The probability that $(\pp,\pp')$ is an inversion and $\TT_\heartsuit(\pp,\pp')>2m-6\textstyle{\sqrt{m\log^{1+\varepsilon} m}}$ is $O(m^{-1/6})$. 
\end{lemma}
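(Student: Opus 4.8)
The plan is to show that, conditioned on $(\pp,\pp')$ being an inversion, the first kiss $\TT_\heartsuit(\pp,\pp')$ is very unlikely to occur in the short terminal window $(2m-6\sqrt{m\log^{1+\varepsilon}m},\,R_\pp]$. The basic strategy is to compare the joint trajectory of $\pp$ and $\pp'$ to the Markov chain ${\bf Y}$ of \cref{sec:distance}, use \cref{lem:Theart} to pin down the distribution of the first kiss to leading order, and then observe that the probability mass $\Phi_p$ assigns to an interval of length $O(\sqrt{m\log^{1+\varepsilon}m})$ near $2m$ is small (of order $(\log^{1+\varepsilon}m)m^{-1/2}$ by \cref{lem:Phip}, or bounded more crudely), while any contribution from the error terms is also controlled. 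I would first dispose of the bad events using \cref{lem:touch_boundary} and \cref{lem:pleasant1}: up to an additive $O(n^{-100})$, we may assume neither pipe touches the northwest or southeast boundary and that both pipes stay within $\sqrt{m\log^{1+\varepsilon}m}$ of their starting diagonals for all relevant times. On this event the two pipes behave exactly as the pipes in the infinite shape $\mathscr R$, so \cref{lem:Theart} applies.

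Next I would bound $\mathbb P(\TT_\heartsuit(\pp,\pp')>2m-6\sqrt{m\log^{1+\varepsilon}m}$ and $\TT_\heartsuit(\pp,\pp')\le R_\pp)$. By the first note after \cref{lem:pleasant1}, we may also assume $j'-j\le 2\sqrt{m\log^{1+\varepsilon}m}$, so $\hh_{\pp,\pp'}(0)=O(\sqrt{m\log^{1+\varepsilon}m})$ (this uses that $\pp$ and $\pp'$ start on a common diagonal because $\bT^{(n)}$ is serrated, so their initial $\dd$-values differ by at most their index gap). Using \cref{lem:pleasant1} once more, if $(\pp,\pp')$ is an inversion then with error $O(n^{-100})$ we have $R_\pp\le 4m$ and in fact $R_\pp\le 2m+O(\sqrt{m\log^{1+\varepsilon}m})$ (since by order-convexity $\pp$ makes at most $m$ steps in one of the two directions, and its net horizontal displacement is small). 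Hence the event in question forces $\TT_\heartsuit(\pp,\pp')$ to lie in an interval $[2m-6\sqrt{m\log^{1+\varepsilon}m},\,2m+O(\sqrt{m\log^{1+\varepsilon}m})]$. Apply \cref{lem:Theart} at the two endpoints $k_1=2m-6\sqrt{m\log^{1+\varepsilon}m}$ and $k_2=2m+O(\sqrt{m\log^{1+\varepsilon}m})$ and subtract: the probability of landing in this window is
\[
\bigl(2-2\Phi_p(\hh_{\pp,\pp'}(0)k_2^{-1/2})\bigr)-\bigl(2-2\Phi_p(\hh_{\pp,\pp'}(0)k_1^{-1/2})\bigr)+O(m^{-1/6})
=2\Phi_p(\hh_{\pp,\pp'}(0)k_1^{-1/2})-2\Phi_p(\hh_{\pp,\pp'}(0)k_2^{-1/2})+O(m^{-1/6}).
\]
By \cref{lem:Phip} (with $\ell=\hh_{\pp,\pp'}(0)=O(\sqrt{m\log^{1+\varepsilon}m})$ and $\ell'$ equal to $-6\sqrt{m\log^{1+\varepsilon}m}$ or the $O(\sqrt{m\log^{1+\varepsilon}m})$ correction), both $\Phi_p$ terms equal $\Phi_p(\hh_{\pp,\pp'}(0)(2m)^{-1/2})+O((\log^{1+\varepsilon}m)m^{-1/2})$, so their difference is $O((\log^{1+\varepsilon}m)m^{-1/2})$. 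Since $m>n^{2\varepsilon}$, this is $o(m^{-1/6})$ once $\varepsilon$ is small enough — actually we just need it to be $O(m^{-1/6})$, which holds because $(\log^{1+\varepsilon}m)m^{-1/2}=O(m^{-1/6})$. Combining with the $O(n^{-100})$ from the bad events gives the claimed $O(m^{-1/6})$.

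The main obstacle is making rigorous the step where I invoke \cref{lem:Theart} to describe the distribution of $\TT_\heartsuit(\pp,\pp')$: that lemma was proved for two pipes in the infinite shape $\mathscr R$, and I need to transfer it to $\bT^{(n)}$ and, crucially, to handle the conditioning on $(\pp,\pp')$ being an inversion together with the restriction $\TT_\heartsuit\le R_\pp$. The clean way to do this is not to condition at all, but to bound the target probability by $\mathbb P(k_1<\TT_\heartsuit(\pp,\pp')\le k_2)+O(n^{-100})$, where the $O(n^{-100})$ absorbs the events from \cref{lem:touch_boundary} and \cref{lem:pleasant1} that would otherwise let $R_\pp$ exceed $k_2$ or let the pipes feel the northwest/southeast boundary; on the complement of those events the pipes' joint law up to their first kiss agrees with the infinite-shape model, so \cref{lem:Theart} applies verbatim. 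The secondary subtlety is verifying $R_\pp\le 2m+O(\sqrt{m\log^{1+\varepsilon}m})$ on the good event, which follows since $\pp$ takes at most $m$ steps in (say) the north direction and, by the good event, at most $m+O(\sqrt{m\log^{1+\varepsilon}m})$ steps east before exiting — combining $|\dd_\pp(R_\pp)-\dd_\pp(0)|\le\sqrt{m\log^{1+\varepsilon}m}$ with the count of north steps. Once these bookkeeping points are settled the estimate is immediate from \cref{lem:Theart} and \cref{lem:Phip}.
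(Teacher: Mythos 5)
Your proposal is correct and follows essentially the same route as the paper: dispose of the bad events via \cref{lem:touch_boundary} and \cref{lem:pleasant1}, use order-convexity plus the diagonal-concentration bound to trap $R_\pp$ (and hence $\TT_\heartsuit(\pp,\pp')$ on the inversion event) in a window of width $O(\sqrt{m\log^{1+\varepsilon}m})$ around $2m$, and then apply \cref{lem:Theart} at the two endpoints together with \cref{lem:Phip} to see that the window carries probability $O(m^{-1/6})$. The bookkeeping points you flag (transferring \cref{lem:Theart} from the infinite shape by bounding rather than conditioning, and the bound $|R_\pp-2m|\leq\sqrt{m\log^{1+\varepsilon}m}$) are handled identically in the paper.
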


\begin{proof}
Let $m'=\occ_{\dd_{\pp'}(0)}(\bT^{(n)})$. By \cref{lem:touch_boundary,lem:pleasant1}, we can assume that $\pp$ and $\pp'$ do not touch the northwest or southeast boundary of $\bT^{(n)}$ and that 
\[\max_{0\leq t\leq R_\pp}|\dd_\pp(t)-\dd_\pp(0)|\leq\textstyle{\sqrt{m\log^{1+\varepsilon} m}}\quad\text{and}\quad\max_{0\leq t\leq R_{\pp'}}|\dd_{\pp'}(t)-\dd_{\pp'}(0)|\leq\textstyle{\sqrt{m\log^{1+\varepsilon} m}}.\] 
Because $\bT^{(n)}$ is order-convex and $m=\occ_{\dd_\pp(0)}(\bT^{(n)})$, the pipe $\pp$ either travels at most $m$ steps north or travels at most $m$ steps east.
It follows that $|R_\pp-2m|\leq \sqrt{m\log^{1+\varepsilon} m}$. Similarly, we have ${|R_{\pp'}-2m'|\leq\sqrt{m'\log^{1+\varepsilon} m'}}$. Let $\ell=\hh_{\pp,\pp'}(0)$, and note that $|m-m'|\leq\ell$ (since $\bT^{(n)}$ is order-convex). By \cref{lem:pleasant1}, we may assume that $\ell\leq 2\sqrt{m\log^{1+\varepsilon} m}$. Thus, \[|R_{\pp'}-2m|\leq |R_{\pp'}-2m'|+|2m-2m'|\leq 5\textstyle{\sqrt{m\log^{1+\varepsilon} m}}.\] 

If $(\pp,\pp')$ is an inversion and ${\TT_\heartsuit(\pp,\pp')>2m-6\sqrt{m\log^{1+\varepsilon} m}}$, then (since $\pp$ and $\pp'$ must cross at or before time $R_\pp$) we must have 
\begin{equation}\label{eq:event_narrow1}2m-6\textstyle{\sqrt{m\log^{1+\varepsilon} m}}<\TT_\heartsuit(\pp,\pp')<2m+\textstyle{\sqrt{m\log^{1+\varepsilon} m}}.
\end{equation} 
\cref{lem:Theart} tells us that 
\[\mathbb P\left(\TT_\heartsuit(\pp,\pp')<2m+\textstyle{\sqrt{m\log^{1+\varepsilon} m}}\right)\leq 2-2\Phi_p\left(\ell\left(2m+\textstyle{\sqrt{m\log^{1+\varepsilon} m}}\right)^{-1/2}\right)+O(m^{-1/6})\] (the inequality comes from the fact that $\pp$ or $\pp'$ could terminate before time $2m+\sqrt{m\log^{1+\varepsilon} m}$). By \cref{lem:Phip}, we have 
\[\mathbb P\left(\TT_\heartsuit(\pp,\pp')<2m+\textstyle{\sqrt{m\log^{1+\varepsilon} m}}\right)\leq 2-2\Phi_p(\ell (2m)^{-1/2})+O(m^{-1/6}).\]
By a similar computation, 
\[\mathbb P\left(\TT_\heartsuit(\pp,\pp')\leq 2m-6\textstyle{\sqrt{m\log^{1+\varepsilon} m}}\right)=2-2\Phi_p(\ell (2m)^{-1/2})+O(m^{-1/6})\] (here we have an equality since $R_\pp$ and $R_{\pp}$ are both at least $2m-5\sqrt{m\log^{1+\varepsilon} m}$). This implies that the probability of \eqref{eq:event_narrow1} occurring is $O(m^{-1/6})$. 
\end{proof} 

\begin{lemma}\label{lem:jj'}
If $0<j'-j\leq 2\sqrt{m\log^{1+\varepsilon} m}$, then the probability that $(\pp,\pp')$ is an inversion is 
\[(1+o(1))\left(1-\Phi_p\left(\frac{j'-j}{\sqrt{2m}}\right)\right)+O(m^{-1/6}).\] 
\end{lemma}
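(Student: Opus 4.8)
The plan is to relate the event that $(\pp,\pp')$ is an inversion to the event that $\pp$ and $\pp'$ kiss at least once, and then apply \cref{lem:Theart}. First I would reduce to the ``pleasant'' situation: by \cref{lem:touch_boundary,lem:pleasant1,lem:narrow_first_kiss} (all of whose exceptional events have probability $O(n^{-100})$ or $O(m^{-1/6})$, which is absorbed into the error term), I may assume that neither $\pp$ nor $\pp'$ touches the northwest or southeast boundary of $\bT^{(n)}$, that both pipes stay within $\sqrt{m\log^{1+\varepsilon}m}$ of their starting diagonals, and that if $(\pp,\pp')$ is an inversion then $\TT_\heartsuit(\pp,\pp')\le 2m-6\sqrt{m\log^{1+\varepsilon}m}$, in particular a first kiss occurs well before either pipe terminates. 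Set $\ell=\hh_{\pp,\pp'}(0)=\dd_{\pp'}(0)-\dd_\pp(0)$. Since $\pp$ and $\pp'$ are the $j$-th and $j'$-th pipes whose starting points lie on the common anti-diagonal $x+y=c$ of the serrated shape, consecutive starting boxes differ in $\dd$-value by exactly $2$, so $\ell=2(j'-j)$ (I would check this small bookkeeping fact using the definition of serrated shape and the ordering of pipes).

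The key step is the following dichotomy: once $\pp$ and $\pp'$ have kissed for the first time, conditionally on the first-kiss time and location, the pair is an inversion with probability exactly $1/2 + o(1)$. Intuitively, after the first kiss the two pipes are in the same box, and by the symmetry between $\pp$ and $\pp'$ built into the Markov chain $\mathbf Y$ (the transition matrix $P$ in \eqref{eq:P_matrix} is symmetric under swapping the roles of the two pipes, equivalently conjugation by the permutation matrix exchanging $\N\E\leftrightarrow\E\N$), the subsequent relative trajectory is symmetric, so $\pp'$ ends up northwest of $\pp$ with probability $\tfrac12+o(1)$; an inversion occurs precisely in that case (parity of crossings matching the relative final position). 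The $o(1)$ error here comes from the small chance that one of the pipes reaches the northeast boundary shortly after the first kiss before the relative walk has ``mixed'' — but \cref{lem:narrow_first_kiss} guarantees the first kiss happens at least $6\sqrt{m\log^{1+\varepsilon}m}$ steps before time $2m$, i.e. before termination, and a variant of the argument in \cref{lem:Psi,lem:Theart} (applied to the part of the pipe dream after the first kiss) controls this. Combining, the probability that $(\pp,\pp')$ is an inversion equals $(\tfrac12+o(1))\,\mathbb P(\TT_\heartsuit(\pp,\pp')\le R_\pp) + O(m^{-1/6})$, and by the pleasantness reductions $\mathbb P(\TT_\heartsuit(\pp,\pp')\le R_\pp) = \mathbb P(\TT_\heartsuit(\pp,\pp')\le 2m-6\sqrt{m\log^{1+\varepsilon}m}) + O(m^{-1/6})$.

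It then remains to evaluate this kiss probability. By \cref{lem:Theart}, $\mathbb P(\TT_\heartsuit(\pp,\pp')\le 2m - 6\sqrt{m\log^{1+\varepsilon}m}) = 2 - 2\Phi_p\!\left(\ell\,(2m-6\sqrt{m\log^{1+\varepsilon}m})^{-1/2}\right) + O(m^{-1/6})$, and by \cref{lem:Phip} (with $\ell' = -6\sqrt{m\log^{1+\varepsilon}m} = O(\sqrt{m\log^{1+\varepsilon}m})$) this is $2 - 2\Phi_p\!\left(\ell(2m)^{-1/2}\right) + O((\log^{1+\varepsilon}m)m^{-1/2})$. Multiplying by $\tfrac12 + o(1)$ gives that the inversion probability is $(1+o(1))\bigl(1 - \Phi_p(\ell(2m)^{-1/2})\bigr) + O(m^{-1/6})$. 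Finally I substitute $\ell = 2(j'-j)$, so $\ell(2m)^{-1/2} = 2(j'-j)/\sqrt{2m} = (j'-j)\sqrt{2/m}$; by symmetry of $\Phi_p$ about $0$, $1 - \Phi_p\bigl((j'-j)\sqrt{2/m}\bigr)$ — wait, the claimed answer has argument $(j'-j)/\sqrt{2m}$, so I should double-check the relation between $\ell$ and $j'-j$; depending on the exact indexing convention for serrated shapes it may be $\ell = j'-j$ rather than $2(j'-j)$, and I would pin this down carefully, since it is the one place the constant can slip. With the convention giving $\ell(2m)^{-1/2} = (j'-j)/\sqrt{2m}$, the stated formula follows.

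The main obstacle I anticipate is rigorously justifying the ``$\tfrac12 + o(1)$ after the first kiss'' claim. The subtlety is exactly the one flagged in the paper's overview: after $\pp$ and $\pp'$ kiss, their trajectories are correlated (they can kiss repeatedly), so I cannot simply invoke independence; instead I must run the Markov-chain analysis of \cref{sec:distance} starting from the first-kiss time, observe that the symmetry $\N\E\leftrightarrow\E\N$ of $P$ forces the relative-position walk $\psi$ to be symmetric in law, and argue that the probability the walk is on the positive versus negative side at the termination time is $\tfrac12+o(1)$ — using that the remaining time $R_\pp - \TT_\heartsuit$ (resp. $R_{\pp'} - \TT_\heartsuit$) is $\Theta(\sqrt{m\log^{1+\varepsilon}m})$ or larger with overwhelming probability, so a CLT/Berry–Esseen estimate in the spirit of \cref{lem:Q_constant} applies, while the genuinely ``recent'' kisses contribute only an $o(1)$ bias. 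Everything else is routine asymptotic bookkeeping with the already-established lemmas.
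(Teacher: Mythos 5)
Your overall architecture matches the paper's: reduce to the event that the first kiss occurs by time $2m-6\sqrt{m\log^{1+\varepsilon}m}$, argue that conditionally on this the pair is an inversion with probability $\tfrac12+o(1)$, and then evaluate $\mathbb P(\TT_\heartsuit(\pp,\pp')\leq 2m-6\sqrt{m\log^{1+\varepsilon}m})$ via \cref{lem:Theart} and \cref{lem:Phip}. However, your justification of the central ``$\tfrac12+o(1)$'' step has a genuine gap. The post-kiss relative trajectory is \emph{not} symmetric in law: at a kiss, $\pp$ enters the box from the west and $\pp'$ from the south, so $\hh_{\pp,\pp'}$ jumps to $-2$ with probability $p$ (cross tile) and to $+2$ with probability $1-p$ (bump tile). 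Only the \emph{unordered} pair of trajectories is symmetric; which pipe is which is flipped by each crossing. Consequently, a symmetry or Berry--Esseen argument applied to the relative walk after the first kiss cannot by itself yield $\tfrac12$ --- indeed, if the pipes kissed exactly once, the inversion probability would be $p$, not $\tfrac12$. Moreover, \cref{lem:Q_constant} is proved for the idealized chain ${\bf Y}$, which models the pipes only while they are apart; after the first kiss the correlations at subsequent kisses are exactly the obstruction you acknowledge but do not resolve. The mechanism the paper uses, and which your sketch is missing, is combinatorial rather than analytic: given the set-trajectory of the two pipes, the number of crossings $\mathfrak{C}$ is a sum of $\mathfrak{K}$ independent $\mathrm{Bernoulli}(p)$ variables ($\mathfrak{K}=$ number of kisses), the pair is an inversion iff $\mathfrak{C}$ is odd, and $\mathbb P(\mathfrak{C}\text{ odd})=\tfrac{1}{2}(1-(1-2p)^{\mathfrak{K}})\to\tfrac12$ because $R_\pp-\TT_\heartsuit\geq 5\sqrt{m\log^{1+\varepsilon}m}\to\infty$ forces $\mathfrak{K}\to\infty$.

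A secondary, smaller issue: you leave unresolved whether $\hh_{\pp,\pp'}(0)$ equals $j'-j$ or $2(j'-j)$, and your tentative reasoning (``consecutive starting boxes differ in $\dd$ by $2$, so $\ell=2(j'-j)$'') is off, because each box on the starting anti-diagonal of a serrated shape is the starting box of \emph{two} pipes (one entering from the west, one from the south). Hence consecutive pipes differ in starting $\dd$-value by $0$ or $2$, giving $\hh_{\pp,\pp'}(0)=j'-j+O(1)$, which (by the Lipschitz property of $\Phi_p$) is what produces the stated argument $(j'-j)/\sqrt{2m}$. Since the whole point of the theorem is the exact constant, this bookkeeping cannot be left as a hedge.
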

\begin{proof}
By \cref{lem:touch_boundary,lem:pleasant1}, we can assume that $\pp$ and $\pp'$ do not touch the northwest or southeast boundary of $\bT^{(n)}$ and that 
\[\max_{0\leq t\leq R_\pp}|\dd_\pp(t)-\dd_\pp(0)|\leq\textstyle{\sqrt{m\log^{1+\varepsilon} m}}\quad\text{and}\quad\max_{0\leq t\leq R_{\pp'}}|\dd_{\pp'}(t)-\dd_{\pp'}(0)|\leq\textstyle{\sqrt{m\log^{1+\varepsilon} m}}.\] 
Because $\bT^{(n)}$ is order-convex and $m=\occ_{\dd_\pp(0)}(\bT^{(n)})$, the pipe $\pp$ either travels at most $m$ steps north or travels at most $m$ steps east.
It follows that $|R_\pp-2m|\leq \sqrt{m\log^{1+\varepsilon} m}$. 

Suppose $\TT_\heartsuit(\pp,\pp')\leq 2m-6\sqrt{m\log^{1+\varepsilon} m}$. Let $\mathfrak K$ be the number of times that $\pp$ and $\pp'$ kiss, and let $\mathfrak C$ be the number of times that they cross. Then $\mathfrak C$ is a sum of $\mathfrak K$ independent Bernoulli random variables, each of which takes the value $1$ with probability $p$ and takes the value $0$ with probability $1-p$. Since $R_\pp\geq 2m-\sqrt{m\log^{1+\varepsilon} m}$, we have $R_\pp-\TT_\heartsuit(\pp,\pp')\geq 5\sqrt{m\log^{1+\varepsilon} m}>n^\varepsilon$ (hence, ${R_\pp-\TT_\heartsuit(\pp,\pp')\to\infty}$ as $n\to\infty$), so $\mathfrak K\to\infty$ as $n\to\infty$. This implies that $\mathbb P(\mathfrak C\text{ is odd})=\frac{1}{2}+o(1)$. Consequently, the probability that $(\pp,\pp')$ is an inversion is $\frac{1}{2}+o(1)$. 

The preceding paragraph and \cref{lem:Theart,lem:Phip} tell us that the probability that $\pp$ and $\pp'$ form an inversion is \begin{align*}
&\,\,\,\,\,\,\,\left(\frac{1}{2}+o(1)\right)\mathbb P\left(\TT_\heartsuit(\pp,\pp')\leq 2m-6\textstyle{\sqrt{m\log^{1+\varepsilon} m}}\right)+O(m^{-1/6}) \\ &=\left(\frac{1}{2}+o(1)\right)\left(2-2\Phi_p\left((j'-j)\left(2m-6\textstyle{\sqrt{m\log^{1+\varepsilon} m}}\right)^{-1/2}\right)\right)+O(m^{-1/6}) \\ &=\left(\frac{1}{2}+o(1)\right)\left(2-2\Phi_p\left(\frac{j'-j}{\sqrt{2m}}\right)+O\left((\log^{1+\varepsilon} m)m^{-1/2}\right)\right)+O(m^{-1/6}) \\ &=(1+o(1))\left(1-\Phi_p\left(\frac{j'-j}{\sqrt{2m}}\right)\right)+O(m^{-1/6}). \qedhere
\end{align*} 
\end{proof}

\begin{lemma}\label{lem:end_serrated}
The expected number of pipes $\qq$ in $\PD_p(\bT^{(n)})$ such that $(\pp,\qq)$ is an inversion is \[\left(\sqrt{2/\pi}+o(1)\right)\sqrt{\frac{p}{1-p}}\sqrt{m}.\]
\end{lemma}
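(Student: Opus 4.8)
The plan is to apply linearity of expectation over the pipes $\qq$, estimating the probability that each pair $(\pp,\qq)$ is an inversion via \cref{lem:jj'} and the remark following \cref{lem:pleasant1}. By the definition of an inversion of a pipe dream, a pipe $\qq$ can satisfy that $(\pp,\qq)$ is an inversion only if its index $j'$ exceeds $j$, so I will parametrize such a pipe by $d = j' - j \geq 1$. The hypotheses \eqref{eq:MM} and \eqref{eq:10}, together with $\NN_n \geq m$, ensure that $n - j$ is much larger than $\sqrt{m\log^{1+\varepsilon}m}$, so every value $1 \leq d \leq 2\sqrt{m\log^{1+\varepsilon}m}$ is realized by an actual pipe $\qq$ of index $j+d$. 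For these ``near'' values, \cref{lem:jj'} gives
\[\mathbb P\bigl((\pp,\qq)\text{ is an inversion}\bigr) = (1+o(1))\left(1 - \Phi_p\!\left(\frac{d}{\sqrt{2m}}\right)\right) + O(m^{-1/6}),\]
while for each of the at most $n$ ``far'' values $d > 2\sqrt{m\log^{1+\varepsilon}m}$, the remark following \cref{lem:pleasant1} gives $\mathbb P((\pp,\qq)\text{ is an inversion}) = O(n^{-100})$.

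Summing over all $\qq$ and using $m > n^{2\varepsilon}$ (so that $m \to \infty$), I would obtain
\[\mathbb E\bigl(\#\{\qq : (\pp,\qq)\text{ is an inversion}\}\bigr) = (1+o(1))\sum_{d\geq 1}\left(1 - \Phi_p\!\left(\frac{d}{\sqrt{2m}}\right)\right) + O\bigl(m^{1/3}\log^{(1+\varepsilon)/2}m\bigr) + O(n^{-99}).\]
Here the contribution of the near values equals $(1+o(1))\sum_{1\le d\le 2\sqrt{m\log^{1+\varepsilon}m}}(1-\Phi_p(d/\sqrt{2m}))$ plus the accumulated per-pair error $O(\sqrt{m\log^{1+\varepsilon}m}\cdot m^{-1/6}) = O(m^{1/3}\log^{(1+\varepsilon)/2}m)$, and the sum has then been extended to all $d\geq 1$: the added tail is negligible because for $d > 2\sqrt{m\log^{1+\varepsilon}m}$ we have $d/\sqrt{2m} \gg \log^{(1+\varepsilon)/2}m$, and the Gaussian tail of $1-\Phi_p$ forces $\sum_{d > 2\sqrt{m\log^{1+\varepsilon}m}}(1-\Phi_p(d/\sqrt{2m})) = o(1)$.

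It remains to evaluate $\sum_{d\geq 1}(1-\Phi_p(d/\sqrt{2m}))$. Since $x \mapsto 1 - \Phi_p(x/\sqrt{2m})$ is continuous and decreasing, comparing the sum with the corresponding integral gives
\[\sum_{d\geq 1}\left(1-\Phi_p\!\left(\frac{d}{\sqrt{2m}}\right)\right) = \sqrt{2m}\int_0^\infty\bigl(1-\Phi_p(u)\bigr)\,du + O(1).\]
Because $\Phi_p$ is the distribution function of a centered normal random variable $Z$ with variance $\frac{2p}{1-p}$, we have $\int_0^\infty(1-\Phi_p(u))\,du = \mathbb E(\max(Z,0)) = \frac{1}{\sqrt{2\pi}}\sqrt{\frac{2p}{1-p}} = \frac{1}{\sqrt{\pi}}\sqrt{\frac{p}{1-p}}$, so $\sum_{d\geq 1}(1-\Phi_p(d/\sqrt{2m})) = \sqrt{2/\pi}\,\sqrt{\frac{p}{1-p}}\,\sqrt m + O(1)$. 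Plugging this into the displayed estimate and observing that $O(1)$, $O(m^{1/3}\log^{(1+\varepsilon)/2}m)$, and $O(n^{-99})$ are all $o(\sqrt m)$ (again since $m > n^{2\varepsilon}$), I arrive at the asserted value $\bigl(\sqrt{2/\pi}+o(1)\bigr)\sqrt{\frac{p}{1-p}}\,\sqrt m$. As a consistency check, note that the one-sidedness built into the definition of an inversion of a pipe dream is exactly what produces the constant $\sqrt{2/\pi}$ rather than twice that.

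Given the earlier results, the argument is essentially bookkeeping; the point that needs the most care is controlling the errors---verifying that the roughly $\sqrt{m\log^{1+\varepsilon}m}$ near pairs contribute a total error $o(\sqrt m)$, and that both the truncation of the infinite sum and the aggregate contribution of the far pairs are negligible next to $\sqrt m$. The only other thing worth double-checking is the evaluation $\int_0^\infty(1-\Phi_p(u))\,du = \frac{1}{\sqrt{\pi}}\sqrt{\frac{p}{1-p}}$, since the exact constant in the statement rests on it.
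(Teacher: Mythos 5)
Your proposal is correct and follows essentially the same route as the paper: split into near pairs (handled by the estimate of \cref{lem:jj'}) and far pairs (handled by the remark after \cref{lem:pleasant1}), convert the sum to $\sqrt{2m}\int_0^\infty(1-\Phi_p(x))\,dx$, and evaluate that integral to get $\tfrac{1}{\sqrt{\pi}}\sqrt{p/(1-p)}$. The only differences are cosmetic---you track the accumulated error terms more explicitly and compute the integral as $\mathbb E(\max(Z,0))$ rather than by Fubini and substitution---and both refinements check out.
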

\begin{proof}
By \cref{lem:pleasant1,lem:jj'}, the expected number of inversions of the form $(\pp,\qq)$ is 
\begin{align*}
&\,\,\,\,\,\,\,\sum_{i=1}^{2\sqrt{m}\log m}\left((1+o(1))\left(1-\Phi_p\left(\frac{i}{\sqrt{2m}}\right)\right)+O(m^{-1/6})\right)+O(n^{-99}) \\ &=(1+o(1))\sqrt{2m}\int_0^\infty\left(1-\Phi_p(x)\right)dx \\ &=(1+o(1))\sqrt{2m}\int_0^\infty\int_x^\infty\frac{1}{2\sqrt{\pi}}\sqrt{\frac{1-p}{p}}\exp\left(-\frac{1-p}{4p}u^2\right)du\,dx \\ &=(1+o(1))\sqrt{2m}\int_0^\infty\int_0^u\frac{1}{2\sqrt{\pi}}\sqrt{\frac{1-p}{p}}\exp\left(-\frac{1-p}{4p}u^2\right)dx\,du \\ &=\left(\sqrt{2/\pi}+o(1)\right)\sqrt{\frac{p}{1-p}}\sqrt{m}\int_0^\infty\exp\left(-\frac{1-p}{4p}u^2\right)\frac{1-p}{2p}u\,du. 
\end{align*}
Setting $y=\frac{1-p}{4p}u^2$, we obtain that 
\[\int_0^\infty\exp\left(-\frac{1-p}{4p}u^2\right)\frac{1-p}{2p}u\,du=\int_0^\infty\exp(-y)\,dy=1,
\] which implies the desired result. 
\end{proof}

\section{General Shapes}\label{sec:finish_proof}
This section finishes the proof of \cref{thm:main}. For each integer $n\geq 2$, let $\bS^{(n)}$ be a finite order-convex shape of rank $n-1$, and let $\MM_n=\max_{i\in[n-1]}\occ_i(\bS^{(n)})$. Assume that 
\begin{equation}\label{eq:MM_Bounds}\mathfrak{M}_n=o\left(n^2/\log^{1+\varepsilon} n\right) 
\end{equation} and that 
\begin{equation}\label{eq:few_short_pipes}|\{i\in[n-1]:\occ_i(\bS^{(n)})\leq n^{2\varepsilon}\}|=o(n^{1-\varepsilon}).
\end{equation}  
Let 
\[\LL_n=\sqrt{2/\pi}\sqrt{\frac{p}{1-p}}\sum_{i=1}^{n-1}\sqrt{\occ_i(\bS^{(n)})}.\]

\begin{lemma}\label{lem:LL}
We have $\LL_n\gg n^{1+\varepsilon}$ and $\LL_n\gg \MM_n^{3/2}$. 
\end{lemma}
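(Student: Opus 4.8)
The plan is to prove the two inequalities separately. The bound $\LL_n\gg n^{1+\varepsilon}$ is an immediate consequence of the hypothesis \eqref{eq:few_short_pipes}, while $\LL_n\gg\MM_n^{3/2}$ is the substantive part and is where order-convexity enters.

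For $\LL_n\gg n^{1+\varepsilon}$: by \eqref{eq:few_short_pipes} we have $|\{i\in[n-1]:\occ_i(\bS^{(n)})\leq n^{2\varepsilon}\}|=o(n^{1-\varepsilon})=o(n)$, so for all large $n$ at least $n/2$ of the indices $i\in[n-1]$ satisfy $\occ_i(\bS^{(n)})>n^{2\varepsilon}$, and hence $\sqrt{\occ_i(\bS^{(n)})}>n^{\varepsilon}$. Summing over just these indices gives $\sum_{i=1}^{n-1}\sqrt{\occ_i(\bS^{(n)})}>\tfrac12 n^{1+\varepsilon}$, which is precisely the claim $\LL_n\gg n^{1+\varepsilon}$.

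For $\LL_n\gg\MM_n^{3/2}$, fix $i^{*}\in[n-1]$ with $\occ_{i^{*}}(\bS^{(n)})=\MM_n$. The $\MM_n$ boxes of $\bS^{(n)}$ with $\dd=i^{*}$ form a chain in the box poset; let $\bb_-$ and $\bb_+$ be its least and greatest elements, so that $\bb_+$ is obtained from $\bb_-$ by moving $\MM_n-1$ units east and $\MM_n-1$ units north. The order interval $\{\bb:\bb_-\leq\bb\leq\bb_+\}$ is then an $\MM_n\times\MM_n$ square array of boxes whose central diagonal is $\{\bb:\dd(\bb)=i^{*}\}$, and for each integer $d$ with $|d|\leq\MM_n-1$ this array contains exactly $\MM_n-|d|$ boxes with $\dd=i^{*}+d$. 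Since $\bS^{(n)}$ is order-convex, every box of this array whose diagonal lies in $\{1,\ldots,n-1\}$ must belong to $\bS^{(n)}$. This gives the \emph{spreading bound}: for every $j\in[n-1]$ with $|j-i^{*}|\leq\MM_n-1$,
\[\occ_j(\bS^{(n)})\geq\MM_n-|j-i^{*}|.\]

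Finally, summing: among the indices $j\in[n-1]$ with $|j-i^{*}|\leq\MM_n/2$ — of which there are at least $\min(\MM_n,n-1)/2$ since $i^{*}\in[n-1]$ — the spreading bound gives $\occ_j(\bS^{(n)})\geq\MM_n/2$ and hence $\sqrt{\occ_j(\bS^{(n)})}\geq\sqrt{\MM_n/2}$. This yields $\LL_n\gg\min(\MM_n,n)\sqrt{\MM_n}$, which already delivers $\LL_n\gg\MM_n^{3/2}$ in the regime $\MM_n=O(n)$ (the regime relevant to reduced words for the decreasing permutation); in the complementary range, which is constrained from above by \eqref{eq:MM_Bounds}, one closes the gap by combining this with the first inequality and a short elementary estimate. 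I expect the spreading bound to be the crux of the argument: it is the only place order-convexity is used, and converting a single long diagonal into a full square array and then correctly accounting for how much of that array survives the restriction $1\leq\dd\leq n-1$ is the delicate step, everything downstream being a routine summation.
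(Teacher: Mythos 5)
Your proof of $\LL_n\gg n^{1+\varepsilon}$ and your ``spreading bound'' $\occ_j(\bS^{(n)})\geq\MM_n-|j-i^*|$ are exactly the paper's argument: the paper picks $i_0$ with $\occ_{i_0}(\bS^{(n)})=\MM_n$, uses order-convexity to get $\occ_{i_0+\MM_n-j}(\bS^{(n)})\geq j$ for $1\leq j\leq\MM_n$, and sums $\sum_{j=1}^{\MM_n}\sqrt{j}\gg\MM_n^{3/2}$. So in the regime $\MM_n=O(n)$ your proof coincides with the paper's.

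The genuine gap is the deferred ``complementary range.'' No ``short elementary estimate'' closes it: trivially $\LL_n\leq Cn\sqrt{\MM_n}$, so $\LL_n\gg\MM_n^{3/2}$ already forces $\MM_n=O(n)$, and combining your bound $\LL_n\gg\min(\MM_n,n)\sqrt{\MM_n}$ with $\LL_n\gg n^{1+\varepsilon}$ yields nothing new when $\MM_n/n\to\infty$. Moreover, hypothesis \eqref{eq:MM_Bounds} does permit that regime: the order-convex shape $\{\fb(x,y):1\leq x-y\leq n-1,\ 0\leq x+y\leq n^{3/2}\}$ has every $\occ_i\asymp n^{3/2}$, hence satisfies \eqref{eq:MM_Bounds} and \eqref{eq:few_short_pipes}, yet $\LL_n\asymp n^{7/4}$ while $\MM_n^{3/2}\asymp n^{9/4}$. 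So the case you set aside is nonempty and the inequality fails there; it cannot be patched downstream of your (correct) spreading bound. In fairness, the paper's own proof makes the same silent assumption --- it asserts that \eqref{eq:MM_Bounds} gives $i_0+\MM_n\leq n$, which only holds when $\MM_n$ is at most linear in $n$ --- so the defect is arguably in the lemma's hypotheses rather than in your main argument; but your claim that the remaining case follows from an elementary estimate is one that neither you nor the paper can make good on as stated.
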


\begin{proof}
The first inequality follows from \eqref{eq:few_short_pipes}, which implies that 
\[\LL_n\gg\sum_{i=1}^{n-1}\sqrt{\occ_i(\bS^{(n)})}\geq (n-o(n))n^{\varepsilon}.\] 

Now, let $i_0\in[n-1]$ be such that $\occ_{i_0}(\bS^{(n)})=\MM_n$. Let us assume $i_0\leq n/2$; a similar argument handles the case in which $i_0>n/2$. If $n$ is sufficiently large, then we know by \eqref{eq:MM_Bounds} that $i_0+\MM_n\leq n$. Because $\bS^{(n)}$ is order-convex, we have $\occ_{i_0+\MM_n-j}(\bS^{(n)})\geq j$ for all $1\leq j\leq \MM_n$. Therefore, 
\begin{align*}\LL_n&\gg\sum_{i=1}^{n-1}\sqrt{\occ_i(\bS^{(n)})} \geq\sum_{j=1}^{\MM_n}\sqrt{\occ_{i_0+\MM_n-j}(\bS^{(n)})} \geq\sum_{j=1}^{\MM_n}\sqrt{j} \gg\MM_n^{3/2}. \qedhere
\end{align*} 
\end{proof}

Consider a random pipe dream $\PD_p(\bS^{(n)})$, and let $v^{(n)}$ be the random permutation that it represents. The inversions of $\PD_p(\bS^{(n)})$ are in bijection with the inversions of $v^{(n)}$, so our goal is to show that the expected number of inversions of $\PD_p(\bS^{(n)})$ is $(1+o(1))\LL_n$.   

Let $\pp_i$ denote the $i$-th pipe in $\PD_p(\bS^{(n)})$, and let $c_i=\alpha_i+\beta_i$ and $d_i=\alpha_i-\beta_i$, where $\fb(\alpha_i,\beta_i)$ is the unique box in $\bS^{(n)}$ containing the starting point of $\pp_i$. Note that $|d_i-i|\leq 1$. We will sometimes refer to $d_i$ and $i$ interchangeably; even though this is not technically correct, it will not affect the relevant asymptotics (in the same way that omitting floor and ceiling symbols does not affect the asymptotics). Let $R_{\qq}+1$ be the number of boxes that intersect a pipe $\qq$ in $\PD_p(\bS^{(n)})$.  

Let 
\[{\bf L}=\left\{i\in[n-1]:i\leq 5\textstyle{\sqrt{\MM_n\log^{1+\varepsilon}\MM_n}}\right\}\quad\text{and}\quad{\bf R}=\left\{i\in[n-1]:i\geq n-5\textstyle{\sqrt{\MM_n\log^{1+\varepsilon} \MM_n}}\right\},\] and let ${\bf M}=[n-1]\setminus({\bf L}\cup{\bf R})$. For $U,U'\subseteq[n-1]$, let $\mathcal E(U,U')$ be the expected number of inversions of $\PD_p(\bS^{(n)})$ of the form $(\pp_i,\pp_j)$ with $i\in U$ and $j\in U'$. Note that 
\begin{equation}\label{eq:LMR}
\mathcal E([n-1],[n-1])=\mathcal E({\bf M},{\bf M})+\mathcal E({\bf M},{\bf R})+\mathcal E({\bf L},{\bf M})+\mathcal E({\bf L},{\bf R})+\mathcal E({\bf L},{\bf L})+\mathcal E({\bf R},{\bf R}).
\end{equation}

\begin{lemma}\label{lem:Final1}
If $j\in{\bf M}$ is such that $\occ_j(\bS^{(n)})>n^{2\varepsilon}$, then  
$\mathcal E(\{j\},{\bf R})=O(n^{-99})$. 
\end{lemma}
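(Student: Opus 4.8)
The statement asserts that for a pipe $\pp_j$ whose starting diagonal is in the ``middle zone'' ${\bf M}$ and whose column $d_j$ carries more than $n^{2\varepsilon}$ boxes, the expected number of inversions $(\pp_j,\pp_i)$ with $i\in{\bf R}$ is negligibly small (of order $O(n^{-99})$). The key geometric point is that any such inversion would require the pipe $\pp_j$ and a pipe $\pp_i$ with $i\geq n-5\sqrt{\MM_n\log^{1+\varepsilon}\MM_n}$ to meet (kiss and cross), and since $\pp_j$ starts on a diagonal that is at distance at least $\sqrt{\MM_n\log^{1+\varepsilon}\MM_n}$ (in fact, much more, by the definition of ${\bf M}$) from the region ${\bf R}$ of high-index diagonals, at least one of the two pipes must travel a distance of order $\sqrt{\MM_n\log^{1+\varepsilon}\MM_n}$ in the $\dd$-direction. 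But by \eqref{eq:MM_Bounds} every pipe lives for at most $O(\MM_n)$ time steps, so such a large deviation is precisely what \cref{lem:veer} (applied as in the proofs of \cref{lem:touch_boundary,lem:pleasant1}) rules out with overwhelming probability.

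\textbf{Steps.} First I would record the separation estimate: if $j\in{\bf M}$ and $i\in{\bf R}$, then $d_i-d_j\geq n-5\sqrt{\MM_n\log^{1+\varepsilon}\MM_n}-5\sqrt{\MM_n\log^{1+\varepsilon}\MM_n}-2\geq n/2$ for $n$ large (using $\sqrt{\MM_n\log^{1+\varepsilon}\MM_n}=o(n)$, which follows from \eqref{eq:MM_Bounds}). Second, observe that for $(\pp_i,\pp_j)$ to be an inversion the two pipes must occupy a common box at some time $t$, and since $\hh_{\pp_j,\pp_i}(0)=d_i-d_j\geq n/2$, we need $|\dd_{\pp_j}(t)-\dd_{\pp_j}(0)|+|\dd_{\pp_i}(t)-\dd_{\pp_i}(0)|\geq n/2$, so at least one of $\pp_j,\pp_i$ satisfies $\max_t|\dd_{\qq}(t)-\dd_{\qq}(0)|\geq n/4$. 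Third, by order-convexity and \eqref{eq:MM_Bounds}, every pipe $\qq$ in $\PD_p(\bS^{(n)})$ satisfies $R_{\qq}\leq 2\MM_n$ (the same argument as for \eqref{eq:RO}), so for a fixed pipe $\qq$ and fixed times $0\leq t<t'\leq 2\MM_n$, \cref{lem:veer} applied to the part of $\qq$ starting at time $t$ gives $\mathbb P(|\dd_{\qq}(t')-\dd_{\qq}(t)|\geq n/4)\leq C_1\exp(-C_2\,(n/4)^2/(n/4+2\MM_n))$. Since $\MM_n=o(n^2/\log^{1+\varepsilon}n)=o(n^2)$, the exponent is $\gg n^2/\MM_n\gg \log^{1+\varepsilon}n$, so this probability is $O(n^{-1000})$, say. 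Fourth, take a union bound over the two pipes $\pp_i,\pp_j$, over the $O(\MM_n^2)=O(n^4)$ pairs $(t,t')$, and over the (at most $n$) choices of $i\in{\bf R}$: the total is $O(n^4\cdot n\cdot n^{-1000})=O(n^{-99})$, which bounds the probability that \emph{any} inversion $(\pp_j,\pp_i)$ with $i\in{\bf R}$ occurs, hence also bounds the expected number of such inversions (each of which is at most $n$ anyway, absorbed by enlarging the constant in the exponent or by a trivially larger power of $n$ in the union bound). This yields $\mathcal E(\{j\},{\bf R})=O(n^{-99})$.

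\textbf{Main obstacle.} The substantive content is entirely in the quantitative large-deviation step: one must check that the exponent $C_2(n/4)^2/(n/4+2\MM_n)$ in \cref{lem:veer} really does dominate $\log n$ by the margin needed to beat the polynomial factors from the union bound. This is where hypothesis \eqref{eq:MM_Bounds}, $\MM_n=o(n^2/\log^{1+\varepsilon}n)$, is used in an essential way: it forces $n^2/\MM_n\to\infty$ faster than any power of $\log n$, so $\exp(-C_2 n^2/(cn+c\MM_n))$ decays faster than any fixed negative power of $n$. Everything else — the separation inequality, the bound $R_{\qq}\leq 2\MM_n$, and the union bound bookkeeping — is routine and parallels the proofs of \cref{lem:touch_boundary} and \cref{lem:pleasant1} already in the paper. (I note that the hypothesis $\occ_j(\bS^{(n)})>n^{2\varepsilon}$ in the statement is not actually needed for this particular estimate; it is presumably carried along because the same pipe $\pp_j$ will be fed into later lemmas where it \emph{is} required.)
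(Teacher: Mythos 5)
Your argument has a genuine gap at its very first step, and the gap is fatal to the approach as written. You claim that $j\in{\bf M}$ and $i\in{\bf R}$ force $d_i-d_j\geq n/2$. This is false: ${\bf M}=[n-1]\setminus({\bf L}\cup{\bf R})$ contains every index strictly between $5\sqrt{\MM_n\log^{1+\varepsilon}\MM_n}$ and $n-5\sqrt{\MM_n\log^{1+\varepsilon}\MM_n}$, so the largest element of ${\bf M}$ is adjacent to the smallest element of ${\bf R}$, and $d_i-d_j$ can be $O(1)$. The inequality you wrote, $d_i-d_j\geq n-10\sqrt{\MM_n\log^{1+\varepsilon}\MM_n}-2$, is what you would get for $j\in{\bf L}$, not $j\in{\bf M}$; it is the estimate relevant to $\mathcal E({\bf L},{\bf R})$, which the paper disposes of trivially via $\mathcal E({\bf L},{\bf R})\leq|{\bf L}|\,|{\bf R}|$ in \eqref{eq:A3}. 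For a pair of pipes starting $O(1)$ apart, no large transverse deviation is forced by the existence of an inversion (indeed, by \cref{lem:jj'} such a pair is an inversion with probability tending to $\tfrac12$), so your union bound says nothing about a positive fraction of the indices $j\in{\bf M}$ that the lemma must cover.

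The paper's proof instead works at the much finer scale $\sqrt{m\log^{1+\varepsilon}m}$, where $m=\occ_j(\bS^{(n)})$: it reruns the arguments of \cref{lem:touch_boundary,lem:pleasant1} to show that, outside an event of probability $O(n^{-100})$, on the inversion event both pipes stay within $\sqrt{m\log^{1+\varepsilon}m}$ of their starting diagonals; since an inversion requires a kiss, i.e.\ a time $t$ with $\dd_{\pp_j}(t)=\dd_{\pp_{j'}}(t)$, this excludes inversions with $j'-j>2\sqrt{m\log^{1+\varepsilon}m}$, and in particular those with $j'\in{\bf R}$ when $j$ is at least that far below $n-5\sqrt{\MM_n\log^{1+\varepsilon}\MM_n}$. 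This is also exactly where the hypothesis $\occ_j(\bS^{(n)})>n^{2\varepsilon}$ enters (it converts the bound $O(m^{-50/\varepsilon})$ coming from \cref{lem:veer} into $O(n^{-100})$), so your closing remark that this hypothesis is superfluous does not hold for the argument the paper actually runs. To repair your write-up, you would need to replace the separation $n/2$ by the actual gap $j'-j$ and invoke the deviation bound at scale $\sqrt{m\log^{1+\varepsilon}m}$ as in \cref{lem:pleasant1}, handling the boundary pairs with $j'-j$ small separately.
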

\begin{proof}
Let $m=\occ_j(\bS^{(n)})$. The same argument used to prove \cref{lem:touch_boundary} shows that the probability that $\pp_j$ touches the northwest or southeast boundary of $\bS^{(n)}$ is $O(n^{-100})$. Thus, we may assume $\pp_j$ does not touch the northwest or southeast boundary. If $j<j'\leq n-1$, then the same argument used to prove \cref{lem:pleasant1} shows that the probability that $(\pp_j,\pp_{j'})$ is an inversion and \[\max_{0\leq t\leq R_{\pp_j}}|\dd_{\pp_j}(t)-\dd_{\pp_j}(0)|>\textstyle{\sqrt{m\log^{1+\varepsilon} m}}\quad\text{or}\quad\max_{0\leq t\leq R_{\pp_{j'}}}|\dd_{\pp_{j'}}(t)-\dd_{\pp_{j'}}(0)|>\textstyle{\sqrt{m\log^{1+\varepsilon} m}}\] is $O(n^{-100})$. In particular, $\mathcal E(\{j\},\{j'\})=O(n^{-100})$ if $j'\in{\bf R}$. This implies the desired result. 
\end{proof} 

\begin{lemma}\label{lem:bulk} 
If $j\in{\bf M}$ is such that $\occ_j(\bS^{(n)})>n^{2\varepsilon}$, then \[\mathcal E(\{j\},{\bf M}\cup{\bf R})=\left(\sqrt{2/\pi}+o(1)\right)\sqrt{\frac{p}{1-p}}\sqrt{\occ_j(\bS^{(n)})}.\]
\end{lemma}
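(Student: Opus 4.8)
The plan is to mimic the argument of \cref{sec:serrated}: for each $j'>j$, estimate the probability that $(\pp_j,\pp_{j'})$ is an inversion, and then add up. Set $m=\occ_j(\bS^{(n)})$; since $j\in\mathbf M$ we have $n^{2\varepsilon}<m\le\MM_n$ and \eqref{eq:10} holds with $\dd_\pp(0)=d_j$ and $\NN_n=\MM_n$. First I would trim. The arguments of \cref{lem:touch_boundary,lem:pleasant1} — already reused in the proof of \cref{lem:Final1} — show that at total cost $O(n^{-99})$ we may work on the event that neither $\pp_j$ nor a competing pipe touches the northwest or southeast boundary of $\bS^{(n)}$ and that no such pipe strays more than $\sqrt{m\log^{1+\varepsilon}m}$ off the diagonal $\{\dd=d_j\}$; on this event only pipes $\pp_{j'}$ with $0<j'-j\le 2\sqrt{m\log^{1+\varepsilon}m}$ contribute, and (because $j\in\mathbf M$ and $|d_i-i|\le1$) all such indices lie in $\mathbf M\cup\mathbf R$. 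So it suffices to show that $\sum_{0<j'-j\le 2\sqrt{m\log^{1+\varepsilon}m}}\mathbb P\big((\pp_j,\pp_{j'})\text{ is an inversion}\big)$ equals $\big(\sqrt{2/\pi}+o(1)\big)\sqrt{p/(1-p)}\sqrt m$, the discarded $O(n^{-99})$ being negligible against $\sqrt m\gg n^{\varepsilon}$.

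The genuinely new difficulty, compared with \cref{sec:serrated}, is that a competing pipe $\pp_{j'}$ generally starts on an anti-diagonal $x+y=c_{j'}$ different from $\pp_j$'s anti-diagonal $x+y=c_j$ (though $|c_{j'}-c_j|=O(j'-j)$), so the two pipes are not ``synchronized'' as the Markov chain of \cref{sec:distance} requires. To handle this I would run whichever of $\pp_j,\pp_{j'}$ starts on the lower anti-diagonal forward until it first reaches the higher one, and then compare the two pipes from that common anti-diagonal onward. This is legitimate: since a pipe never decreases $x+y$, every crossing of $\pp_j$ and $\pp_{j'}$ occurs at or above $\max(c_j,c_{j'})$, so the part of the forwarded pipe below that level is irrelevant to whether the pair forms an inversion; and conditioning on the box and facing direction at which the forwarded pipe enters the common anti-diagonal leaves its subsequent joint evolution with the other pipe distributed exactly like that of two pipes started on a common anti-diagonal in the infinite shape $\mathscr R$, so that \cref{lem:Theart} applies with initial separation $j'-j\pm w$, where $w$ denotes the net change of the forwarded pipe's diagonal coordinate over the $O(j'-j)$ forwarding steps. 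Because a single pipe's diagonal coordinate has no net drift — its displacement from the start has mean $O(1)$ uniformly in time (compare the symmetric decomposition used in the proof of \cref{lem:half}) and variance $O(\text{time})$ — one gets $\mathbb E[w]=O(1)$ and $\mathbb E[w^2]=O(j'-j)$. Feeding $j'-j\pm w$ in place of $j'-j$ through the estimates of \cref{lem:jj',lem:end_serrated}, conditioning on $w$ and then averaging, the two corrections coming from $\mathbb E[w]$ and $\mathbb E[w^2]$ contribute $\sum_{j'}\big(O(m^{-1/2})+O((j'-j)/m)\big)=O(\sqrt{\log^{1+\varepsilon}m})=o(\sqrt m)$; everything else is exactly the computation already carried out in \cref{lem:jj',lem:end_serrated} (one uses order-convexity of $\bS^{(n)}$ to see that each competing pipe has length $2m(1+o(1))$ and that $|\occ_{d_{j'}}(\bS^{(n)})-m|\le j'-j+|w|$). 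This yields the required value $\big(\sqrt{2/\pi}+o(1)\big)\sqrt{p/(1-p)}\sqrt m$. Equivalently, one can package the reduction by building an order-convex serrated shape that matches $\bS^{(n)}$ on all diagonals within $O(\sqrt{\MM_n\log^{1+\varepsilon}\MM_n})$ of $d_j$, coupling the tiles in the region $x+y\ge c_j$, and invoking \cref{lem:end_serrated} directly; the re-synchronization shift $w$ must still be dealt with in the coupling.

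The step I expect to be the main obstacle is justifying this re-synchronization rigorously: checking that conditioning on the entry data at a fixed anti-diagonal truly restores the independence structure underlying \cref{lem:Theart,lem:jj'}, establishing the drift bound $\mathbb E[w]=O(1)$ cleanly, and controlling the rare boundary and far-wandering events uniformly in $j'$ throughout. Once those are in place, the proof is bookkeeping with the estimates from \cref{sec:concentration,sec:distance,sec:serrated}.
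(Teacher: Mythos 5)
Your primary argument takes a genuinely different route from the paper, and the route you relegate to your final sentence (``equivalently, one can package the reduction by building an order-convex serrated shape\ldots'') is in fact the paper's entire proof. The paper forms the serrated shape $\bT_j$ consisting of the full anti-diagonal $\{\fb(x,y):x+y=c_j,\ 1\leq x-y\leq n-1\}$ together with all boxes at level $\geq c_j$ lying weakly southwest of the northeast boundary of $\bS^{(n)}$, couples the tiles on $\bS^{(n)}\cap\bT_j$, notes that $\pp_j$ is identical in the two pipe dreams, and invokes \cref{lem:end_serrated}. Your claim that ``the re-synchronization shift $w$ must still be dealt with in the coupling'' is incorrect, and missing this is missing the key idea: in $\PD_p(\bT_j)$ \emph{every} pipe starts on the common anti-diagonal $x+y=c_j$ by construction, so \cref{lem:end_serrated} applies verbatim with no shift. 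The competing pipes $\widetilde\pp_{j'}$ of the serrated pipe dream are generally \emph{different pipes} from the $\pp_{j'}$ of $\PD_p(\bS^{(n)})$ (they start in different boxes), but that is irrelevant because the argument only needs the \emph{count} of inversions having $\pp_j$ as the northwest pipe to agree in the two pipe dreams, which follows since $\pp_j$'s trajectory and the tiles along it are preserved by the coupling. No pipe-by-pipe correspondence, and hence no re-synchronization, is required.

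Your pairwise re-synchronization argument is plausible in outline but has real gaps as written. First, after forwarding $\pp_j$ to level $c_{j'}$ the effective separation $d_{j'}-d_j-w$ can be negative with probability bounded away from $0$ when $j'-j$ is small, so you would need to extend \cref{lem:Theart} and the argument of \cref{lem:jj'} to negative (and random) initial separations; the paper's setup assumes $\hh_{\pp,\pp'}(0)\geq 0$. Second, \cref{lem:jj'} is not a black box into which one can ``feed $j'-j\pm w$'': its proof uses the serrated structure to control $R_{\pp}$, $R_{\pp'}$, the window \eqref{eq:event_narrow1}, and the kissing-parity argument, all of which would have to be re-derived for a pair of pipes with a random, possibly negative, starting separation after conditioning on the forwarding data. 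Third, your error accounting is loose: averaging the Lipschitz bound over $w$ gives a per-pair correction of order $\mathbb E|w|\,m^{-1/2}=O(\sqrt{j'-j}\,m^{-1/2})$, and the sum over $j'$ is $o(\sqrt m)$ but not $O(\sqrt{\log^{1+\varepsilon}m})$ as you state. None of this is fatal, but the coupling route avoids all of it; I recommend you develop that route instead and drop the forwarding argument.
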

\begin{proof}
Let $m=\occ_j(\bS^{(n)})$. Consider the shapes \[\mathscr U=\{\fb(x,y):x+y=c_j,\,1\leq x-y\leq n-1\}\quad\text{and}\quad\mathscr{U}'=\{\fb(x,y):x+y\geq c_j,\,1\leq x-y\leq n-1\}.\] Let $\widetilde\bT_j$ be the set of boxes in $\mathscr U'$ that lie weakly southwest of the northeast boundary of $\bS^{(n)}$, and let $\bT_j=\widetilde\bT_j\cup\mathscr U$. (See \cref{fig:serrate} for an illustration.) Let us generate random pipe dreams $\PD_p(\bS^{(n)})$ and $\PD_p(\bT_j)$ that are coupled so that each box in $\bS^{(n)}\cap\bT_j$ is filled with the same type of tile in one pipe dream as in the other. In other words, we can imagine generating $\PD_p(\bT_j)$ from $\PD_p(\bS^{(n)})$ by keeping the tiles in the boxes in $\bS^{(n)}\cap\bT_j$ the same and then choosing new tiles to fill the boxes in $\bT_j\setminus\bS^{(n)}$ (with each box filled with a cross tile with probability $p$ and with a bump tile with probability $1-p$).  
Let $\widetilde\pp_i$ denote the $i$-th pipe in this random pipe dream $\PD_p(\bT_j)$. The key observation is that the number of integers $j'$ such that $(\widetilde\pp_j,\widetilde\pp_{j'})$ is an inversion of $\PD_p(\bT_j)$ is equal to $\mathcal E(\{j\},{\bf M}\cup{\bf R})$. Indeed, this is because $\pp_j$ must lie entirely within boxes in $\bS^{(n)}\cap\bT_j$ (so in fact, $\pp_j=\widetilde\pp_j$). But $\bT_j$ is a serrated shape satisfying the hypotheses of the shape $\bT^{(n)}$ from \cref{sec:serrated}; indeed, it follows from the construction of $\bT_j$ that $\occ_j(\bT_j)=m>n^{2\varepsilon}$ and 
\[\max_{i\in[n-1]}\occ_i(\bT_j)=\MM_n+O(n). \] Therefore, the desired result follows from \cref{lem:end_serrated}. 
\end{proof}

\begin{figure}[]
  \begin{center}
  \includegraphics[height=12cm]{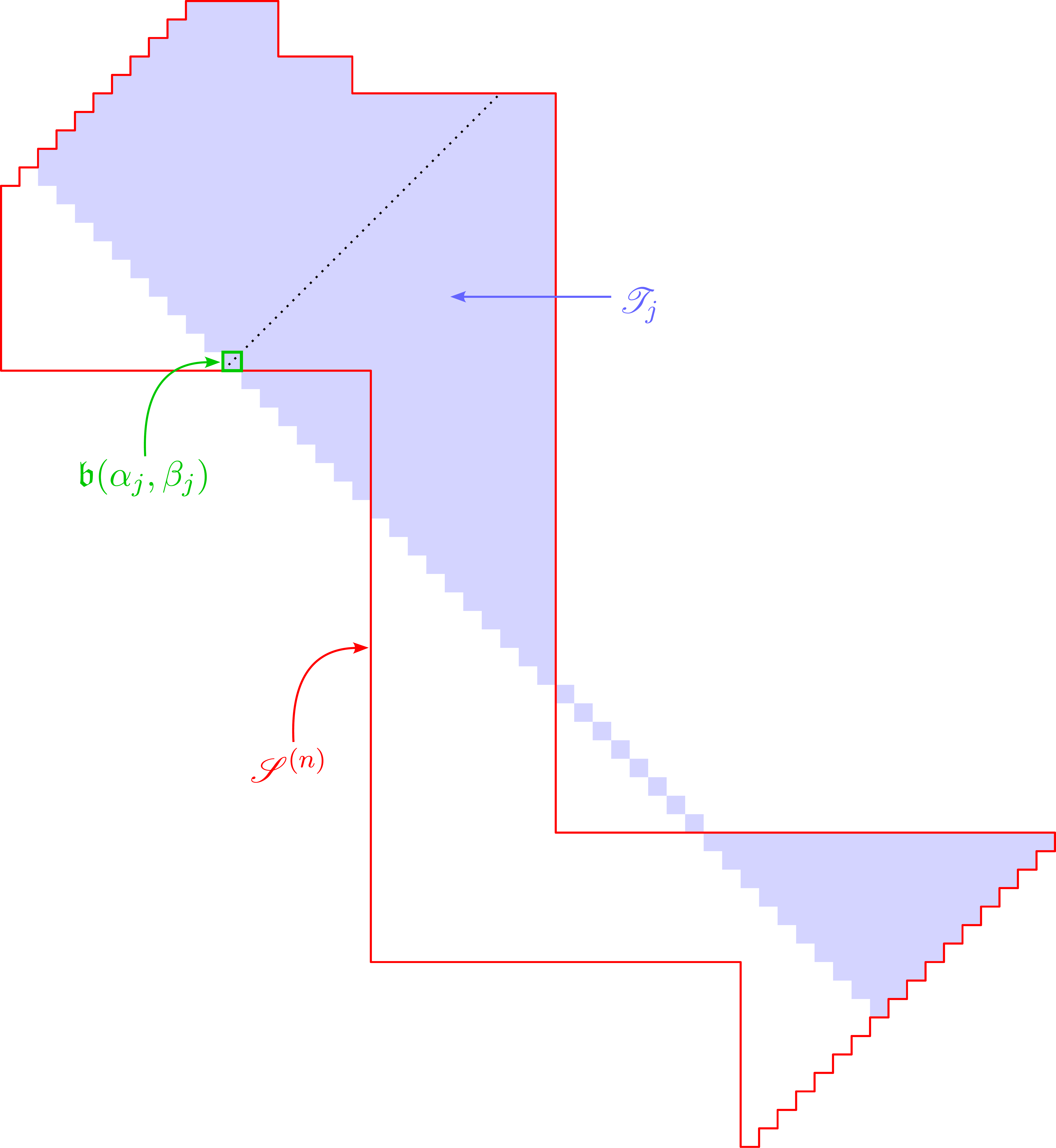}
  \end{center}
\caption{An illustration of the construction of $\bT_j$ from the proof of \cref{lem:Final1}. The shape $\bS^{(n)}$ is outlined in {\color{red}red}. The box $\fb(\alpha_j,\beta_j)$ containing the starting point of $\pp_j$ is outlined in {\color{MildGreen}green}. The shape $\bT_j$ is shaded in {\color{LightBlue}periwinkle}. The number $m=\occ_j(\bS^{(n)})$ counts the boxes whose centers lie on the black dotted line. }\label{fig:serrate} 
\end{figure}

\begin{lemma}\label{lem:short_pipes}
The expected number of inversions of $\PD_p(\bS^{(n)})$ of the form $(\pp_k,\pp_{k'})$ such that $\min\{\occ_k(\bS^{(n)}),\occ_{k'}(\bS^{(n)})\}\leq n^{2\varepsilon}$ is $o(\LL_n)$. 
\end{lemma}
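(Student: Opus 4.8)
The plan is to show that inversions involving a ``short'' pipe (one with at most $n^{2\varepsilon}$ boxes on its diagonal) contribute only $o(\LL_n)$ to the expected number of inversions, by bounding the number of such pipes and bounding how many inversions each one can participate in. First I would use hypothesis \eqref{eq:few_short_pipes}, which says that the set $S=\{i\in[n-1]:\occ_i(\bS^{(n)})\leq n^{2\varepsilon}\}$ has size $o(n^{1-\varepsilon})$. An inversion $(\pp_k,\pp_{k'})$ with $\min\{\occ_k(\bS^{(n)}),\occ_{k'}(\bS^{(n)})\}\leq n^{2\varepsilon}$ has $k\in S$ or $k'\in S$, so it suffices to fix an index $\ell\in S$ and bound the expected number of inversions of the form $(\pp_\ell,\pp_{j})$ or $(\pp_{j},\pp_\ell)$, then multiply by $|S|$ and sum.

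Next I would bound, for a fixed $\ell\in S$, the expected number of pipes $\qq$ with $(\pp_\ell,\qq)$ or $(\qq,\pp_\ell)$ an inversion. The key point is that if $(\pp_\ell,\pp_j)$ (say with $\ell<j$) is an inversion, then $\pp_\ell$ and $\pp_j$ must kiss, which forces $\dd_{\pp_j}(0)$ to come close to $\dd_{\pp_\ell}(0)$; more precisely, one of the two pipes must travel a distance at least $(|d_j-d_\ell| )/2$ away from its starting diagonal (since the other pipe is short and cannot move far). Using the concentration inequality \cref{lem:veer} together with the bound $R_{\qq}\leq 2\MM_n$ (this follows from order-convexity exactly as in \eqref{eq:RO}, since $R_\qq \le 2\occ_{d_\qq}(\bS^{(n)}) \le 2\MM_n$; also first ruling out the rare event that a pipe touches the northwest or southeast boundary, as in \cref{lem:touch_boundary}), I would show that $(\pp_\ell,\pp_j)$ being an inversion has probability $O(n^{-100})$ once $|j-\ell|$ exceeds some threshold $\Theta(\sqrt{\MM_n\log^{1+\varepsilon}\MM_n})$. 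Combined with the trivial bound of probability $\le 1$ for $|j-\ell|$ below the threshold, the expected number of inversions involving $\pp_\ell$ is $O(\sqrt{\MM_n\log^{1+\varepsilon}\MM_n})+O(n\cdot n^{-100})=O(\sqrt{\MM_n\log^{1+\varepsilon}\MM_n})$.

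Finally I would assemble the pieces: the total expected number of inversions involving a short pipe is at most $|S|\cdot O(\sqrt{\MM_n\log^{1+\varepsilon}\MM_n}) = o(n^{1-\varepsilon})\cdot O(\sqrt{\MM_n\log^{1+\varepsilon}\MM_n})$. By \eqref{eq:MM_Bounds} we have $\MM_n=o(n^2/\log^{1+\varepsilon}n)$, so $\sqrt{\MM_n\log^{1+\varepsilon}\MM_n}=o(n)$, giving a bound of $o(n^{1-\varepsilon})\cdot o(n)=o(n^{2-\varepsilon})$. On the other hand, \cref{lem:LL} gives $\LL_n\gg n^{1+\varepsilon}$, and more strongly one checks the displayed bound $o(n^{2-\varepsilon})=o(\LL_n)$: indeed from $\MM_n = o(n^2/\log^{1+\varepsilon} n)$ we get $\sqrt{\MM_n \log^{1+\varepsilon} \MM_n} = o(n / \log^{\varepsilon/2} n)$ say, so the product is $o(n^{2-\varepsilon}/\log^{\varepsilon/2}n)$, while refining the first inequality in \cref{lem:LL} we have $\LL_n \gg n^{1+\varepsilon}$; since $\varepsilon$ is chosen sufficiently small relative to nothing but itself this does not immediately close, so I would instead argue more carefully that $o(n^{1-\varepsilon}) \cdot o(n) = o(n^{2-\varepsilon})$ and compare against $\LL_n \ge \sum_i \sqrt{\occ_i} \ge (n - o(n)) n^{\varepsilon} \gg n^{1+\varepsilon}$ — wait, $n^{2-\varepsilon}$ need not be $o(n^{1+\varepsilon})$. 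The resolution is that the factor from $|S|$ is $o(n^{1-\varepsilon})$ and the per-pipe factor is genuinely $o(n^{\varepsilon})$-smaller than $n$: since $\MM_n = o(n^2/\log^{1+\varepsilon}n)$, in fact for the shapes we care about one has a stronger bound, or one simply notes $|S| \cdot O(\sqrt{\MM_n \log^{1+\varepsilon}\MM_n}) = o(n^{1-\varepsilon}) \cdot O(n) = o(n^{2-\varepsilon})$ and uses that $\LL_n \gg \MM_n^{3/2}$ is not what we need but rather $\LL_n \ge \sum_{i \notin S}\sqrt{\occ_i(\bS^{(n)})} \ge (n - o(n-1))\cdot n^{\varepsilon} \gg n^{1+\varepsilon}$ — so the honest statement is that the hypotheses must be used in the sharper form $|S| = o(n^{1-\varepsilon})$ together with $\sqrt{\MM_n} = o(n^{1-\varepsilon}/\text{something})$; I expect reconciling these exponents is the one genuinely delicate bookkeeping step, and the cleanest fix is to observe $o(n^{1-\varepsilon}) \cdot o(n^{\varepsilon} \cdot n^{1-2\varepsilon}\log^{(1+\varepsilon)/2} n)$-type estimates and invoke $\LL_n\gg n^{1+\varepsilon}$ after verifying $2-\varepsilon \le 1+\varepsilon$ fails, meaning one must instead bound the per-pipe contribution not by $\sqrt{\MM_n}$ uniformly but diagonal-by-diagonal, matching $\sum_{\ell\in S}\sqrt{\occ_\ell(\bS^{(n)})}\le |S|\,n^{\varepsilon} = o(n^{1-\varepsilon})\cdot n^{\varepsilon}=o(n)$ against $\LL_n\gg n^{1+\varepsilon}$; \textbf{that matching is the main obstacle}, and it is handled by noting each short pipe $\pp_\ell$ participates in $O(\sqrt{\MM_n\log^{1+\varepsilon}\MM_n})$ inversions but this can be replaced, for the sum over $\ell\in S$, by comparison with the already-established \cref{lem:bulk} estimate for neighboring non-short diagonals.
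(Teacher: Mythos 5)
There is a genuine gap here, and you have correctly diagnosed where it is but not closed it. Your per-pipe bound — that a fixed short pipe $\pp_\ell$ participates in $O(\sqrt{\MM_n\log^{1+\varepsilon}\MM_n})$ inversions in expectation — is too weak: multiplying by $|S|=o(n^{1-\varepsilon})$ gives only $o(n^{2-\varepsilon})$, whereas \cref{lem:LL} supplies only $\LL_n\gg n^{1+\varepsilon}$, and $2-\varepsilon>1+\varepsilon$ for small $\varepsilon$. The closing passage of your write-up (the appeal to "comparison with \cref{lem:bulk} for neighboring non-short diagonals") is not an argument; \cref{lem:bulk} estimates inversions between pipes starting on long diagonals and says nothing about inversions involving a short pipe. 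So the exponent mismatch you flagged as "the main obstacle" remains unresolved.

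The missing idea is to exploit the shortness of $\pp_\ell$ itself rather than the global bound $\MM_n$. Two observations do it. First, deterministically, the number of inversions involving $\pp_\ell$ is at most $R_{\pp_\ell}+1$: any pipe crossing $\pp_\ell$ must do so in one of the $R_{\pp_\ell}+1$ boxes that $\pp_\ell$ occupies, and distinct crossing pipes use distinct boxes. Second, $R_{\pp_\ell}$ is small with high probability precisely because $\occ_\ell(\bS^{(n)})\leq n^{2\varepsilon}$: by order-convexity the pipe travels at most $n^{2\varepsilon}$ steps in one of the two directions, so $R_{\pp_\ell}>2n^{2\varepsilon}$ forces $|\dd_{\pp_\ell}(R_{\pp_\ell})-\dd_{\pp_\ell}(0)|\geq R_{\pp_\ell}-2n^{2\varepsilon}$, and summing the tail bound from \cref{lem:veer} over $a>4n^{2\varepsilon}$ gives $\mathbb P(R_{\pp_\ell}>4n^{2\varepsilon})=O(n^{-100})$. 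Hence the expected number of inversions involving $\pp_\ell$ is at most $(4n^{2\varepsilon}+1)+n\cdot O(n^{-100})=(4+o(1))n^{2\varepsilon}$, and the total over $\ell\in S$ is $o(n^{1-\varepsilon})\cdot O(n^{2\varepsilon})=o(n^{1+\varepsilon})=o(\LL_n)$ by \eqref{eq:few_short_pipes} and \cref{lem:LL}. Note this argument never needs the kissing/first-kiss machinery or the threshold on $|j-\ell|$ that your proposal builds around.
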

\begin{proof}
Suppose $j\in[n-1]$ is such that $\occ_j(\bS^{(n)})\leq n^{2\varepsilon}$. The pipe $\pp_j$ either travels at most $n^{2\varepsilon}$ steps north or travels at most $n^{2\varepsilon}$ steps east. Therefore, if $R_\pp>2n^{2\varepsilon}$, then $|\dd_{\pp_j}(R_{\pp_j})-\dd_{\pp_j}(0)|\geq R_{\pp_j}-2n^{2\varepsilon}$. Hence, for each integer $a>4n^{2\varepsilon}$, it follows from \cref{lem:veer} that 
\begin{align*}
\mathbb P(R_{\pp_j}=a) &\leq C_1\exp\left(-C_2\frac{(a-2n^{2\varepsilon})^2}{2a-2n^{2\varepsilon}}\right) \\ &\leq C_1\exp\left(-C_2\frac{(a/2)^2}{2a}\right)\\ &= C_1\exp\left(-C_2\,a/8\right).
\end{align*}
Consequently, \[\mathbb P(R_{\pp_j}>4n^{2\varepsilon})=\sum_{a>4n^{2\varepsilon}}\mathbb P(R_{\pp_j}=a)=O(n^{-100}).\]
The number of inversions involving $\pp_j$ is at most $R_\pp+1$. Therefore, the expected number of inversions involving $\pp_j$ is at most 
\[(4n^{2\varepsilon}+1)\mathbb P(R_{\pp_j}\leq 4n^{2\varepsilon})+n\,\mathbb P(R_{\pp_j}>4n^{2\varepsilon})=(4+o(1))n^{2\varepsilon}.\] 

The preceding argument implies that the expected number of inversions of $\PD_p(\bS^{(n)})$ of the form $(\pp_k,\pp_{k'})$ such that $\min\{\occ_k(\bS^{(n)}),\occ_{k'}(\bS^{(n)})\}\leq n^{2\varepsilon}$ is at most 
\[(4+o(1))n^{2\varepsilon}\cdot|\{i\in[n-1]:\occ_i(\bS^{(n)})\leq n^{2\varepsilon}\}|.\] Hence, the desired result follows from \eqref{eq:few_short_pipes} and \cref{lem:LL}. 
\end{proof}

It is immediate from \cref{lem:LL,lem:Final1,lem:bulk,lem:short_pipes} that 
\begin{equation}\label{eq:A1}
\mathcal E({\bf M},{\bf M})=(1+o(1))\LL_n\quad\text{and}\quad\mathcal E({\bf M},{\bf R})=o(\LL_n).
\end{equation}  
Let $\mathrm{ref}(\PD_p(\bS^{(n)}))$ denote the pipe dream obtained by reflecting $\PD_p(\bS^{(n)})$ through the line ${\{(x,y)\in\mathbb R^2:x-y=n/2\}}$. Applying the same argument used to show that $\mathcal E({\bf M},{\bf R})=o(\LL_n)$ to the pipe dream $\mathrm{ref}(\PD_p(\bS^{(n)}))$ instead of $\PD_p(\bS^{(n)})$, we deduce that 
\begin{equation}\label{eq:A2}
\mathcal E({\bf L},{\bf M})=o(\LL_n). 
\end{equation}

Because $|{\bf L}|=|{\bf R}|=\left\lfloor 5\sqrt{\MM_n\log^{1+\varepsilon} \MM_n}\right\rfloor$, it follows from \cref{lem:LL} that 
\begin{equation}\label{eq:A3}
\mathcal E({\bf L},{\bf R})\leq 25\MM_n\log^{1+\varepsilon} \MM_n=o(\LL_n).
\end{equation}
By the same argument, we have 
\begin{equation}\label{eq:A4}
\mathcal E({\bf L},{\bf L})=o(\LL_n)\quad\text{and}\quad\mathcal E({\bf R},{\bf R})=o(\LL_n). 
\end{equation}

Combining \eqref{eq:LMR}, \eqref{eq:A1}, \eqref{eq:A2}, \eqref{eq:A3}, and \eqref{eq:A4}, we deduce that $\mathcal E([n-1],[n-1])$, which is the expected number of inversions of $\PD_p(\bS^{(n)})$ is
$(1+o(1))\LL_n$. This completes the proof of \cref{thm:main}. 

\section{Future Directions}\label{sec:conclusion}  

\subsection{Variance}
\cref{thm:main} concerns the expected value of $\inv(v^{(n)})$, where $v^{(n)}$ is the permutation represented by the random subword $\sub_p(\mathsf{w}^{(n)})$. It would be interesting to say something about the variance of $\inv(v^{(n)})$. 

\subsection{Arbitrary Reduced Words}
One very natural setting in which to apply \cref{thm:main} is when $\mathsf{w}^{(n)}$ is a reduced word for the decreasing permutation in $\SSS_n$. Note, however, that the theorem only applies if $\mathsf{w}^{(n)}$ is alternating. It would be interesting to extend \cref{thm:main} to handle the case when $\mathsf{w}^{(n)}$ is an arbitrary reduced word for the decreasing permutation. 

\subsection{Large Shapes} 
For each $n\geq 2$, let $\mathsf{w}^{(n)}$ be an alternating word over $\{s_1,\ldots,s_{n-1}\}$, and let $v^{(n)}$ be the permutation represented by $\sub_p(\ww^{(n)})$. One hypothesis of \cref{thm:main} requires that \[\max_{i\in[n-1]}\occ_i(\mathsf{w}^{(n)})=o(n^2/\log^{1+\varepsilon} n).\] It is possible that the conclusion of the theorem could still hold if this hypothesis is slightly weakened. However, it cannot be weakened too much. Indeed, if $\max_{i\in[n-1]}\occ_i(\mathsf{w}^{(n)})$ grows too quickly, then the conclusion of \cref{thm:main} will necessarily fail on the grounds that a permutation in $\SSS_n$ cannot have more than $\binom{n}{2}$ inversions. It would be fascinating to understand the asymptotics of $\mathbb E(\inv(v^{(n)}))$ in this regime where $\max_{i\in[n-1]}\occ_i(\ww^{(n)})$ grows quickly. 

For a concrete example, fix a constant $\Lambda>\frac{\pi}{8}\cdot\frac{1-p}{p}$. For each $n\geq 2$, let $\varrho_n$ be an integer such that $\varrho_n\geq\Lambda n^2$, and let $v^{(n)}$ be the random permutation generated as in \cref{exam:bipartite2} (so $\mathsf{w}^{(n)}=(\mathsf{x}_n\mathsf{y}_n)^{\varrho_n}$). It would be interesting to understand $\mathbb E(\inv(v^{(n)}))$ since the sequence $(\varrho_n)_{n\geq 2}$ grows so quickly that the asymptotic formula in \cref{exam:bipartite2} cannot hold.

\subsection{Other Permutation Patterns} 
Consider permutations $u\in\SSS_k$ and $w\in\SSS_n$. We call $u$ a \dfn{pattern}. An \dfn{occurrence} of the pattern $u$ in the permutation $w$ is a tuple $(i_1,\ldots,i_k)$ such that ${1\leq i_1<\cdots<i_k\leq n}$ and such that for all $j,j'\in[k]$, we have $w(i_j)<w(i_{j'})$ if and only if $u(j)<u(j')$. Note that $\inv(w)$ is the number of occurrences of the pattern $21$ in $w$. It would be very interesting to understand (asymptotically) the expected number of occurrences of other patterns in the permutation $v^{(n)}$ from \cref{thm:main}. 
In a similar vein, we can ask for the expected length of the longest decreasing subsequence in $v^{(n)}$. See \cite{BDJ,Bassino,Borga, EvenZohar,Kenyon,Romik} for similar questions about pattern occurrences and decreasing (or increasing) subsequences in random permutations with other distributions.    

\subsection{Other Types}
The symmetric group $\SSS_n$ is an example of a Coxeter group, and $\{s_1,\ldots,s_{n-1}\}$ is its set of simple reflections (see \cite{BjornerBrenti}). The model of random subwords that we have considered here has an obvious generalization to arbitrary Coxeter groups. It seems fruitful to consider this generalization, perhaps for infinite Coxeter groups. 

Let $\widetilde\SSS_n$ denote the affine symmetric group, and let $\widetilde s_0,\widetilde s_1,\ldots,\widetilde s_{n-1}$ be the simple reflections of $\widetilde\SSS_n$ listed in clockwise order around the Coxeter graph (which is a cycle). Let $\widetilde{\mathsf{c}}$ be the word $\widetilde s_0\widetilde s_1\cdots\widetilde s_{n-1}$. Both the current article and the recent article \cite{MPPY} by Morales, Panova, Petrov, and Yeliussizov study the element of $\SSS_n$ represented by the (usual) product of a random subword of the staircase reduced word $(s_{n-1})(s_{n-2}s_{n-1})\cdots(s_1s_2\cdots s_{n-1})$. 
However, Morales, Panova, Petrov, and Yeliussizov also studied the permutation represented by the Demazure product of this random subword, which exhibits drastically different behavior (for example, the number of inversions is on the order of $n^2$ instead of $n^{3/2}$). In the context of combinatorial billiards, the current author has recently studied the element of $\widetilde\SSS_n$ represented by the Demazure product of a random subword of $\widetilde{\mathsf{c}}^k$ (for large $k$) \cite{DefantStoned}. There is thus a natural gap in the literature: it seems that no one has investigated the element of $\widetilde\SSS_n$ represented by the (usual) product of a random subword of $\widetilde{\mathsf{c}}^k$. The current author plans to (at least partially) fill this gap in the future. 

\section*{Acknowledgments}
The author was supported by the National Science Foundation under Award No.\ 2201907 and by a Benjamin Peirce Fellowship at Harvard University. He is very grateful to Jonas Iskander, Noah Kravitz, Matthew Nicoletti, Greta Panova, and Alan Yan for helpful conversations.

\end{document}